\newtheorem{theorem}[subsubsection]{Theorem}
\newtheorem{expected theorem}[subsection]{Expected Theorem}
\newtheorem{thm}[subsubsection]{Theorem}
\newtheorem{lemma}[subsubsection]{Lemma}
\newtheorem{lem}[subsubsection]{Lemma}
\newtheorem{thm*}{Theorem}%
\newcommand\csname thethm*default\endcsname{\thethm*}
\newcommand{\thmstarnum}[1]{\expandafter\gdef\csname thethm*\endcsname{#1*}}
\newtheorem{cor}[subsubsection]{Corollary}
\newtheorem{conj}[subsubsection]{Conjecture}
\newtheorem{prop}[subsubsection]{Proposition}
\newtheorem{defn}[subsubsection]{Definition}
\theoremstyle{remark}
\newtheorem{remark}[subsubsection]{Remark}
\newtheorem{rem}[subsubsection]{Remark}
\newtheorem{exercise}[subsubsection]{Exercise}
\numberwithin{equation}{subsection}
			\newif\iffinalrun
			\newcommand{\need}[1]{}
			\newcommand{\mar}[1]{}
			\newcommand{\need}[1]{{\tiny *** #1}}
			\newcommand{\mar}[1]{\marginpar{\raggedright\tiny #1}}
			\newcommand{\A}{\AA}
			\newcommand{\C}{\CC}
			\newcommand{\F}{\FF}
			\newcommand{\N}{\mathcal{N}}
			\newcommand{\Q}{\QQ}
			\newcommand{\R}{\RR}
			\newcommand{\Z}{\ZZ}
			\renewcommand{\O}{\cO}
			\newcommand{\m}{\frakm}
			\newcommand{\sm}{\mathrm{sm}}
			\renewcommand{\AA}{{\mathbb A}}
			\newcommand{\CC}{{\mathbb C}}
			\newcommand{\FF}{{\mathbb F}}
			\newcommand{\GG}{{\mathbb G}}
			\newcommand{\QQ}{{\mathbb Q}}
			\newcommand{\RR}{{\mathbb R}}
			\newcommand{\TT}{{\mathbb T}}
			\newcommand{\ZZ}{{\mathbb Z}}
			\renewcommand{\bf}{\ensuremath{\mathbf{f}}}
			\newcommand{\cA}{{\mathcal A}}
			\newcommand{\cC}{{\mathcal C}}
			\newcommand{\cE}{{\mathcal E}}
			\newcommand{\cH}{{\mathcal H}}
			\newcommand{\cL}{{\mathcal L}}
			\newcommand{\cN}{{\mathcal N}}
			\newcommand{\cO}{{\mathcal O}}
			\newcommand{\cU}{{\mathcal U}}
			\newcommand{\cV}{{\mathcal V}}
			\newcommand{\cW}{{\mathcal W}}
			\newcommand{\cX}{{\mathcal X}}
			\newcommand{\cY}{{\mathcal Y}}
			\newcommand{\cZ}{{\mathcal Z}}
			\newcommand{\frakm}{\mathfrak{m}}
			\newcommand{\frakX}{\mathfrak{X}}
			\newcommand{\Fbar}{\overline{\F}}
			\newcommand{\Qbar}{\overline{\Q}}
			\newcommand{\Fp}{\F_p}
			\newcommand{\Fpbar}{\Fbar_p}
			\newcommand{\Zl}{\Z_{\ell}}
			\newcommand{\Zp}{\Z_p}
			\newcommand{\Zpx}{\Zp^{\times}}
			\newcommand{\Ql}{\Q_{\ell}}
			\newcommand{\Qp}{\Q_p}
			\newcommand{\Qpbar}{\Qbar_p}
			\newcommand{\Qpx}{\Qp^{\times}}
			\DeclareMathOperator{\End}{End}
			\DeclareMathOperator{\Gal}{Gal}
			\DeclareMathOperator{\GL}{GL}
			\DeclareMathOperator{\Hom}{Hom}
			\DeclareMathOperator{\im}{im}
			\DeclareMathOperator{\Ind}{Ind}
			\DeclareMathOperator{\Max}{Max}
			\DeclareMathOperator{\SL}{SL}
			\DeclareMathOperator{\SO}{SO}
			\DeclareMathOperator{\Spec}{Spec}
			\DeclareMathOperator{\Spf}{Spf}
			\DeclareMathOperator{\Spa}{Spa}
			\DeclareMathOperator{\Sym}{Sym}
			\newcommand{\Frob}{\mathrm{Frob}}
			\newcommand{\et}{\mathrm{\acute{e}t}}
			\newcommand{\lb}{[\![}
			\newcommand{\rb}{]\!]}
			\newcommand{\JL}{\operatorname{JL}}
			\newcommand{\Gm}{\GG_m}
			\newcommand{\ord}{\mathrm{ord}}
			\newcommand{\ad}{\mathrm{ad}}
			\newcommand{\Iw}{\mathrm{Iw}}
			\newcommand{\id}{\mathrm{id}}
\begin{document}

\title{Construction of eigenvarieties}
\author{James Newton}
\email{newton@maths.ox.ac.uk}
\address{Mathematical Institute, University of Oxford, Woodstock Road, Oxford OX2 6GG, UK}

\begin{abstract}
These are notes based on four lectures given at the Heidelberg spring school on non-archimedean geometry and eigenvarieties. None of the
contents are original work. Our goal is to explain the construction of eigenvarieties in various different contexts, including the prototypical example of the Coleman--Mazur eigencurve. We will also discuss some of the common geometric properties of eigenvarieties.
\end{abstract}
\maketitle
\tableofcontents

\section*{Acknowledgements}
We thank the participants of the Heidelberg spring school for their interest and questions during the lectures, along with the organizers of the school. Thanks to Eugen Hellmann, Christian Johansson and Judith Ludwig for helpful discussions before, during and after the school. Thanks to an anonymous referee, H{\aa}vard Damm-Johnsen and Zachary Feng for their comments on an earlier version of these notes. Thanks also to H{\aa}vard for making the image in section \ref{sec:mc}.

\section*{Funding}
For the purpose of Open Access, the author has applied a CC BY public copyright licence to any Author Accepted Manuscript (AAM) version arising from this submission. The author was supported by a UKRI Future Leaders Fellowship, grant MR/V021931/1. 

Final editing of these notes was completed whilst participating in the Hausdorff Institute trimester, `The Arithmetic of the Langlands Program', funded by the Deutsche Forschungsgemeinschaft (DFG, German Research Foundation) under Germany's Excellence Strategy -- EXC-2047/1 -- 390685813.

\section{Introduction}
The goal of these lectures is to explain some examples of constructions of eigenvarieties, together with some of the properties of these spaces. We will apply results from Ludwig's lectures \cite{jl-heidelberg}, which develop crucial ingredients from $p$-adic analysis and geometry.

The main focus of the notes will be on two key examples. In Section \ref{sec:ocmfs} we will discuss the most classical setting of overconvergent modular forms and the Coleman--Mazur eigencurve. In Section \ref{sec:defquat} we will discuss overconvergent automorphic forms for definite quaternion algebras (over $\Q$) and their associated eigenvarieties. We include two further short sections with complementary material. In Section \ref{sec:overview} we give a brief overview of different constructions of eigenvarieties appearing in the literature. In Section \ref{sec:occoh} we sketch the theory of overconvergent cohomology which generalizes the constructions of Section \ref{sec:defquat} --- we hope that providing this general context will illuminate the earlier material. 

\section{Overconvergent modular forms and the Coleman--Mazur eigencurve}\label{sec:ocmfs}
We'll begin by sketching some motivation for Coleman's theory of $p$-adic families of overconvergent modular forms. After that, we'll briefly describe the construction of the Coleman--Mazur eigencurve. These notes contain more details than the original lectures, but are still quite sketchy and the reader should refer to the original literature for precise details. In this section, we will make serious use of the geometry of modular curves. This will disappear in section \ref{sec:defquat}, where we'll talk about the (in many ways simpler) case of overconvergent automorphic forms for definite quaternion algebras.

\subsection{Local constancy of slopes}
The first main topic we'll discuss is Banach spaces of overconvergent modular forms. The Coleman--Mazur eigencurve is built using these spaces and the spectral theory of the compact Hecke operator, $U_p$. 

Fix a positive integer $N$ and a prime $p \nmid N$. For each integer $k$, we can consider the $\CC$-vector space of modular forms $M_{k,N} := M_k(\Gamma_1(N)\cap\Gamma_0(p))$ of level $\Gamma_1(N)\cap\Gamma_0(p)$ and weight $k$. Recall that $\Gamma_1(N)$ is the subgroup of $\SL_2(\Z)$ given by matrices which are strictly upper triangular mod $N$, whilst $\Gamma_0(p)$ is given by matrices which are upper triangular mod $p$. 

These spaces of modular forms come equipped with the Hecke operator $U_p$ which acts on $q$-expansions by 
\[U_p\left(\sum_n a_n q^n\right) = \sum_n a_{np}q^n.\]

Let $P_{k,N}(X)$ be the characteristic polynomial of $U_p$ on $M_{k,N}$. The polynomial $P_{k,N}$ actually lies in $\ZZ[X]$ --- we can see from the action on $q$-expansions that it preserves the $\Z$-lattice of modular forms with integral $q$-expansion. 

For convenience, we fix an algebraic closure $\Qpbar$ of $\Qp$ with additive valuation $v$ extending the $p$-adic valuation on $\Qp$. 

\begin{defn}
	For $h \in \Q_{\ge 0}$, let $d(k,h)$ denote the number of roots $\alpha$ of $P_{k,N}$ (counting multiplicities) with $v(\alpha) \le h$.  
\end{defn} 

The valuations of $U_p$ eigenvalues are traditionally called the \emph{slopes} of modular forms (since they are the slopes of the Newton polygon of $P_{k,N}$). The integer $d(k,h)$ is the degree of the slope $\le h$ factor of $P_{k,N}$.

\begin{theorem}[Hida, Coleman, Wan]\label{thm:locallyconstantslopes}
	Let $h \in \Q_{\ge 0}$. Let $k_1, k_2$ be two integers strictly bigger than $h+1$. There exists an integer $M_h$ (possibly depending on $p, N, h$, but not on the weights $k_i$) such that $k_1 \equiv k_2 \pmod{p^{M_h}(p-1)}$ implies $d(k_1,h) = d(k_2,h)$. 
\end{theorem}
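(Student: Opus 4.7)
The plan is to reduce the statement to the $p$-adic analytic behaviour of the Fredholm determinant of $U_p$ acting on a family of Banach spaces of overconvergent modular forms, and then to conclude from the continuity in families of the Coleman--Serre slope decomposition of a compact operator.

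First I would enlarge the classical space $M_{k,N}$ to a $p$-adic Banach space $M_{k,N}^\dagger$ of overconvergent modular forms of tame level $N$ and weight $k$, on which $U_p$ acts as a compact (completely continuous) operator. Coleman's classicality theorem asserts that for $k > h+1$ the inclusion $M_{k,N} \hookrightarrow M_{k,N}^\dagger$ is an isomorphism on the slope-$\le h$ subspaces for $U_p$. Consequently $d(k,h)$ also equals the degree of the slope-$\le h$ factor of the characteristic (Fredholm) series of $U_p$ on $M_{k,N}^\dagger$, and it suffices to prove local constancy of this overconvergent quantity \emph{in the weight}, with no restriction $k > h+1$.

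Next I would interpolate over the rigid analytic weight space $\cW = \Hom_{\mathrm{cts}}(\Zp^\times, \Gm^{\mathrm{rig}})$, in which integer weights $k$ correspond to the characters $z \mapsto z^k$. Two such characters lie in the same closed disc of radius $|p|^M$ in $\cW$ if and only if $k_1 \equiv k_2 \pmod{(p-1)p^M}$; so a result of local constancy on affinoid subdomains of $\cW$ will translate directly into the congruence condition in the statement. Over any affinoid $W \subset \cW$ one constructs (Coleman, and subsequently Stevens, Buzzard, Pilloni, \ldots) a projective Banach $\cO(W)$-module $M_W^\dagger$ equipped with a compact $\cO(W)$-linear operator $U_p$ which specialises fibrewise to $(M_{k,N}^\dagger, U_p)$ at each integer weight $k \in W$. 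The Fredholm determinant $F_W(T) = \det(1 - T U_p \mid M_W^\dagger) \in \cO(W)\{\!\{T\}\!\}$ therefore specialises to the characteristic series of $U_p$ on $M_{k,N}^\dagger$.

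The crucial input is then the Riesz--Serre--Coleman theory of compact operators in families: for any $h \in \Q_{\ge 0}$, after possibly shrinking $W$, the series $F_W$ admits a slope-$\le h$ factorisation $F_W = Q_h \cdot R_h$, with $Q_h \in \cO(W)[T]$ of unit leading coefficient and reciprocal roots of valuation $\le h$, $R_h$ a unit on the region $v(T) \ge -h$, and with $\deg Q_h$ constant on connected components of $W$. To finish, I would choose $W$ to be a sufficiently small affinoid disc centred at weight $k_1$ over which this factorisation exists with constant degree; the radius needed depends only on $p$, $N$, $h$, and it translates into a congruence of the shape $k_1 \equiv k_2 \pmod{(p-1)p^{M_h}}$, giving the desired integer $M_h$.

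The main obstacle is not the final deduction but its inputs: the construction of the family $M_W^\dagger$ with $U_p$ acting compactly and correctly specialising at classical weights, Coleman's control theorem comparing $M_{k,N}$ and $M_{k,N}^\dagger$ in slope $\le h$ for $k > h+1$, and the existence in families of the slope-$\le h$ factorisation of a Fredholm series with locally constant degree. Note also that the statement here is purely qualitative: making $M_h$ effective, as in the Gouv\^ea--Mazur slope conjectures, is substantially more delicate and lies beyond this argument.
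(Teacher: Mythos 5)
Your proposal follows essentially the same route as the paper's sketch in \S\ref{ssec:locconstslope}: embed the classical spaces into Banach spaces of overconvergent forms with a compact $U_p$, interpolate these in a projective Banach $\cO(W)$-module over an affinoid $W$ in weight space, apply the local slope-$\le h$ factorisation of the Fredholm series to get a finite projective slope-$\le h$ summand whose rank is locally constant, and finally invoke Coleman's classicality theorem (valid exactly when $h < k-1$, i.e.\ $k > h+1$) to transfer this constancy back to the classical spaces. Your translation of the affinoid neighbourhood into a congruence $k_1 \equiv k_2 \pmod{(p-1)p^{M_h}}$ via the coordinate $t = (1+p)^{k}-1$ on a component of $\cW$ is also what the paper has in mind.

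One point worth flagging, since you assert it without argument: you claim ``the radius needed depends only on $p$, $N$, $h$,'' but the slope factorisation a priori gives a neighbourhood of $[k_1]$ whose size could depend on $k_1$, and weight space is not quasi-compact. The paper's own sketch has the same implicit gap. A clean way to close it is to observe that all integral weights $[k]$ lie in the \emph{closed} subdisc $|t| \le |p|$ of a component $\cW_\chi$ (since $(1+p)^{k}-1 \in p\Zp$), which \emph{is} an affinoid; over this affinoid one can produce for the given $h$ a finite admissible cover on each piece of which the slope-$\le h$ factorisation exists, and quasi-compactness then yields a single radius working for every $[k_1]$. With that addendum your argument is complete and matches the paper's.
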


\begin{itemize}
	\item Hida proved this for $h = 0$ --- in that case you can even take $M_h = 0$ ($p$ odd) or $M_h = 1$ ($p$ even).
	
	\item Wan showed that $M_h$ can be taken of order $O(h^2)$ (the implicit constants could depend on $p, N$).
	
	\item Before Coleman proved his result, Gouv\^{e}a and Mazur conjectured that this ``local constancy of slopes'' should hold with $M_h = \lceil h \rceil$ when the weights satisfy $k_i \ge 2h+2$. A counter-example to this was found by Buzzard and Calegari: $N = 1$, $p = 59$, $d(16, h) \ne d(3438,h)$ for some $0 < h \le 1$. Until recently, the slopes of modular forms were still quite mysterious --- however, conjectures due to Bergdall and Pollack \cite{BP1,BP2,BP3} and progress towards proving them in work of Liu, Truong, Xiao, and Zhao \cite{liu2023slopes} represent a big step forwards.
\end{itemize}

\subsection{Overconvergent modular forms}

\subsubsection{Modular curves}\label{sec:mc}Fix an integer $N \ge 5$ and a prime $p \nmid N$. We're going to consider two modular curves, with levels the congruence subgroups $\Gamma_1(N)$ and $\Gamma_1(N)\cap\Gamma_0(p)$ of $\SL_2(\Z)$. 

So, we have compact Riemann surfaces $X_\C := X(\Gamma_1(N)\cap\Gamma_0(p))$ and $Y_\C := X(\Gamma_1(N))$ containing the open modular curves $(\Gamma_1(N)\cap\Gamma_0(p))\backslash\cH$ and $\Gamma_1(N)\backslash\cH$. Modular forms of weight $k \in \Z$ and level $\Gamma$ can be identified with the space of sections $H^0(X(\Gamma),\omega^k)$ of the $k$th power of the modular line bundle $\omega$.

To investigate $p$-adic properties of modular forms, we'll need to look at an algebraic model of the Riemann surfaces $X_\C, Y_\C$. We have proper flat curves $X_{\Z[1/N]}$, $Y_{\Z[1/N]}$ over $\Spec(\Z[1/N])$ (cf.~Deligne--Rapoport \cite{del-rap}) providing integral models of $X_\C$ and $Y_\C$, with $R$-points (for a $\Z[1/N]$-algebra $R$) defined by:
\begin{align*}X_{\Z[1/N]}(R) &= \{\text{isomorphism classes of triples } (E/\Spec(R),H,\eta_N)\}\\
	Y_{\Z[1/N]}(R) &= \{\text{isomorphism classes of pairs } (E/\Spec(R),\eta_N)\}	
\end{align*} where $E/\Spec(R)$ is a (generalized\footnote{cf.~\cite{del-rap} or Conrad's article \cite{conrad-moduli} for details; note that a generalized elliptic curve might be singular, but it has a group law on its smooth locus, including an identity section $e: \Spec(R) \to E^{\sm}$. Our curve has level $\Gamma_1(N;p)$ in Conrad's notation.}) elliptic curve, $H \subset E[p]$ is a finite locally free rank $p$ subgroup scheme\footnote{When $E$ is only a generalized elliptic curve, we should add the condition that $H$ meets every irreducible component in every geometric fibre of $E$.}, and $\eta_N: \Z/N\Z\hookrightarrow E[N]$ is given by a point of order exactly $N$. The level structure $\eta_N$ will usually be easy to keep track of in everything that follows, so we will in general suppress it from our notation.

The spaces of sections $H^0(Y_\Q,\omega^{k})$, $H^0(X_\Q,\omega^k)$ give us $\Q$-vector spaces whose extension of scalars to $\C$ can be identified with spaces of modular forms of level $\Gamma_1(N)$ and $\Gamma_1(N)\cap\Gamma_0(p)$ respectively. This is the starting point for the algebro-geometric theory of modular forms. 

Since we are interested in $p$-adic properties of modular forms, we will extend scalars to $\Z_p$ to define $X:= (X_{\Z[1/N]})_{\Zp}$, $Y:= (Y_{\Z[1/N]})_{\Zp}$. The modular line bundle $\omega$ naturally extends to these integral models --- it is given by $\omega := e^*\Omega^1_{(E^{\mathrm{univ}})^{\sm}/X}$, with $e$ the identity section. So away from the cusps it's the sheaf of invariant differentials for the universal elliptic curve. The curve $Y$ with level prime to $p$ is smooth over $\Zp$.

\newpage
The (geometric) special fibres $X_{\Fpbar} \to Y_{\Fpbar}$ are sketched in  the following classic diagram:
\begin{figure}[h]\label{figure}
\includegraphics[width=0.8\textwidth]{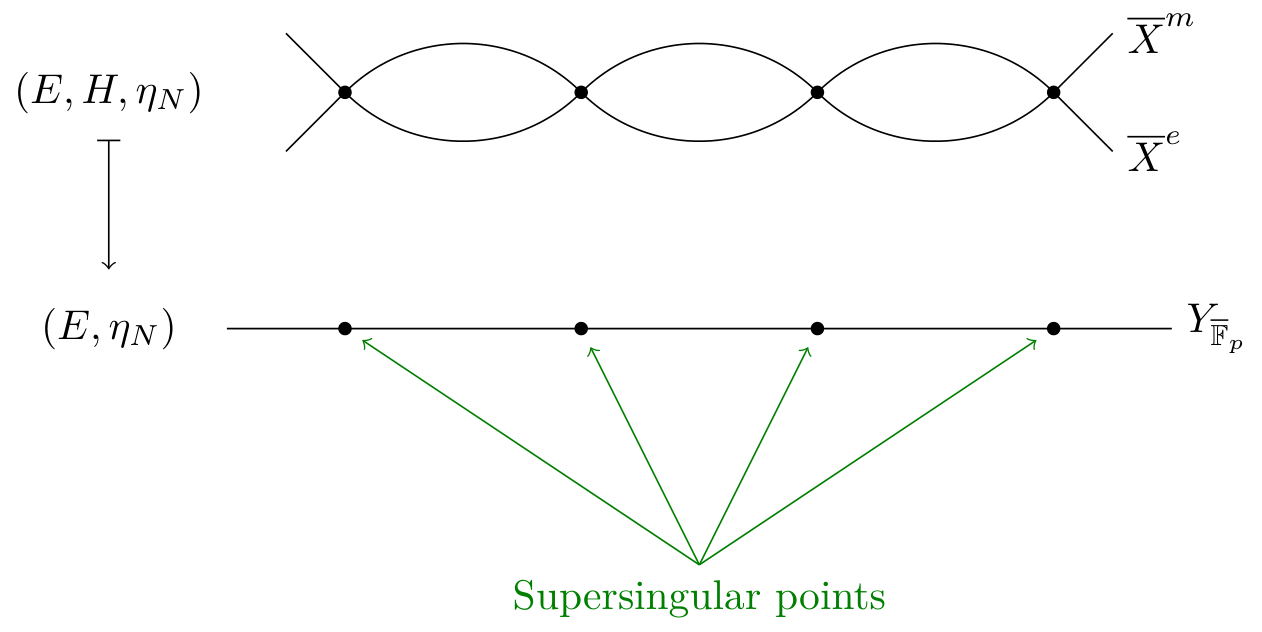}
\end{figure}

There are two open subschemes ${X}^m$, $X^e$ of $X_{\Fpbar}$ corresponding to the locus where $H$ is (\'{e}tale-locally) isomorphic to $\mu_p$ or $\Z/p\Z$ respectively. The letters $m$ and $e$ stand for multiplicative and \'{e}tale, respectively. The closures  $\overline{X}^m$, $\overline{X}^e$ are the irreducible components of $X_{\Fpbar}$ and their intersections correspond to points where the elliptic curve $E/\Fpbar$ is supersingular. The forgetful map $(E,H) \mapsto E$ induces an isomorphism from the component $\overline{X}^m$ to $Y_{\Fpbar}$, and a purely inseparable map of degree $p$ from the component $\overline{X}^e$ to $Y_{\Fpbar}$. On the other hand, the map $(E,H)\mapsto E/H$ induces an isomorphism from the component $\overline{X}^e$ to $Y_{\Fpbar}$, and a purely inseparable map of degree $p$ from the component $\overline{X}^m$ to $Y_{\Fpbar}$.

\subsubsection{Overconvergent modular forms} Our spaces of overconvergent modular forms of weight $k \in \Z$ (first defined by Katz \cite{katz-padic}) will be sections of the line bundle $\omega^k$ on certain affinoid subspaces of the adic generic fibre $\cX := (X)^{\ad}_{\Qp}$.

Since $X$ is proper over $\Zp$ we can construct its adic generic fibre by first considering the formal completion $\frakX$ of $X$ along its special fibre $X_{\Fp}$, and then $\cX = (\frakX)^{\ad}_{\Qp}$. See \cite[\S1.8]{kh-heidelberg}.

More explicitly, we can cover $\frakX$ by affine formal schemes $\Spf(R)$ (each $R$ is topologically of finite type over $\Spf(\Zp)$, in other words a quotient of an integral Tate algebra $\Zp\langle x_1,\ldots,x_n \rangle$). Then $(\frakX)^{\ad}$ is covered by $\Spa(R,R)$ and  $(\frakX)^{\ad}_{\Qp}$ is covered by $\Spa\left(R\left[1/p\right], {R}\right)$\footnote{In general we might need to take an integral closure of $R$ to get a ring of integral elements, but this is not necessary in our case since $\frakX$ is normal.}. 

There is a natural map of topological spaces $\mathrm{sp}: |\cX| \to |\frakX| = |X_{\Fp}|$, the specialization map. On affinoids, we take a point $x \in \Spa\left(R\left[1/p\right], {R}\right)$ to the open prime ideal $\{r \in R: |r|_x < 1\}\subset R$. We have the same story for $Y$, denoting the formal scheme and adic generic fibre by $\mathfrak{Y}$ and $\cY$.

\begin{exercise}
	Using smoothness of $Y$, show that if $y$ is a closed point of $Y_{\Fp}$, then $\mathrm{sp}^{-1}(y) \subset |\cY|$ is a closed subset which can be identified with the closure of an open unit disc (after extending scalars to $K/\Qp$, so $y$ is defined over the residue field of $K$). 
\end{exercise}

In $Y_{\Fp}$, we have an affine open subset $Y_{\Fp}^{\ord}$ corresponding to \emph{ordinary} elliptic curves (see Exercise \ref{ex:ampleaffine}). The complement is the supersingular locus, $Y_{\Fp}^{\mathrm{ss}}$, which consists of finitely many closed points (their field of definition contains $\FF_{p^2}$). The \emph{ordinary locus} $\cY^{\ord}$ in $\cY$ is the open affinoid $\mathrm{sp}^{-1}(Y_{\Fp}^{\ord})$. For a complete extension $L/\Qp$, $\cY^{\ord}(L) \subset Y(\cO_L)$ is given by the cusps together with elliptic curves $E/\cO_L$ with ordinary reduction.

Over $Y_{\Fp}^{\ord}$, the natural map $X_{\Fp}\to Y_{\Fp}$ (forget the subgroup $H$) has a canonical section, given by taking $H$ to be the connected part $E[p]^\circ$. This extends to a section on formal completions, and hence a map $\mathrm{can}:\cY^{\ord} \to \cX$. This is an isomorphism onto its image, which we denote by $\cX^{m,\ord}$ (over this locus $H=E[p]^\circ$ is \'{e}tale-locally isomorphic to the multiplicative group $\mu_p$).

If $C/\Qp$ is a complete and algebraically closed extension, and $E/\cO_{C}$ is an elliptic curve with ordinary reduction, the map `$\mathrm{can}$' is given by \[\mathrm{can}(E,\eta_N) = (E,H_{\mathrm{can}},\eta_N),\] with $H_{\mathrm{can}}$ the \emph{canonical subgroup} $\ker(E[p](\cO_C) \to E[p](\cO_C/p))$ coming from the $p$-torsion in the formal group $\widehat{E} \cong \widehat{\mathbb{G}}_m$. In particular, we have $H_{\mathrm{can}} \cong \mu_p$.

Overconvergent modular forms will be defined over affinoids $\cY^{\le v}$ (strictly) containing $\cY^{\ord}$. They can be defined using the Hasse invariant. This is a mod $p$ modular form $A \in H^0(Y_{\Fp},\omega^{p-1})$ which vanishes to order $1$ at the supersingular points and is non-vanishing on $Y_{\Fp}^{\ord}$. Its $q$-expansion is $1$.

\begin{exercise}\label{ex:ampleaffine}
	Use the fact that $\omega^{p-1}$ is an ample line bundle on $Y_{\Fp}$ and $Y_{\Fp}^{\ord}$ is the non-vanishing locus of a section to deduce that $Y_{\Fp}^{\ord}$ is affine.
\end{exercise}

We will use $A$ to define a `measure of supersingularity' on points of the generic fibre $\cY$, which will allow us to define the affinoids $\cY^{\le v}$. This seems to be explained slightly differently every time it shows up in the literature! Here is one way to do it. Cover $Y_{\Fp}$ by affine opens $U_i$ with trivialisations $s_i: \cO_{U_i}\cong \omega^{p-1}_{Y_{\Fp}}$ of the line bundle $\omega^{p-1}$. The affines $U_i$ match up with affine formal subschemes $\mathfrak{U}_i \subset \mathfrak{Y}$ and affinoids $\cU_i \subset \cY$. We can choose lifts of each trivialization $s_i$ to $\tilde{s}_i: \cO_{\cU_i} \cong \omega^{p-1}|_{\cU_i}$. We can also lift $A$ to a section $\widetilde{A}_i \in H^0(\mathfrak{U}_i,\omega^{p-1})$ for each $i$, and hence a function $f_i = \tilde{s}_i^{-1}(\widetilde{A}_i) \in \cO^+_{\cY}(\cU_i)$. 

\begin{defn}
	For $v \in [0,1)\cap \Q$, define $\cY^{\le v}$ to be the union of the affinoid subdomains $\cU_i^{\le v} \subset \cU_i$ defined by $|f_i|\ge |p|^v$.  
	
	In short, $\cY^{\le{v}}$ is the locus $|A| \ge |p|^v$, or $v_p(A) \le v$. 
\end{defn}
Take a look at Figure 3 in \cite{calegari-aws} to see a nice illustration of what's going on here. 

\begin{exercise}\label{exo:affinoids}
	Convince yourself of some of the following things:
	\begin{enumerate}
		\item The open subspace $\cY^{\le v} \subset \cY$ is independent of the various choices.
		\item Let $L/\Qp$ be a complete extension and let $x:\Spa(L,\cO_L) \to \cY$ be a point with corresponding elliptic curve $E_x/\cO_L$. The Hasse invariant defines an element $A_x$ in the free rank one $\cO_L/p$-module $(e^*\Omega^1_{E_{x,\cO_L/p}})^{\otimes p-1}$. If $A_x$ is non-zero, it has a well-defined valuation $v_p(A_x) \in [0,1)$ (compare $A_x$ with a basis vector). We have $x \in \cY^{\le v}$ if and only if $v_p(A_x) \le v$. 
		\item $\cY^{\le 0} = \cY^{\ord}$. 
		\item $\cY^{\le v}$ is obtained from $\cY$ by deleting (the closures of) smaller open discs inside each of the supersingular residue discs $\mathrm{sp}^{-1}(y)$ as $y$ runs over closed points of $Y_{\Fp}^{\mathrm{ss}}$ (at least after extending scalars to a field $K/\Qp$ whose residue field contains the field of definition of all the supersingular points).
		\item $\cY^{\le v}$ is affinoid. [One way to see this is to use the preceding item and \cite[Corollary 3.6a]{Coleman-RLC}. Another way is to adapt Exercise \ref{ex:ampleaffine} using a lift of a power of $A$ to a section of a very ample line bundle $\omega^{n(p-1)}$ on $Y$.] 
	\end{enumerate}
\end{exercise}

\begin{defn}
	For $v \in [0,1)\cap\Q$, the space of $v$-overconvergent modular forms of weight $k$ (and level $\Gamma_1(N)$) is defined to be
	\[M_k^{\dagger,v}(N) := H^0(\cY^{\le v},\omega^k).\]
	
	It comes with a natural (injective) map $H^0(Y_{\Qp},\omega^k) = H^0(\cY,\omega^k) \hookrightarrow M_k^{\dagger,v}(N)$ from the finite-dimensional space of classical modular forms, by restricting to $\cY^{\le v}$.
\end{defn}
Each $\Qp$-vector space $M_k^{\dagger,v}(N)$ is a finitely generated module over the affinoid algebra $\cO(\cY^{\le v})$, so we can make it into a Banach module over this ring and hence a $\Qp$-Banach space. In fact, for $v$ sufficiently small (but non-zero), Coleman \cite{Coleman-Banach} used a weight one Eisenstein series of level $\Gamma_1(p)$ to define a trivialisation of $\omega$ on $\cY^{\le v}$. This defines an isomorphism $M_k^{\dagger,v}(N) \cong \cO(\cY^{\le v})$.

We can equip the spaces $M_k^{\dagger,v}(N)$ with the usual Hecke operators $T_l, S_l, l\nmid Np$, and $U_l, l|N$. To describe the Hecke operator $U_p$ (most important for us), it is more natural to work with the modular curve of level $\Gamma_1(N)\cap\Gamma_0(p)$ instead of level $\Gamma_1(N)$.  

\subsubsection{$U_p$ and the canonical subgroup}\label{ssec:Upcan}
Our next task is to equip $M_k^{\dagger,v}(N)$ with a compact Hecke operator, $U_p$. This notation is usually reserved for Hecke operators at primes dividing the level of a modular form, so it is perhaps surprising that we will define an operator $U_p$ on a space of modular forms defined using $Y$ (which has level coprime to $p$). However, we can instead view $M_k^{\dagger,N}$ as a space of sections on an affinoid subspace of $\cX$ (the modular curve of level $\Gamma_1(N)\cap\Gamma_0(p)$). 

Recall that we have already defined a section $\mathrm{can}: \cY^{\ord} \to \cX$ to the natural forgetful map $\cX \to \cY$. 

\begin{theorem}[Katz, Lubin]\label{thm:cangp}
	Suppose $v < \frac{p}{p+1}$. The section $\mathrm{can}: \cY^{\ord} \to \cX$ extends to $\cY^{\le v}$, and gives an isomorphism onto its image which we denote by $\cX^{m,\le v}$. 
	
	On $L$-points this sends $(E,\eta_N)$ to $(E,H_{\mathrm{can}}(E),\eta_N)$, where $H_{\mathrm{can}}(E)$ is, by definition, the \emph{canonical subgroup} of $E[p]$. 
	
	If $C \subset E[p]$ is a rank $p$ locally free subgroup scheme which is \emph{not} the canonical subgroup, and $(E,\eta_N) \in \cY^{\le v}$ for $v < \frac{p}{p+1}$, then $(E/C,\eta_N \bmod C) \in \cY^{\le v/p}$ and $H_{\mathrm{can}}(E/C) = E[p]/C$. 
\end{theorem}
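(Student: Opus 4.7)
The plan is to reduce the theorem to a local analysis of the multiplication-by-$p$ series in the formal group of $E$, and then globalize via Weierstrass preparation. Let $L/\Qp$ be complete and $E/\cO_L$ an elliptic curve with formal parameter $T$ on $\widehat{E}$. The series $[p](T)$ has the shape
\[[p](T) = pT + \cdots + a T^p + \cdots + u T^{p^2} + (\text{higher order terms}),\]
where $v_L(a) = v_p(A(E))$ (this is one of the standard characterizations of the Hasse invariant) and $u \in \cO_L^\times$ (in the supersingular case $[p](T)$ reduces mod $p$ to a unit times $T^{p^2}$; the ordinary case is similar). Setting $v' := v_p(A(E)) \le v$, the Newton polygon of $[p](T)/T$ has vertices at $(0,1)$, $(p-1, v')$ and $(p^2-1, 0)$; its two slopes are distinct precisely when $(1-v')/(p-1) > v'/(p(p-1))$, i.e.\ $v' < p/(p+1)$, which is exactly the theorem's hypothesis.

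In that regime, Weierstrass preparation gives $[p](T)/T = f(T) g(T)$ with $f$ a distinguished polynomial of degree $p-1$ whose $p-1$ roots all have valuation $(1-v')/(p-1)$; these roots together with $0$ form a finite flat subgroup scheme $H_{\mathrm{can}}(E) \subset \widehat{E}[p] \subset E[p]$ of rank $p$. It is characterized intrinsically as the set of $p$-torsion points of $\widehat{E}$ whose formal parameter has ``large'' valuation, so it is functorial in $E$. To globalize, I would perform the construction relative to the universal elliptic curve on a cover of $\mathfrak{Y}^{\le v}$ by affine formal opens: since the coefficients of $[p](T)$ depend analytically on the moduli, the Weierstrass factorization can be performed uniformly and produces a global finite flat subgroup $H_{\mathrm{can}} \subset E^{\mathrm{univ}}[p]$, hence a morphism $\mathrm{can} : \cY^{\le v} \to \cX$ extending the one on $\cY^{\ord}$. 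The forgetful map $\cX \to \cY$ is a left inverse, so $\mathrm{can}$ is a closed immersion and therefore an isomorphism onto its image, which is by definition the locus $\cX^{m,\le v}$ where the chosen subgroup agrees with $H_{\mathrm{can}}$.

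For the last assertion, suppose $C \subset E[p]$ is rank $p$ and locally free with $C \ne H_{\mathrm{can}}(E)$. Geometrically both $C$ and $H_{\mathrm{can}}(E)$ are finite group schemes of rank $p$ and are distinct, so $C \cap H_{\mathrm{can}}(E) = 0$ and the composite
\[H_{\mathrm{can}}(E) \hookrightarrow E[p] \twoheadrightarrow E[p]/C\]
is an isomorphism. Thus $E[p]/C \subset (E/C)[p]$ is a rank-$p$ subgroup which I claim is the canonical subgroup of $E/C$. The quotient isogeny $\varphi : E \to E/C$ has dual $\hat{\varphi}$ with $\ker \hat{\varphi} = \varphi(E[p]) = E[p]/C$, and from $\varphi \circ \hat{\varphi} = [p]_{E/C}$ together with a careful tracking of how the formal parameter transforms under $\varphi$, one computes that the Newton polygon of $[p]_{E/C}(T)/T$ has vertices $(0,1)$, $(p-1, v'/p)$ and $(p^2-1, 0)$. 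Hence $v_p(A(E/C)) = v'/p \le v/p$, so $(E/C,\eta_N \bmod C) \in \cY^{\le v/p}$, and the ``small-valuation'' part of $\widehat{E/C}[p]$ is precisely $E[p]/C = H_{\mathrm{can}}(E/C)$.

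The main obstacle is the globalization in the second paragraph: passing from the pointwise Newton-polygon construction to a morphism of adic spaces requires uniform Weierstrass preparation in families and a proof that the resulting group scheme is finite locally free over (the formal model of) $\cY^{\le v}$; this is the technical core, handled in the literature via Lubin's work on $p$-divisible formal groups and Katz's moduli-theoretic arguments. A secondary delicate point is the identity $v_p(A(E/C)) = v'/p$ in the last paragraph, which rests on the explicit change-of-parameter formula for the non-canonical quotient.
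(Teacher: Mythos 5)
The paper states this result as a citation to Katz and Lubin and does not give a proof, so there is no internal argument to compare against. Your sketch reproduces the standard Lubin--Katz argument: you construct $H_{\mathrm{can}}(E)$ from the Newton polygon of $[p](T)$ on $\widehat{E}$, locate the break at $(p-1, v')$ under the hypothesis $v' < p/(p+1)$, take the degree-$(p-1)$ Weierstrass factor to get the roots of large valuation, and then use Lubin's transformation formula for parameters under isogeny to compute $v_p(A(E/C)) = v'/p$. This is faithful to the literature, and you correctly flag that the genuine difficulty lies in the relative version over a formal model of $\cY^{\le v}$ (existence and flatness of the subgroup scheme in families), which is what Katz's paper actually supplies.

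Two small but worth-fixing points. First, in the final sentence you write that the ``small-valuation'' part of $\widehat{E/C}[p]$ is the canonical subgroup, which contradicts your own (correct) characterization two paragraphs earlier: the canonical subgroup consists of the $p$-torsion points whose formal parameter has \emph{large} valuation, i.e.\ the points close to the origin, which is why it lifts $\ker F$ on the special fibre. Second, the three-vertex Newton polygon $(0,1)$, $(p-1,v')$, $(p^2-1,0)$ only makes sense when $v' > 0$, i.e.\ when $E$ has supersingular reduction and $\widehat{E}$ has height $2$; when $v' = 0$ the formal group has height $1$ and the polygon terminates at $(p-1,0)$. The conclusion (the canonical subgroup is the slope-$1/(p-1)$ part) is the same, but saying ``the ordinary case is similar'' slightly conceals that you are switching from a two-slope to a one-slope picture. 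Finally, you should justify that the large-valuation points actually form a \emph{subgroup}: this follows from the fact that the formal group law is $T \oplus T' = T + T' + (\text{higher order})$, so the set of points of valuation $\ge c$ is closed under it; without this remark the Weierstrass factor only produces a closed subscheme, not a group scheme.
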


The affinoid $\cX^{m, \le v} \subset \cX$ can also be described intrinsically as the locus where the Fargues degree of $H$ satisfies a certain inequality, see \cite{BP}. 

Using the isomorphism $\cY^{\le v} \cong \cX^{m,\le v}$, we can now identify \[M_k^{\dagger,v}(N) = H^0(\cX^{m,\le v},\omega^{k}).\] In particular, we see that it has a natural injective restriction map from a space of classical modular forms
$H^0(X_{\Qp},\omega^{k-1})$ with level $\Gamma_1(N)\cap\Gamma_0(p)$. We will now define $U_p$ on $M_k^{\dagger,v}(N)$ in a way that's compatible with the usual definition on $H^0(X_{\Qp},\omega^{k})$. 

From now on we will abbreviate our notation, and use $\cX^{\le v}$ to denote $\cX^{m,\le v}$ (which is isomorphic with $\cY^{\le v}$). Note that $\cX^{\le v}$ is \emph{not} the inverse image of $\cY^{\le v}$ in $\cX$. The full inverse image also contains (a neighbourhood of) the locus where $H$ lifts $E_{\Fpbar}[p]^{\et}$. 

Working geometrically, we think about the following correspondence on $X_{\Qp}$:

\begin{equation}\label{Upcorr}\xymatrix{& Z_{\Qp} \ar[dl]_{p_1}\ar[dr]^{p_2} \\ X_{\Qp}& & X_{\Qp}} \xymatrix{& (E,H,C) \ar[dl]\ar[dr] \\ (E,H)& & (E/C,E[p]/C)}  \end{equation}

where $Z_{\Qp}$ is a moduli space of quadruples $(E,H,C,\eta_N)$ where the triple $(E,H,\eta_N)$ gives a point of $X_{\Qp}$ and $C \subset E[p]$ is another rank $p$ subgroup scheme with $C\cap H = \{e\}$. Pulling back differentials from $E/C$ to $E$ defines an isomorphism of line bundles $p_2^*\omega^k \cong p_1^*\omega^k$. There is a natural adjunction map $p_{1,*}p_1^*\omega^k \to \omega^k$ which is a trace map on sections ($p_1$ is finite flat). See, for example, \cite[\S 5]{BuzWild} for more details.

The Hecke operator $U_p$ on $H^0(X_{\Qp},\omega^{k})$ is obtained as the following composition: 

\begin{align*}H^0(X_{\Qp},\omega^{k}) \xrightarrow{p_2^*} H^0(Z_{\Qp},p_2^*\omega^{k}) \cong H^0(Z_{\Qp},p_1^*\omega^{k}) = H^0(X_{\Qp},p_{1,*}p_1^*\omega^{k}) \\\xrightarrow{\frac{1}{p}\mathrm{tr}_{p_1}} H^0(X_{\Qp},\omega^{k})\end{align*}

Note the rescaling by $1/p$. When $k \ge 2$, we have $\omega^k \cong \omega^{k-2}\otimes\Omega^1_{X_{\Qp}}(\textrm{Cusps})$ (allow simple poles at the cusps), and $U_p$ can be described using the trace map on differentials of $X$, with no rescaling.
\begin{exercise}
	Check the $U_p$ operator defined above coincides with the usual Hecke operator in the theory of modular forms. On $q$-expensions it acts as \[U_p: \sum_n a_nq^n \mapsto \sum_n a_{np}a^n.\]
\end{exercise}

Now we are ready to restrict our Hecke operator to overconvergent affinoids. Theorem \ref{thm:cangp} is absolutely crucial here. It shows that for $v < \frac{p}{p+1}$ our diagram (\ref{Upcorr}) restricts to 

\begin{equation}\label{Upcorr:oc}\xymatrix{&p_1^{-1}(\cX^{\le v}) \ar[dl]_{p_1}\ar[dr]^{p_2} \\ \cX^{\le v}& & \cX^{\le v/p} }\end{equation}

We can therefore define an operator $U_p: H^0(\cX^{\le v/p},\omega^{k}) \to H^0(\cX^{\le v},\omega^{k})$, compatible with $U_p$ on the classical subspaces $H^0(X_{\Qp},\omega^{k})$. Note that $U_p$ increases the radius of overconvergence (if $v > 0$).

By abuse of notation, we also use $U_p$ to denote the composition:
\[H^0(\cX^{\le v},\omega^{k})  \to  H^0(\cX^{\le v/p},\omega^{k})  \xrightarrow{U_p} H^0(\cX^{\le v},\omega^{k})\] with the first map given by restriction. When $v < \frac{1}{p+1}$ we can alternatively describe this map as the composition: 

\[H^0(\cX^{\le v},\omega^{k})  \xrightarrow{U_p}  H^0(\cX^{\le pv},\omega^{k})  \rightarrow H^0(\cX^{\le v},\omega^{k}).\]

\begin{prop} Suppose $0< v < \frac{p}{p+1}$. The endomorphism $U_p$ is a compact operator on the $\Qp$-Banach space $H^0(\cX^{\le v},\omega^{k})$.
\end{prop}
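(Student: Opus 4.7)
The plan is to write $U_p$ as the composition of a bounded operator with a compact restriction, and conclude via the elementary fact that in the category of $\Qp$-Banach spaces, a composition (compact)$\circ$(bounded) is compact. Using the factorisation recalled just before the statement,
\[U_p \col H^0(\cX^{\le v},\omega^k)\xrightarrow{\ \rho\ } H^0(\cX^{\le v/p},\omega^k)\xrightarrow{\ U_p^{\mathrm{oc}}\ } H^0(\cX^{\le v},\omega^k),\]
where $\rho$ is restriction and $U_p^{\mathrm{oc}}$ is the overconvergent Hecke operator coming from the restricted correspondence \eqref{Upcorr:oc}.

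First I would check that $U_p^{\mathrm{oc}}$ is continuous. The pullback $p_2^*$ is bounded on sections of coherent sheaves (it is induced by the ring map underlying a morphism of affinoid adic spaces); the canonical identification $p_2^\ast\omega^k\cong p_1^\ast\omega^k$ is an isomorphism of coherent sheaves and thus bounded; and $p_1$ is finite flat, so the (normalised) trace $p_{1,*}p_1^*\omega^k \to \omega^k$ is a morphism of coherent $\cO_{\cX^{\le v}}$-modules, hence bounded on global sections. Composing, $U_p^{\mathrm{oc}}$ is bounded.

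The heart of the argument is that $\rho$ is compact. Since $v>0$ forces $v/p<v$ strictly, the affinoid $\cY^{\le v/p}$ is strictly contained in $\cY^{\le v}$: on each local chart $\cU_i$, the locus $|f_i|\ge |p|^{v/p}$ has closure inside the open domain $|f_i|>|p|^v$, exhibiting $\cU_i^{\le v/p}$ as a relatively compact subdomain $\cU_i^{\le v/p}\Subset \cU_i^{\le v}$. I would then invoke the general principle (a standard ingredient of Coleman's theory, available from Ludwig's lectures \cite{jl-heidelberg}) that for a coherent sheaf $\cF$ on an affinoid $V$ and a relatively compact affinoid subdomain $U\Subset V$, the restriction $\cF(V)\to \cF(U)$ is a compact map of $\Qp$-Banach spaces. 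Applied to $\omega^k$ and $\cX^{\le v/p}\Subset \cX^{\le v}$, this gives compactness of $\rho$. Alternatively, after choosing a Coleman trivialisation $\omega|_{\cY^{\le v}}\cong \cO$, this reduces to compactness of $\cO(\cY^{\le v})\to \cO(\cY^{\le v/p})$, which in local coordinates boils down to the classical compact restriction $\Qp\langle T\rangle \to \Qp\langle T/\pi\rangle$ between Tate algebras.

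The main obstacle is this last step: cleanly establishing the relative compactness $\cY^{\le v/p}\Subset \cY^{\le v}$ from the Hasse-invariant description of these affinoids, and translating it into compactness of the restriction map for $\omega^k$. The hypothesis $v>0$ is used precisely here; the strict inequality $v/p<v$ provides the ``extra room'' responsible for compactness. By contrast, continuity of $U_p^{\mathrm{oc}}$ is formal, so once the restriction step is pinned down the proposition follows immediately.
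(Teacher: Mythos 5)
Your proof is correct and takes essentially the same route as the paper: both factorize $U_p$ through the restriction map $H^0(\cX^{\le v},\omega^k)\to H^0(\cX^{\le v/p},\omega^k)$ and reduce to compactness of $\cO(\cX^{\le v})\to\cO(\cX^{\le v/p})$. The paper deduces this by embedding the affinoids in projective space (following Exercise~\ref{exo:affinoids}(5)) while you invoke relative compactness of the nested Hasse-invariant loci together with the Tate-algebra model; these are interchangeable ways of making the same ``sub-disc of smaller radius'' intuition precise, and the reduction from $\omega^k$ to $\cO$ is justified in the paper by finite projectivity, which is the precise form of the ``general principle'' you cite.
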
 
\begin{proof}
	$U_p$ factorises through the restriction map \[H^0(\cX^{\le v},\omega^{k}) \to  H^0(\cX^{\le v/p},\omega^{k}),\] so it suffices to prove that this is compact. We can rewrite this map as \[H^0(\cX^{\le v},\omega^{k}) \xrightarrow{1\otimes\id} \cO(\cX^{\le v/p})\widehat{\otimes}_{\cO(\cX^{\le v})}H^0(\cX^{\le v},\omega^{k})\] and $H^0(\cX^{\le v},\omega^{k})$ is a finite projective $\cO(\cX^{\le v})$-module. So in fact it suffices to show that the restriction map on affinoid algebras $\cO(\cX^{\le v}) \rightarrow \cO(\cX^{\le v/p})$ is compact. This is analogous to the fact that restricting functions from a closed disc to a sub-disc of smaller radius is a compact map. If you followed the second hint in Exercise \ref{exo:affinoids}(5), you might have described the affinoids $\cX^{\le v}$ as intersections of $\cY$ with explicit affinoid subspaces of the projective space $\mathbb{P}(H^0(Y_{\Qp},\omega^{n(p-1)}))$ for $n$ sufficiently large. With this description, the compactness of the restriction maps should be quite clear.
\end{proof}
We let $\TT^N$ be the `abstract Hecke algebra', a polynomial algebra over $\Qp$ generated by $\{U_p,T_l,S_l: l \nmid Np\}$ (we omit the Hecke operators at primes dividing $N$ for simplicity; you might want to include them if you want to capture the full $q$-expansion of an eigenform). This acts on the spaces $H^0(\cX^{\le v},\omega^{k})$.

Now we have the spectral theory explained in \cite{jl-heidelberg} at our disposal. In particular, we have a characteristic power series:

\[F_{k,N}^{\dagger,v} := \det(1-XU_p|M_k^{\dagger,v}(N)) \in \Qp\left\{\left\{X \right\}\right\}.\] 

Here, $\Qp\left\{\left\{X \right\}\right\}$ is the ring of entire power series, as defined in \cite{jl-heidelberg}.

Comparing the two factorizations of $U_p$ and applying \cite[Lemma 2.38]{jl-heidelberg} tells us that $F_{k,N}^{\dagger,v} = F_{k,N}^{\dagger,v/p}$, and so $F_{k,N}^{\dagger,v}$ is actually independent of $v \in (0,\frac{p}{p+1})$. We therefore drop the $v$ from the notation. The characteristic power series $F^\dagger_{k,N}(X)$ has the polynomial $X^{\deg(P_{k,N})}P_{k,N}(1/X)$ as a factor, corresponding to the classical subspace $H^0(X_{\Qp},\omega^{p-1}) \subset M_k^{\dagger,v}(N)$. 

Any polynomial factor $Q$ of $F_{k,N}^{\dagger}$ defines a finite dimensional subspace \[(M_k^{\dagger,v}(N))^Q \subset M_k^{\dagger,v}\] which is independent of $v$. We can for example choose $Q$ to be the factor corresponding to the roots $\alpha$ of $F_{k,N}^{\dagger}$ with $v_p(\alpha)\ge -h$. Then we define $M_k^{\dagger}(N)^{\le h}$ to be the corresponding direct summand. It is spanned by generalized $U_p$-eigenvectors with $U_p$-eigenvalue the inverse of a root of $Q$. 

Considering $q$-expansions, we see that all the slopes of $U_p$ are $\ge 0$. An important theorem of Coleman tells us that overconvergent modular forms with small (compared to $k$) slope are classical:

\begin{theorem}[Coleman, \cite{coleman-classicality1}]\label{thm coleman-classical}
	If $h < k - 1$, the map $H^0(X_{\Qp},\omega^{k})^{\le h} \subset M_k^{\dagger}(N)^{\le h}$ is an isomorphism.
\end{theorem}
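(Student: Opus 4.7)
My plan is to prove that every $f\in M_k^{\dagger}(N)^{\le h}$ with $h<k-1$ extends to a global section of $\omega^k$ on the compactified modular curve $\cX$ of level $\Gamma_1(N)\cap\Gamma_0(p)$; rigid GAGA for the proper $\Zp$-scheme $X$ then identifies this extension with an element of $H^0(X_{\Qp},\omega^k)$, and since restriction $H^0(\cX,\omega^k)\hookrightarrow H^0(\cX^{m,\le v},\omega^k)$ is injective, this yields the desired isomorphism on slope $\le h$ parts. Throughout I use $M_k^{\dagger,v}(N)=H^0(\cX^{m,\le v},\omega^k)$ and set
\[Q^v:=\coker\bigl(H^0(\cX,\omega^k)\hookrightarrow H^0(\cX^{m,\le v},\omega^k)\bigr),\qquad Q:=\dirlim_v Q^v,\]
endowed with the induced action of $U_p$. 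The strategy reduces to showing that every Newton slope of $U_p$ on $Q$ is $\ge k-1$: then the image of $f$ in the $\le h$ part of $Q$ vanishes, so $f$ extends to $\cX$.

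Before the slope bound is used, one easy preliminary step lets us extend $f$ throughout the multiplicative component. Assume (by induction on the generalised-eigenspace structure) that $U_pf=a_pf$ with $v(a_p)\le h$. Because the correspondence (\ref{Upcorr:oc}) together with Theorem \ref{thm:cangp} shows that $U_p$ induces a map $H^0(\cX^{m,\le v'},\omega^k)\to H^0(\cX^{m,\le pv'},\omega^k)$ whenever $pv'<p/(p+1)$, the identity $f=a_p^{-1}U_pf$ extends $f$ from $\cX^{m,\le v_0}$ to $\cX^{m,\le pv_0}$, and iterating extends it to $\cX^{m,\le v}$ for every $v<p/(p+1)$.

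The heart of the proof is describing $Q$ and the $U_p$-action on it locally at each supersingular point $s\in\cX$. The completed local ring of $\cX$ at $s$ has the form $W(\kappa)[\![u,w]\!]/(uw-\pi)$, with $u$ a local parameter on $\cX^m$ and $w$ on $\cX^e$; after the previous step our form $f$ is defined on the punctured annulus $uw\ne 0$ and admits a convergent Laurent expansion $\sum_{n\in\ZZ}c_n u^n$ there. The obstruction to extending $f$ across $s$ onto $\cX^e$ is precisely the polar part $\sum_{n<0}c_n u^n$, so $Q$ is a direct sum over supersingular points of such polar-tail spaces. To compute the $U_p$-action on $Q$ one analyses the $U_p$-correspondence near $s$: a point $(E,H_{\mathrm{can}},C)$ with $C\ne H_{\mathrm{can}}$ maps under $p_2$ to $(E/C,E[p]/C)\in\cX^{m,\le v/p}$, so $p_2$ contracts the supersingular annulus into the ordinary locus. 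Combined with the $1/p$ trace normalisation and the transformation of integral invariant differentials under the degree-$p$ isogeny $E\to E/C$ on the formal group near $s$ (which is where the weight $k$ enters), one obtains that $U_p$ acts on $Q$ with all Newton slopes $\ge k-1$.

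The main obstacle is this final local computation: verifying that the $U_p$-action on polar parts at a supersingular point is given by a matrix with entries of $p$-adic valuation $\ge k-1$. This requires careful bookkeeping of the behaviour of integral invariant differentials under the non-canonical isogeny $E\to E/C$ when $E$ has supersingular reduction (so that both $H_{\mathrm{can}}$ and $C$ live in the formal group), and of how the $1/p$ normalisation and the contraction property of Theorem \ref{thm:cangp} combine to produce the sharp $p^{k-1}$ factor --- this is exactly the place where the hypothesis $h<k-1$ becomes necessary and sufficient. Everything else --- the iteration within $\cX^m$, the identification of $Q$ with polar tails at supersingular points, and the concluding rigid GAGA on the proper scheme $X$ --- is comparatively routine.
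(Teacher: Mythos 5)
The paper does not in fact prove this theorem: it cites Coleman's original argument and merely indicates the strategy of Kassaei and Boxer--Pilloni, namely that the cokernel of the inclusion $H^0(X_{\Qp},\omega^k)\hookrightarrow M_k^{\dagger,v}(N)$ can be realised as a (relative) cohomology group on which $U_p$ has all slopes $\ge k-1$. Your proposal is in precisely this spirit, with Buzzard-style analytic continuation via $f=a_p^{-1}U_pf$ as a preliminary, so the overall route is sound. However, two of the steps you label ``comparatively routine'' are not, and the step you label the heart of the matter is left entirely undone.

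First, the identification of $Q$ with ``polar tails at supersingular points'' does not follow from your preliminary step. The iteration $f=a_p^{-1}U_pf$ extends $f$ only to $\cX^{\le v}$ for $v<p/(p+1)$; near a supersingular point $s$ with completed local ring $W(\kappa)\lb u,w\rb/(uw-\pi)$, this is the sub-annulus $\{|u|>r_0\}$ for a fixed $r_0$ with $|\pi|<r_0<1$, not the full punctured annulus. So $f$ does not yet admit a Laurent expansion converging on all of $\{|\pi|<|u|<1\}$, and the polar part $\sum_{n<0}c_nu^n$ you write down only converges on $\{|u|>r_0\}$. Consequently $Q=\dirlim_vQ^v$ is not literally a sum of polar-tail spaces; one must either push the analytic continuation further (this is what Buzzard and Kassaei do, and it is the most delicate part of their arguments) or replace the na\"ive cokernel with the local-cohomology group $H^1_Z(\cX,\omega^k)$ ($Z$ the complement of $\cX^{\le v}$), which is what Boxer--Pilloni do, precisely to make the ``Laurent tail'' picture rigorous. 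One also has to account for extension onto $\cX^{e,\ord}$ itself, which is not part of any supersingular tube. Simply asserting the identification is a genuine gap.

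Second, and more importantly, the entire content of the theorem is the claim that $U_p$ acts on $Q$ (however it is modelled) with slopes $\ge k-1$. Your final paragraph acknowledges this --- ``this requires careful bookkeeping\ldots\ which is exactly the place where the hypothesis $h<k-1$ becomes necessary'' --- but gives no argument: no computation of the $U_p$-correspondence on the annulus, no normalisation of $\omega^k$ under the isogeny $E\to E/C$ on the supersingular tube, no explanation of where the factor $p^{k-1}$ comes from (in the Boxer--Pilloni normalisation it arises from the dualizing sheaf/pushforward on the correspondence; in Kassaei's it is the gluing estimate). Asserting the slope bound and then pointing out that it is hard to prove is not a proof. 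As written, the proposal is a reasonable outline of a known strategy, but the two load-bearing steps --- the precise description of the cokernel and the slope $\ge k-1$ estimate on it --- are both missing.
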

Apart from the original proof, alternative arguments which generalise well have been given by Kassaei \cite{Kassaei} and Boxer--Pilloni \cite{BP}. The latter argument involves identifying the cokernel of the inclusion from the classical subspace as a relative cohomology group (i.e~cohomology with support in a closed subset) which also admits a compact $U_p$-operator, and showing that the slopes of $U_p$ on this cokernel are all $\ge k-1$. 

We refer to Calegari's AWS notes \cite{calegari-aws} as somewhere to read lots more about overconvergent modular forms. 

\subsection{Weight space}\label{sec wtspace}
We will soon discuss families of modular forms, so we want to define Banach modules over affinoid algebras, whose fibres at closed points will give us back the spaces $M_k^{\dagger,v}(N)$. Our affinoids will be opens in \emph{weight space} which we discuss now. 

\begin{defn}
	Formal weight space $\mathfrak{W}$ is defined to be the adic space \[\mathfrak{W}:=\Spa(\Zp\lb\Zp^\times\rb,\Zp\lb\Zp^\times\rb).\] Its maps from an affinoid $\Spa(R,R^+)$ correspond to continuous homomorphisms $\Zp^\times \to R^\times$. \end{defn}
	
	The weight space we will mostly be interested in is the generic fibre $\cW:= (\mathfrak{W})_{\Qp}$. The maps from affinoid adic spaces $\Spa(R,R^+)$ over $\Spa(\Qp,\Zp)$ to $\cW$ correspond to continuous homomorphisms $\Zpx \to R^\times$.

When $(R,R^+)$ is a $\Qp$-affinoid, we will refer to a continuous homomorphism $\Zpx \to R^\times$ as an \emph{affinoid weight}. We will always assume $R$ is Noetherian. 

We can make $\cW$ much more explicit by thinking about $\Zpx$. For notational simplicity, we now assume $p$ is odd. So $\Zpx \cong (\Z/p\Z)^\times \times (1+p\Zp)$. Let $\Delta$ denote the finite group of characters $(\Z/p\Z)^\times \to \Zpx$. If $\kappa: \Zpx \to R^\times$ is a continuous homomorphism, we set $\epsilon(\kappa):= \kappa|_{(\Z/p\Z)^\times} \in \Delta$. 

\begin{exercise}\label{exo:wtspace}
	Show that the map $\kappa \mapsto (\epsilon(\kappa),\kappa(1+p)-1)$ gives an isomorphism between $\cW$ and $\Delta \times D(0,<1)$, where $D(0,<1)$ is the open unit disc over $\Qp$. 
	
	(Two observations that might be helpful are that \[D(0,<1) = \Spa(\Zp\lb t\rb,\Zp\lb t\rb)_{\Qp}\] and that maps $\Spa(R,R^+) \to D(0,<1)$ biject with the subset of topologically nilpotent elements $R^{\circ\circ}$.)
\end{exercise}

Something we will use when we talk about definite quaternion algebras is that continuous homomorphisms $\kappa: \Zpx \to R^\times$ to $\Qp$-affinoids are necessarily \emph{locally analytic}. This means that there exists an $n \ge 1$ such that for each $a \in (\Z/p^n\Z)^\times$, the function $\kappa: (a+ p^n\Zp) \to R^\times$ is given by evaluating a power series $\kappa(a+p^n z) = f(z) \in R\langle z \rangle$. This can be proved using the $p$-adic $\log$ and $\exp$ (see \cite[Theorem 6.3.4]{eigenbook}).

Of course we want to interpret the usual integral weights $k$ of our modular forms as points in weight space:

\begin{defn}
	For $k \in \Z$ we define an \emph{integral} weight $[k]: \Zp^\times \to \Qpx$ by $[k](z) = z^{k-2}$. 
\end{defn}

The normalization of sending weight $2$ to the identity map is because when we think about modular forms in terms of Betti cohomology, weight $2$ corresponds to the trivial local system. 

For the statement of the next lemma, recall that a subset of an adic space is `Zariski dense' if the smallest closed subset, defined by a coherent ideal sheaf, containing this subset is the whole space. See, for example, \cite[\S2.1]{JN-interp} for more details.

\begin{lemma}
	The integral weights are a Zariski dense subset of $\cW$. Moreover, each integral weight $[k]$ has a basis of affinoid neighbourhoods $V$ in $\cW$ such that the integral weights in $V$ are Zariski dense.
\end{lemma}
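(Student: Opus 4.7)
The plan is to use Exercise~\ref{exo:wtspace} to transport the problem onto a disjoint union of open unit discs, and then apply the rigid-analytic identity principle. Under the identification $\cW \cong \Delta \times D(0,<1)$, the integral weight $[k]$ corresponds to the character $z \mapsto z^{k-2}|_{(\Z/p\Z)^\times}$ in $\Delta$ together with the coordinate $t_k := (1+p)^{k-2} - 1 \in p\Zp \subset D(0,<1)$. In particular, the integral weights lying in a single component are exactly the $[k]$ with $k$ in a fixed residue class $k_0$ modulo $p-1$.

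I would next verify that $\{t_k\}$ is $p$-adically dense in $p\Zp$ within one component. Writing $k = k_0 + (p-1)m$, one has $(1+p)^{k-2} = (1+p)^{k_0-2}(1+p)^{(p-1)m}$. Since $1+p$ is a topological generator of $1+p\Zp$ and $p-1$ is a unit in $\Zp$, the element $(1+p)^{p-1}$ is again a topological generator; hence $\{(1+p)^{(p-1)m}: m\in\Z\}$ is dense in $1+p\Zp$, which gives density of $\{t_k\}$ in $p\Zp$. Concretely, taking $k = k_0 + (p-1)p^N$ gives $|t_k - t_{k_0}|_p = p^{-(N+1)}$, so integral weights accumulate $p$-adically at $[k_0]$ inside its component.

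For the basis of affinoid neighbourhoods, I would take $V_N$ to be the closed affinoid disc of radius $p^{-N}$ around $t_{k_0}$ in the component of $[k_0]$, for $N \ge 1$; these form a cofinal system of affinoid neighbourhoods of $[k_0]$ in $\cW$. By the previous step each $V_N$ contains infinitely many integral weights, with $[k_0]$ as an interior accumulation point. The Weierstrass preparation theorem applied to a closed affinoid disc implies that a non-zero element of $\cO(V_N)$ has only finitely many zeros; consequently any coherent ideal sheaf on $V_N$ vanishing at all integral weights in $V_N$ must be zero, which is exactly the Zariski density statement for $V_N$.

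Finally, global Zariski density in $\cW$ reduces to density in each component. On a single component, which is an open unit disc (irreducible and smooth of dimension one), any proper Zariski closed subset is locally cut out by a non-zero analytic function and hence has finite intersection with each closed affinoid subdisc; since the integral weights in the component accumulate inside arbitrarily small affinoid discs around $0 \in p\Zp$, they cannot all lie in a proper Zariski closed subset. There is no real obstacle here: the only essential input is Weierstrass preparation on a closed affinoid disc, with the rest being bookkeeping on $\Delta \times D(0,<1)$.
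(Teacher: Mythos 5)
Your proof is correct and takes essentially the same route as the paper: both transport the problem to $\Delta \times D(0,<1)$ via Exercise~\ref{exo:wtspace}, observe that the points $(1+p)^{k-2}-1$ for $k \equiv k_0 \pmod{(p-1)p^N}$ accumulate at $(1+p)^{k_0-2}-1$, and invoke the identity principle (a non-zero function on a one-dimensional affinoid has finitely many zeros) to get Zariski density in small affinoid neighbourhoods. You supply some extra bookkeeping (the fact that $(1+p)^{p-1}$ is again a topological generator, the explicit cofinal system of closed discs $V_N$) that the paper leaves implicit, but there is no difference in substance.
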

\begin{proof}
	Using Exercise \ref{exo:wtspace}, we can reduce to a statement about points of the form $(1+p)^k-1$ in the open unit disc. If we take any sequence of integers $k_m$ with $k_m \equiv k \mod{(p-1)p^m}$, then the sequence $(1+p)^{k_m}-1$ accumulates at $(1+p)^k-1$. In particular, there are no non-zero functions on an affinoid neighbourhood of $(1+p)^k-1$ which vanish on the sequence $((1+p)^{k_m}-1)_{m \ge 1}$.
\end{proof}

The second part of the Lemma is often important in practice, when one wants to work in a small neighbourhood of an integral weight.

\subsection{Families of overconvergent modular forms and proving local constancy of slopes}\label{ssec:locconstslope}
Another key contribution of Coleman \cite{Coleman-Banach} was to define for each affinoid weight $\kappa: \Zpx \to R^\times$, and $v$ sufficiently small (depending on $\kappa$), a Banach $R$-module $M_\kappa^{\dagger,v}(N)$ of overconvergent modular forms of weight $\kappa$. We think of this module as parameterizing families of overconvergent modular forms over $\Spa(R,R^+)$.  

There is a compact operator $U_p$ on $M_\kappa^{\dagger,v}(N)$, together with the other Hecke operators, such that whenever we have $f_k: R \to \Qp$ with $f_k\circ\kappa = [k]$, we have a Hecke-equivariant isomorphism

\[f_k^*:M_\kappa^{\dagger,v}(N)\widehat{\otimes}_{R,f_k^*}\Qp \cong M_k^{\dagger, v}(N).\]

To prove local constancy of slopes, we consider the following: fix a weight $k_1 \in \Z$, and a slope $h \in \Q_{\ge 0}$. The characteristic power series $F_{k_1,N}^{\dagger}$ has a factorisation $F_{k_1}^{\le h}F_{k_1}^{> h}$, where the Newton polygons of the two factors have all slopes $\le h$ and $> h$ respectively. The result \cite[Proposition 3.19]{jl-heidelberg} shows that this factorisation extends to an affinoid  neighbourhood $U$ of $[k_1]$ in $\cW$ (which we assume to be connected), with corresponding affinoid weight $\kappa$.

In particular, we can find a finite projective $\cO(U)$-module $M_\kappa^{\dagger,v}(N)^{\le h}$, a direct summand of $M_\kappa^{\dagger,v}(N)$, such that $f_k^*$ induces isomorphisms \[M_\kappa^{\dagger,v}(N)^{\le h}{\otimes}_{R,f_k^*}\Qp \cong M_k^{\dagger, v}(N)^{\le h}\] for every $[k] \in U$. This proves that the vector space on the right hand side has the same dimension whenever $[k] \in U$ --- the rank of the finite projective module. In particular, if $h < k - 1$, Coleman's classicality theorem shows that this coincides with the dimension of the appropriate `small slope' space of \emph{classical} modular forms. That completes a sketch of the proof of Theorem \ref{thm:locallyconstantslopes}.

We won't go into how Coleman defined families of overconvergent modular forms --- it involved twisting by a $p$-adic family of Eisenstein series, see \cite[\S6]{Buz07} for details. It is also explained there why the Banach modules $M_\kappa^{\dagger,v}$ satisfy `property (Pr)' \cite[Definition 2.39]{jl-heidelberg}.

A modern alternative to Coleman's approach was initiated by Andreatta--Iovita--Stevens \cite{ais-hilbert}, Pilloni \cite{pilloni-oc} and Chojecki--Hansen--Johansson \cite{CHJ}, using the Hodge--Tate period map to give geometric definitions for families of overconvergent modular forms. 

\subsection{The Coleman--Mazur eigencurve}

We have now introduced everything needed to construct the Coleman--Mazur eigencurve \cite{Col98}, using the `eigenvariety machine' presented in \cite{jl-heidelberg}. Coleman and Mazur originally worked with tame level $N = 1$, and $p > 2$, but these restrictions were removed by Buzzard \cite{Buz07}. 

As we recalled in the previous section, for affinoid opens $U \subset \cW$ and sufficiently small $v$, we have Banach modules of overconvergent modular forms $M_{\kappa_U}^{\dagger,v}(N)$ equipped with a compact operator $U_p$. Here $\kappa_U: \Zpx \to \cO(U)^\times$ is the affinoid weight induced by the embedding $U \subset \cW$.

 The characteristic power series of $U_p$ on these modules is independent of $v$ (as long as $v$ is sufficiently small), as in the fixed weight case this is proved using \cite[Lemma 2.38]{jl-heidelberg}. So they glue to give a characteristic power series $F^\dagger_N \in \cO(\cW)\{\{X\}\}$ which specializes to $F_{k,N}^{\dagger}$ at each integral weight $[k]$. In fact, since all the slopes of $U_p$ are $\ge 0$\footnote{This can be checked by considering the effect of $U_p$ on $q$-expansions of overconvergent modular forms.}, this power series is in $\cO^+(\cW)\{\{X\}\} = \Zp\lb\Zpx\rb\{\{X\}\}$.

As explained in \cite[\S5.2]{jl-heidelberg}, this allows us to construct the \emph{spectral curve} $\cZ = V(F_N^\dagger)$, the Fredholm hypersurface associated to $F^\dagger_N$. The `Riesz theory' developed in \cite[Theorem 4.6]{jl-heidelberg} takes the Banach modules of overconvergent modular forms and glues them into a coherent sheaf $\cN$ over $\cZ$ with an action of the Hecke algebra $\psi: \TT^N\to\End(\cN)$. Then the eigenvariety datum $(\cZ,\cN,\TT,\psi)$ gives rise to an adic space $\cE = \cE_N \xrightarrow{\pi} \cZ$, finite over the spectral curve. The maximal points $x \in \Max(\cE)$ correspond to systems of eigenvalues for the Hecke algebra $\TT^N$ in the spaces $M_{w(x)}^{\dagger,v}(N)$.\footnote{We have chosen to construct our eigencurve without Hecke operators at places dividing $N$. This is usually the simplest option. If you want to pin down the $q$-expansion of an overconvergent eigenform precisely, then you ned to include operators $U_l$ for $l|N$.} This is the \emph{tame level} $N$, $p$-adic eigencurve. It comes with a map to weight space $w: \cE \to \cW$ (the composite of $\pi$ with the map from the spectral curve to weight space).

The space $\cE$ comes equipped with its set of \emph{classical points} $Z$ which are the points $x \in \Max(\cE)$ with $w(x) = [k]$ for some integer $k$ and whose corresponding system of Hecke eigenvalues appears in $H^0(X_{\Qp},\omega^{k})$.\footnote{We should also include more general classical points coming from modular forms with level $\Gamma_1(p^n)$ and non-zero $U_p$-eigenvalue, but we have restricted to level $\Gamma_0(p)$ for simplicity.} These systems of Hecke eigenvalues come from classical modular forms of level $\Gamma_0(p)\cap\Gamma_1(N)$ and weight $k$.

Here are some important properties of $\cE$. We will sketch the proofs of these properties in the next section:

\begin{enumerate}
	\item The map $w: \cE\to \cW$ is locally quasi-finite and flat. In particular, $\cE$ is equidimensional of dimension one.
	\item For any irreducible component $\cC \subset \cE$, $w(\cC)$ is the complement of finitely many points in an irreducible component of $\cW$.
	\item\label{dense} The subset $Z \subset \Max(\cE)$ is Zariski 
	dense and accumulates at itself (in other words, each point of 
	$Z$ admits a basis of affinoid neighbourhoods $V$ such that $V \cap Z$ 
	is Zariski dense in $V$).
	\item\label{reduced} $\cE$ is reduced.
	\item $\cE$ carries a family of Galois (pseudo-)representations: there is a continuous function $T : \Gal(\Qbar/\Q) \to \cO^+(\cE)$ such that for each classical point $x \in Z$, the specialization $T_x$ is the trace of the Galois representation associated to a classical modular form whose Hecke eigenvalues match up with $x$.\footnote{It was shown by Coleman--Mazur that this family of pseudo-representations can be lifted (non-canonically) to a Galois representation on a vector bundle over the eigencurve. See for example \cite[Prop.~3.1.1, Remark~3.1.2]{JNwt2}.}
\end{enumerate}

\subsection{Geometric properties of the eigencurve}\label{sec: geom props}
As in the previous section, $\cE$ denotes the tame level $N$, $p$-adic eigencurve, with its map $w: \cE\to\cW$ to weight space. Our first task is to explain why the `eigencurve' is in fact a curve:

\begin{prop}\label{prop: ec equidim} The map $w: \cE\to \cW$ is locally quasi-finite and flat. In particular, $\cE$ is equidimensional of dimension one.
\end{prop}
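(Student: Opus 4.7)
The plan is to work locally on $\cE$ via the cover by preimages of slope-adapted affinoids in $\cW$, show that on each piece $w$ is finite and flat, and then deduce equidimensionality by dimension counting.

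First, using the Fredholm factorization result \cite[Proposition 3.19]{jl-heidelberg}, I would cover the spectral curve $\cZ$ by affinoids of the form $\cZ_{U,h} = V(Q) \subset U \times \AA^1$, where $U \subset \cW$ is a connected affinoid and $F^\dagger_N|_U = Q \cdot S$ is the slope $\le h$ factorization, with $Q$ a polynomial. Each $\cZ_{U,h}$ is finite flat over $U$, and these affinoids cover $\cZ$. Since $\cE \to \cZ$ is finite by construction, the preimages $\cE_{U,h} := \cE \times_{\cZ} \cZ_{U,h}$ form an admissible cover of $\cE$ and each is finite over $U$. In particular $w$ is locally finite, and therefore locally quasi-finite.

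Next, to establish flatness, I would identify $\cO(\cE_{U,h})$ with the image of $\TT^N \otimes_{\Qp} \cO(\cZ_{U,h})$ in $\End_{\cO(\cZ_{U,h})}(\cN|_{\cZ_{U,h}})$, where $\cN$ is the coherent sheaf on $\cZ$ produced by the Riesz theory \cite[Theorem 4.6]{jl-heidelberg}. The restriction $\cN|_{\cZ_{U,h}}$ corresponds to the slope $\le h$ summand $M_{\kappa_U}^{\dagger,v}(N)^{\le h}$, which is a finite projective $\cO(U)$-module; this is exactly where property (Pr) from \cite[Definition 2.39]{jl-heidelberg} and the slope decomposition recalled in Section \ref{ssec:locconstslope} enter. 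Consequently $\End_{\cO(U)}(M_{\kappa_U}^{\dagger,v}(N)^{\le h})$ is finite projective over $\cO(U)$, and $\cO(\cE_{U,h})$ embeds into it as an $\cO(U)$-subalgebra, hence is torsion-free as an $\cO(U)$-module.

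Since $\cW$ is a disjoint union of open unit discs (Exercise \ref{exo:wtspace}), it is a smooth rigid space of pure dimension one, and $\cO(U)$ is a $1$-dimensional regular Noetherian ring. Over such a base every finitely generated torsion-free module is flat, so $\cO(\cE_{U,h})$ is flat over $\cO(U)$, proving that $w$ is locally flat. Equidimensionality then follows formally: a locally quasi-finite flat morphism has relative dimension zero, and $\cW$ is equidimensional of dimension one, so $\cE$ inherits pure dimension one. The main obstacle I anticipate is the careful unwinding of the eigenvariety machine needed to identify $\cN|_{\cZ_{U,h}}$ with the finite projective slope $\le h$ summand $M_{\kappa_U}^{\dagger,v}(N)^{\le h}$ as an $\cO(U)$-module; once this identification is in place, the flatness assertion reduces to the standard fact that torsion-free modules over a $1$-dimensional regular ring are flat.
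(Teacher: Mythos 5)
Your proof is correct and follows essentially the same route as the paper: both cover $\cE$ by the slope-adapted affinoid pieces arising from the eigenvariety machine, identify each as $\Spa(R,R^+)$ with $R$ a subring of $\End_{\cO(U)}(P)$ for $P$ finite projective (hence $R$ torsion-free over the Dedekind domain $\cO(U)$), and conclude flatness and equidimensionality. You unwind the Fredholm factorization and Riesz theory a bit more explicitly than the paper, which simply cites the general structure results from the eigenvariety machine, but the content is the same.
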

\begin{proof}
	Recall that $\cE$ is finite over the spectral curve $\cZ$. It follows from general properties of spectral varieties \cite[Theorem  3.9]{jl-heidelberg} that $\cZ$ is locally quasi-finite and flat over $\cW$, so $\cE$ is locally quasi-finite over $\cW$. To prove the flatness of $\cE$ over $\cW$, we make use of the fact that our $\cW$ is one-dimensional. Recall that, by the key construction of \cite[\S5]{jl-heidelberg}, $\cE$ has an open cover by affinoids of the form \[w: \Spa(R,R^+) \to U,\] where $U$ is a connected affinoid open in $\cW$ and $R$ is a subring of $\End_{\cO(U)}(P)$ for a finite projective $\cO(U)$-module $P$.
	
	Since $\cW$ is smooth and one-dimensional, $\cO(U)$ is a Dedekind domain. Since $R$ is torsion-free over $\cO(U)$, it is flat. 
\end{proof}

Perhaps the most important property of the eigencurve is the density of classical points. It is of crucial importance when we use information about classical modular forms to establish properties of the eigencurve (cf.~the proof that the eigencurve is reduced, Proposition \ref{prop:reduced}). As a prelude to this, we need to show that the eigencurve contains plenty of points with integral weight. 
\begin{prop}\label{prop: image of irred comp in weight space}
	For any irreducible component $\cC \subset \cE$, $w(\cC)$ is the complement of finitely many points in an irreducible component of $\cW$.
\end{prop}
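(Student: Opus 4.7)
The plan is to reduce the statement to the analogous one for the spectral curve $\cZ$, and then use the structure theory of Fredholm hypersurfaces together with the low dimension of $\cW$.

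First, I would argue that $\pi(\cC) \subset \cZ$ is contained in a unique irreducible component $\cD$ of $\cZ$, and in fact $\pi(\cC) = \cD$. The map $\pi: \cE \to \cZ$ is finite, hence closed, so $\pi(\cC)$ is a closed irreducible subset of $\cZ$ and lies in some irreducible component $\cD$. Since $\cE$ is equidimensional of dimension one by Proposition \ref{prop: ec equidim}, the irreducible component $\cC$ has dimension one; finiteness of $\pi$ preserves dimension, so $\pi(\cC)$ is a one-dimensional closed irreducible subset of the one-dimensional irreducible space $\cD$, forcing $\pi(\cC) = \cD$. Consequently $w(\cC) = w(\pi(\cC))$ equals the image of $\cD$ under the projection $\cZ \to \cW$, and it suffices to prove the statement for irreducible components of the spectral curve.

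Next, I would use the basic structure theorem for Fredholm hypersurfaces (as recalled in \cite{jl-heidelberg}, modelled on Conrad's results and Buzzard's \cite{Buz07}). Because $\cO(\cW)$ is a product of integral domains (one per component of $\cW$, via Exercise \ref{exo:wtspace}), the Fredholm series $F_N^\dagger \in \cO(\cW)\{\{X\}\}$ factors componentwise, and on each component of $\cW$ it decomposes uniquely, up to units, into a (possibly infinite) product of irreducible Fredholm series. The irreducible components $\cD$ of $\cZ$ correspond to irreducible factors $G$ of $F_N^\dagger$ via $\cD = V(G)$. Thus it remains to analyze the projection $V(G) \to \cW$ for an irreducible Fredholm series $G$ on a component $\cW^\circ$ of $\cW$.

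For such a $G$ the projection $V(G) \to \cW^\circ$ is flat and locally quasi-finite by \cite[Theorem 3.9]{jl-heidelberg}. Because $\cW^\circ$ is one-dimensional and $V(G)$ is non-empty and equidimensional of dimension one, the projection must be surjective onto a Zariski dense open subset of $\cW^\circ$; more precisely, flatness implies the image is open, and the complement of the image is a closed analytic subset of $\cW^\circ$ of dimension $\le 0$, i.e.\ a finite set of points. (Alternatively: write $G = 1 + \sum_{n\ge 1} a_n X^n$ with $a_n \in \cO(\cW^\circ)$; a point $\kappa \in \cW^\circ$ fails to lie in $w(V(G))$ only when every specialization $a_n(\kappa) = 0$ for $n \ge 1$, which forces $\kappa$ to lie in the vanishing locus of the leading coefficient appearing after factoring out common zeros, hence in a finite set since $\cW^\circ$ is one-dimensional and $G$ is non-constant.)

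The main obstacle is the last step, namely pinning down exactly how and why the projection from a single irreducible component of the spectral hypersurface misses only finitely many points of the corresponding component of $\cW$. This is a finer statement than the general flatness and equidimensionality, and it relies essentially on $\cW$ being one-dimensional together with the algebraic structure of irreducible Fredholm series (via Weierstrass preparation on affinoid subdomains of $\cW^\circ$, each irreducible factor restricts to a distinguished polynomial times a unit, which makes the finiteness of the missing locus transparent).
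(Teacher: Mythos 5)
Your overall plan is the same as the paper's: reduce to the spectral curve $\cZ$ via the finite surjection $\pi\colon \cE\to\cZ$, use the structure theory of Fredholm hypersurfaces (Conrad's \cite[Corollary 4.2.3]{Con99}, quoted in the paper) to identify the image of $\cC$ with $V(F_\cC)$ for an irreducible Fredholm factor $F_\cC$, and then argue that the projection $V(F_\cC)\to\cW_\chi$ misses only finitely many points. However, the last step as you have written it has a real gap.

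The issue is that neither of the two arguments you offer for finiteness actually works over the open unit disc $\cW_\chi$ with coefficients allowed to be arbitrary elements of $\cO(\cW_\chi)$. The flatness/openness argument does not give that the complement is a finite set: the complement of an open subset of a one-dimensional space can be infinite, and a Zariski-closed subset of the open unit disc of dimension $0$ (i.e.\ cut out by a coherent ideal, locally finite) can still consist of infinitely many points accumulating towards the boundary. For the same reason, the alternative argument does not close by itself: the zero locus $V(a_{n_0})$ of a single nonzero function $a_{n_0}\in\cO(\cW_\chi)$ is only finite on each closed subdisc by Weierstrass preparation; applied ``on affinoid subdomains of $\cW^\circ$'' as you suggest, it does not control the whole open disc, where the number of zeros of an unbounded analytic function can be infinite.

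The missing ingredient, used explicitly in the paper, is that $F^\dagger_N$ has coefficients in the \emph{bounded} subring $\cO^+(\cW)=\Zp\lb\Zpx\rb$ (this is noted earlier: all $U_p$-slopes are $\ge 0$, so the characteristic power series is $p$-integral). Restricting to a component and taking the irreducible factor, one gets $F_\cC = 1+\sum_{n\ge 1}a_n(t)X^n$ with $a_n(t)\in\Zp\lb t\rb$. It is Weierstrass preparation in the two-dimensional regular local ring $\Zp\lb t\rb$ — not merely affinoid-by-affinoid on $\cW_\chi$ — that shows a nonzero $a_{n_0}$ vanishes at only finitely many points of the open unit disc. (One does also need to know, as the paper implicitly uses, that an irreducible Fredholm factor of a power series in $\Zp\lb t\rb\{\{X\}\}$ again lies in $\Zp\lb t\rb\{\{X\}\}$; this is a Gauss-lemma-type statement.) With this boundedness in hand, your argument — that the missing locus is the common zero set of the $a_n$ for $n\ge 1$, which is contained in $V(a_{n_0})$ for any $n_0$ with $a_{n_0}\ne 0$, hence finite — is the paper's argument.
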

\begin{proof}
The map from $\cE$ to the spectral curve $\cZ = V(F_N^\dagger)$ is finite and $\cE$ is equidimensional of dimension equal to $\dim(\cZ)$, the image of an irreducible component $\cC$ in $\cZ$ is an irreducible component $\cC_\cZ$. The image $w(\cC)$ is contained in an irreducible component $\cW_{\chi}$ of $\cW$, corresponding to a character $\chi: (\Zpx)^{\mathrm{tors}} \to \Zpx$ of the torsion subgroup of $\Zpx$. As discussed in \S\ref{sec wtspace}, we have $\cW_{\chi}\cong \Spa(\Zp\lb t\rb,\Zp\lb t\rb)_{\Qp}$. The relevant piece of the spectral curve, $\cZ_\chi := \cZ|_{\cW_\chi}$ is the Fredholm hypersurface of a power series  $F_\chi\in \Zp\lb t\rb\{\{X\}\}$.

It follows from \cite[Corollary 4.2.3]{Con99} that the irreducible component $\cC_\cZ$ is itself a Fredholm hypersurface, given by the vanishing locus of an irreducible factor $F_{\cC}$ of $F_\chi$. Write $F_{\cC} = 1 + \sum_{n\ge 1}a_n(t) X^n \in \Zp\lb t\rb\{\{X\}\}$. The points of $\cW_\chi$ which are missing from $w(\cC)$ correspond to the common zeroes of the $a_n(t)$. By Weierstrass preparation, they are a finite set. 
\end{proof}
\begin{theorem}\label{thm classical dense}
	The subset $Z \subset \Max(\cE)$ of classical points is Zariski 
	dense and accumulates at itself (in other words, each point of 
	$Z$ admits a basis of affinoid neighbourhoods $V$ such that $V \cap Z$ 
	is Zariski dense in $V$).
\end{theorem}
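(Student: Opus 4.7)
The plan is to prove both the accumulation of $Z$ at itself and the Zariski density of $Z$ in $\cE$ by a single local construction. For each $x \in \Max(\cE)$ whose image $w(x)$ is an integer weight (in particular, for every $x \in Z$), I will produce a basis of clopen affinoid neighbourhoods $V \subset \cE$ of $x$ in which $Z \cap V$ is Zariski dense. Accumulation at $Z$ is then immediate, while Zariski density in $\cE$ follows because Proposition \ref{prop: image of irred comp in weight space} guarantees that every irreducible component $\cC$ of $\cE$ contains points with integer weight, so the local argument produces a non-empty open $V \cap \cC$ of $\cC$ where $Z$ is Zariski dense, forcing Zariski density of $Z \cap \cC$ in the irreducible curve $\cC$.

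For the local construction, fix such $x$ with $w(x) = [k_0]$, let $\alpha_x$ denote its $U_p$-eigenvalue, and choose a rational $h > v(\alpha_x)$. By \cite[Proposition 3.19]{jl-heidelberg}, applied as in Section \ref{ssec:locconstslope}, the slope-$\le h$ factorisation of $F_{k_0,N}^\dagger$ extends to a factorisation over any sufficiently small connected affinoid neighbourhood $U$ of $[k_0]$ in $\cW$, producing a finite projective direct summand $M_{\kappa_U}^{\dagger,v}(N)^{\le h}$ of $M_{\kappa_U}^{\dagger,v}(N)$. The corresponding clopen piece $V \subset \cE$ is an affinoid neighbourhood of $x$, and the argument used to prove Proposition \ref{prop: ec equidim} shows that $w|_V : V \to U$ is finite flat. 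Shrinking $U$ through a basis of neighbourhoods of $[k_0]$ in which integral weights are Zariski dense --- which the Lemma preceding Section \ref{ssec:locconstslope} supplies --- yields a basis of such $V$ at $x$.

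For any integer $k > h+1$ with $[k] \in U$, the isomorphism
\[M_{\kappa_U}^{\dagger,v}(N)^{\le h} \otimes_{\cO(U),f_k^*} \Qp \cong M_k^{\dagger,v}(N)^{\le h}\]
identifies the fibre with the classical subspace $H^0(X_{\Qp},\omega^k)^{\le h}$ by Coleman's classicality theorem (Theorem \ref{thm coleman-classical}), so every point of $V$ above such a $[k]$ belongs to $Z$. Removing the finitely many integral weights with $k \le h+1$ still leaves a Zariski dense subset $I_U \subset U$. Since $U$ is a connected (hence smooth one-dimensional irreducible) affinoid open in $\cW$ and $w|_V$ is finite flat, every irreducible component of $V$ dominates $U$, and therefore meets $w^{-1}(I_U)$ in an infinite set, which is Zariski dense in that one-dimensional component. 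Consequently $Z \cap V \supseteq w^{-1}(I_U)$ is Zariski dense in $V$, completing the local argument and hence the proof.

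The principal technical obstacle is the first step: verifying that the slope-$\le h$ decomposition extends to a basis of connected affinoid neighbourhoods of $[k_0]$ producing finite flat pieces of $\cE$. This is precisely what the spectral theory of \cite{jl-heidelberg} together with the eigenvariety machine supplies, but without it the argument collapses. Once this structural input is in hand, everything else reduces to easy geometric consequences of flatness over the smooth one-dimensional base $\cW$ together with Coleman's classicality criterion.
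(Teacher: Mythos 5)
Your proposal is correct and follows essentially the same route as the paper: fix an integral weight and a slope bound, produce a finite flat affinoid open piece of $\cE$ over a connected neighbourhood of that weight in $\cW$, invoke Coleman's classicality theorem and Zariski density of integral weights to show classical points are dense there, and use Proposition \ref{prop: image of irred comp in weight space} to globalize. The only small slip is calling the neighbourhoods ``clopen''---they are open affinoids and closed inside the slope piece of the spectral curve over $U$, but not closed in $\cE$; this does not affect the argument.
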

\begin{proof}
Pick an irreducible component $\cC$ of $\cE$. By Proposition \ref{prop: image of irred comp in weight space}, we know that $w(\cC)$ contains integral weights and we choose one, $[k_0] \in w(\cC)$. Then choose a point $x \in w^{-1}([k_0])$. We have an affinoid open neighbourhood $x \in U \subset \cE$, with $w|_U$ finite flat and $w(U) = V$ a connected affinoid open neighbourhood of $[k_0]$. 

Consider the integral weights $[k]$ in $V$ with $k = k_0+(p-1)p^M$. The open neighbourhood $V$ contains all these weights for $M$ sufficiently large. By Coleman's classicality theorem (Theorem \ref{thm coleman-classical}), and the fact that function $U_p^{-1}$ on the affinoid $U$ is bounded, the fibre $w^{-1}([k])\cap U$ consists entirely of classical points when $M$ is sufficiently large. This gives a Zariski-dense family of integral weights $[k]$ in $V$, so the inverse image by the finite flat map $w$ is Zariski-dense in $U$ and hence in $C$\footnote{The finite flatness of the map tells us that the inverse image of a Zariski dense subset is Zariski dense. See \cite[Lemma 3.8.7]{eigenbook}. Actually, we don't need to use flatness of the map $w: \cE \to \cW$, it is enough to know that $w|_U$ maps irreducible components surjectively to $V$ \cite[Lemme 6.2.8]{chenevier-GLn}.}. The same argument gives the accumulation property for $Z$.
\end{proof}

\begin{rem}
	Note that the classical points are not dense for the usual topology on the adic space $\cE$. Moreover, integral weights are not dense in weight space for the usual topology. Identifying a component of weight space with the open unit disc, the integral weights each lie in one of the open affinoids $\{|t|_p = p^{-i}\}$ for $i \in \ZZ_{> 0}$. This means that we can easily write down an open subset of weight space which contains no integral weights --- for example, $\{ |t|^{-3/2}_p \le |t|_p \le p^{-4/3}\}$.
\end{rem}

With density of classical points proven, we can give an example of a geometric property of the eigencurve which is deduced from known properties of classical modular forms.

\begin{prop}\label{prop:reduced}
$\cE$ is reduced.
\end{prop}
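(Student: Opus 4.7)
The plan is to cover $\cE$ by affinoid pieces from the eigenvariety machine and reduce reducedness to a fibre-wise statement at classical integer weights, where I can invoke the structure of classical Hecke algebras on small-slope subspaces.

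First, I would recall from the proof of Proposition \ref{prop: ec equidim} that $\cE$ is covered by affinoid opens $V = \Spa(R,R^+)$ for which $w|_V : V \to U$ is finite, $U \subset \cW$ is a connected affinoid open, and $R$ is a sub-$\cO(U)$-algebra of $\End_{\cO(U)}(P)$ for a finite projective $\cO(U)$-module $P = M_{\kappa_U}^{\dagger,v}(N)^{\le h}$ (for a suitable slope bound $h$ and radius $v$). Since $U$ is smooth of dimension one, $\cO(U)$ is a Dedekind domain; as $R$ embeds in a projective $\cO(U)$-module, $R$ is itself $\cO(U)$-projective. It suffices to show each such $R$ is reduced.

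Next, I would analyse the fibres of $R$ at integer weights $[k] \in U$ with $k$ sufficiently large (say $k \ge 2h+2$). By Theorem \ref{thm coleman-classical}, $P \otimes_{\cO(U)} k([k])$ is the slope $\le h$ subspace of classical modular forms of weight $k$ and level $\Gamma_1(N)\cap\Gamma_0(p)$, and the image $R_k$ of $\TT^N$ in $\End(P \otimes k([k]))$ is the corresponding Hecke algebra. I would then show $R_k$ is reduced: the operators $\{T_\ell, S_\ell\}_{\ell \nmid Np}$ are simultaneously diagonalizable (via the Petersson inner product on cusp forms, and by inspection on Eisenstein series), and for $k \ge 2h+2$ the $U_p$-eigenvalues on the slope $\le h$ part are all distinct, since the two $p$-stabilizations of a newform of level $\Gamma_1(N)$ have $U_p$-valuations summing to $k-1 > 2h$, so at most one lies in slope $\le h$, while $p$-new forms of level $\Gamma_0(p)\Gamma_1(N)$ have slope $(k-2)/2 > h$ and do not contribute. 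Hence $R_k$ is a finite product of fields, which is reduced.

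Finally, I would conclude by Zariski density. Let $r \in R$ be nilpotent and view it as an endomorphism of $P$. For each $[k]$ as above, its image in $R_k$ is nilpotent, hence zero, so $r$ kills $P \otimes_{\cO(U)} k([k])$. After shrinking $U$ I may assume $P$ is a free $\cO(U)$-module, so $r$ is represented by a matrix with entries in $\cO(U)$ vanishing at each such $[k]$. By Theorem \ref{thm classical dense} the set of such weights is Zariski dense in $U$, forcing every entry — and thus $r$ — to vanish. The main obstacle is the semisimplicity claim for the classical Hecke action on the small-slope subspace, which requires some care around $p$-stabilizations; the remainder is a fairly standard spreading-out of reducedness from a Zariski dense collection of reduced fibres, made possible by $\cO(U)$-flatness of $R$ and the one-dimensionality of $\cW$.
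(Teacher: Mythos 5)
Your argument follows the same broad strategy as the paper (fibre the slope-$\le h$ Hecke algebra at integral weights, show semisimplicity there, propagate via Zariski density of integral weights and flatness over the Dedekind domain $\cO(U)$), and the semisimplicity analysis of the classical Hecke algebra is essentially correct. However, there is a genuine gap in the reduction to affinoid pieces. You assert that it suffices to show $R$ is reduced for each member $V = \Spa(R,R^+)$ of the eigenvariety-machine cover, and then argue using integral weights $[k] \in U$. But there is no guarantee that the image $U = w(V)$ in weight space contains \emph{any} integral weight, let alone a Zariski-dense set of them: as pointed out in the remark after Theorem~\ref{thm classical dense}, there are nonempty open subsets of $\cW$ (thin annuli in the open unit disc) that avoid every integral weight, and the slope-datum construction forces the $U$'s to be small in general, so some affinoids in the cover have image of this kind. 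For such $V$, your argument produces no dense family of reduced fibres and does not conclude. The paper circumvents this by instead proving that $\cO_{\cE,z}$ is reduced for each \emph{classical} point $z$ --- here one may choose the neighbourhood $V_{U,h}$ of $\pi(z)$ with $U$ a connected affinoid neighbourhood of the integral weight $w(z)$, which by the lemma in \S\ref{sec wtspace} does contain a Zariski-dense set of integral weights --- and then invokes density of classical points in all of $\cE$ (Theorem~\ref{thm classical dense}) together with the commutative-algebra lemmas \cite[Lemme 3.10, 3.11]{chenevier-JL} to deduce global reducedness. The relevant geometric input for that last step is that $\cE$ is finite flat over the pieces $\cO(U)$, hence Cohen--Macaulay of dimension one, hence $S_1$; combined with generic reducedness (from the classical points on each irreducible component) this gives reducedness. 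You gesture towards this (``spreading out'') but do not supply the step, and the argument as written would not cover the problematic $V$'s.

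Two smaller points: your bound $k \ge 2h+2$ only gives the $p$-new slope $(k-2)/2 \ge h$, not the strict inequality $> h$ you assert; take $k > 2h+2$, or argue as in the paper that $U_p$ is normal for the Petersson inner product on the $p$-new cuspidal subspace. Also, the reference to Theorem~\ref{thm classical dense} for ``the set of such weights is Zariski dense in $U$'' should instead be to the lemma on integral weights in \S\ref{sec wtspace}; Theorem~\ref{thm classical dense} concerns density of classical points in $\cE$, which is what the missing commutative-algebra step actually uses.
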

\begin{proof} We sketch the proof of this, following Chenevier \cite{chenevier-JL} and Bella\"{i}che \cite[Theorem 3.8.8]{eigenbook}. The key ingredient is that the local ring $\cO_{\cE,z}$ is reduced for $z \in Z$ a classical point. It then follows from density of classical points and some commutative algebra \cite[Lemme 3.10, 3.11]{chenevier-JL} that $\cE$ is reduced.

We prove the claim that $\cE$ is reduced in a neighbourhood of each classical point. For concreteness, we will use the notion of slope data \cite[\S1.3.4]{jl-heidelberg}. Let $z \in \cE$ be a classical point. By \cite[Corollary 3.20]{jl-heidelberg}, the point $\pi(z)$ of the spectral curve has an open neighbourhood $V_{U,h}$ for $U \subset \cW$ connected affinoid open and  $h \in \Q_{\ge 0}$. Let $\widetilde{V} = \pi^{-1}(V_{U,h})$. Then $\widetilde{V}$ is an open neighbourhood of $z$ in $\cE$ with 
\[\cO(\widetilde{V}) = \im\left(\cO(U)\otimes_{\Qp}\TT^{N} \to \End_{\cO(U)}(M^{\dagger,v}_{\kappa_U}(N)^{\le h})\right).\] Consider the integral weights $[k] \in U$ with $h < (k-1)/2$. They are Zariski dense in $U$ (since $U$ already contains the integral weight $k_0 = w(z)$). The image of $\cO(\widetilde{V})$ in $\End_{\Qp}(M^{\dagger}_{k}(N)^{\le h})$ is identified with \[\im\left(\TT^{N} \to \End_{\Qp}(M^{\dagger}_{k}(N)^{\le h})\right) = \im\left(\TT^{N} \to \End_{\Qp}(H^0(X_{\Qp},\omega^{k})^{\le h})\right)\] by the classicality theorem. We claim that this (classical) Hecke algebra is semisimple. The Hecke operators $T_l, S_l$ for $l\nmid pN$ act semisimply on the whole space $M_{k,N}$ --- this is a standard fact which follows from the normality of these Hecke operators with respect to the Petersson inner product on the cuspidal subspace (the Eisenstein series can be handled separated). 	We also need to know that $U_p$ acts semi-simply. On the $p$-new (cuspidal) subspace of $M_{k,N}$, the $U_p$-operator is normal with respect to the Petersson inner product. From the automorphic representations point of view, this is a consequence of the fact that unramified twists of the Steinberg representation of $\GL_2(\Qp)$ have a one-dimensional space of invariants under the Iwahori subgroup. Then we need to worry about the space of $p$-old forms, but that is why we chose the slope bound $h < (k-1)/2$. This means that each Hecke eigenform of level $\Gamma_1(N)$ contributes at most a one-dimensional space to $H^0(X_{\Qp},\omega^{k})^{\le h}$, so $U_p$ is forced to act semisimply. Compare with the proof of \cite[Theorem 4.2]{coleman-edixhoven}.

	Now we claim that $\cO(\widetilde{V})$ is reduced. Suppose $t \in \cO(\widetilde{V})$ with $t^n = 0$. We can identify $t$ with an endomorphism of the finite projective $\cO(U)$-module $P := M^{\dagger,v}_{\kappa_U}(N)$, and we know that $t$ maps to zero in $\End(P\otimes_{\cO(V)}k(\m))$ for a dense set of maximal ideals $\m \in \Spec(\cO(U))$ --- i.e.~those $\m$ corresponding to integeral weights $[k] \in U$ with $h < (k-1)/2$.  Extending by zero to a free module with $P$ as a direct summand, we can view $t$ as an endomorphism of a finite free $\cO(U)$-module, or in other words as a matrix. We've shown that each matrix entry lies in the intersection of a Zariski dense set of maximal ideals. Since $\cO(U)$ is reduced, this implies that each matrix entry is $0$, and hence $t=0$.
\end{proof}

Finally, we discuss Galois representations. For each classical point $x \in \cE$, there is a continuous semisimple Galois representation 
\[\rho_x : \Gal(\Qbar/\Q) \to \GL_2(k(x)) \]
with coefficients in the residue field $k(x)$ of the point $x$, with the properties:
\begin{itemize}
	\item $\rho_x$ is unramified at primes $l\nmid Np$,
	\item for $l\nmid Np$, the characteristic polynomial of $\rho_x(\Frob_l)$ is $X^2 - T_l(x)X + lS_l(x)$.
\end{itemize}
By the Chebotarev density theorem, specifying the characteristic polynomials of Frobenius elements determines the semisimple representation $\rho_x$ uniquely up to isomorphism. To make sense of the second property, recall that $T_l$ and $S_l$ can be regarded as elements of the ring of global functions on $\cE$ (indeed, there is a canonical map $\TT^N \to \cO^+_{\cE}(\cE)$, coming from the construction of $\cE$).

Interpolating the representations $\rho_x$ into a family of representations over $\cE$ turns out to be a little bit subtle, essentially because a (generically irreducible) family of representations can specialize to a non-semisimple representation. Something which is easier to do (and still captures a lot of information), is to interpolate the functions
\[T_x := \mathrm{trace}(\rho_x): \Gal(\Qbar/\Q)\to k(x)^+.\] 

\begin{prop}\label{prop:pseudocharacter}
There is a continuous function
\[T: \Gal(\Qbar/\Q) \to \cO^+_{\cE}(\cE),\] which factors through $G_{\Q,Np}$, the Galois group of the maximal extension of $\Q$ unramified at primes $l \nmid Np$, and satisfies $T(\Frob_l) = T_l$ for $l \nmid Np$.

We also have $T(\id) = 2$, $T(g_1 g_2) = T(g_2 g_1)$ and the identity: 
\begin{multline*}T(g_1)T(g_2)T(g_3) - T(g_1)T(g_2g_3) - T(g_2)T(g_1g_3) - T(g_3)T(g_1g_2) \\ + T(g_1g_2g_3) + T(g_1g_2g_3) = 0\end{multline*} for all $g_1, g_2, g_3 \in \Gal(\Qbar/\Q)$.
\end{prop}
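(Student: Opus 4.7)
The plan is to construct $T$ locally on an affinoid cover of $\cE$ by interpolating the traces $T_x$ at classical points, and then to glue, exploiting reducedness of $\cE$ (Proposition~\ref{prop:reduced}) and density of classical points (Theorem~\ref{thm classical dense}). At each $x \in Z$, the Galois representation $\rho_x$ has image in a compact subgroup of $\GL_2(k(x))$, hence, up to conjugation, in $\GL_2(k(x)^+)$. Its trace $T_x$ is therefore a continuous bounded pseudocharacter $G_{\Q,Np} \to k(x)^+$, and the three algebraic identities in the statement are a standard (polarised) consequence of Cayley--Hamilton for $2\times 2$ matrices, so they hold automatically at every classical point.

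Next, cover $\cE$ by the affinoid opens $\widetilde V = \pi^{-1}(V_{U,h}) = \Spa(A,A^+)$ coming from slope data, as used in the proof of Proposition~\ref{prop:reduced}. On each such $\widetilde V$ the classical locus $Z\cap \widetilde V$ is Zariski dense and $A$ is reduced. The task is to produce $T_A : G_{\Q,Np} \to A^+$ specialising to $T_x$ at every classical point. For $g = \Frob_l$ with $l\nmid Np$ the forced value is $T_A(g) = T_l \in A^+$. For general $g$, I would invoke the interpolation theory of $2$-dimensional pseudocharacters on reduced affinoid algebras with a dense set of classical points, as developed in Bella\"iche--Chenevier \cite[Ch.~7]{eigenbook}. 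Uniqueness of any such $T_A(g)$ is built in: by reducedness plus Zariski density of classical points, the evaluation map $A \hookrightarrow \prod_{x \in Z \cap \widetilde V} k(x)$ is injective, and $A^+$ is the preimage of $\prod_{x} k(x)^+$. Existence comes from approximating $g$ by Frobenius classes via Chebotarev and using $p$-adic completeness of $A^+$ together with uniform boundedness of the family $(T_x)$.

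The locally defined $T_A$ then glue: on overlaps, two candidates agree on a Zariski-dense classical locus of a reduced affinoid and therefore coincide. This produces a global function $T : G_{\Q,Np} \to \cO^+_{\cE}(\cE)$. The Frobenius compatibility $T(\Frob_l) = T_l$ and the three identities hold at every classical point, so by density and reducedness they hold as identities in $\cO^+_{\cE}(\cE)$. Continuity of $T$ follows from continuity of each $T_x$ modulo $p^n$ and completeness of $\cO^+_{\cE}(\cE)$ for the $p$-adic topology, and factorisation through $G_{\Q,Np}$ is automatic from the construction. The main obstacle is the local interpolation step: reducedness of $A$ is indispensable, since without it two genuinely distinct elements of $A^+$ could coincide on the dense classical locus and the canonical extraction of $T_A(g)$ from the pointwise data would fail. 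Everything else (gluing, continuity, verification of the algebraic identities) is a straightforward consequence once this interpolation is in hand.
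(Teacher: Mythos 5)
Your proposal follows essentially the same strategy as the paper: use Chebotarev density to reduce to the Frobenius elements (where the value is forced to be $T_l$), exploit reducedness of $\cE$ and Zariski density of classical points so that equalities can be checked pointwise, and use a completeness or closedness property to extract the value $T(g)$ for general $g$ as a limit. The paper is somewhat cleaner because it works globally: since $\cE$ is reduced, $\cO^+_{\cE}(\cE)$ is compact (Bella\"{\i}che--Chenevier, Lem.~7.2.11) and therefore closed inside $\prod_{z\in Z} k(x)^+$, so one concludes at once that $T$ lands in $\cO^+_{\cE}(\cE)$; this bypasses your local-to-global decomposition and the gluing step. Your intermediate claim that $A^+$ is exactly the preimage of $\prod_{x\in Z\cap \widetilde V} k(x)^+$ under the evaluation map is stronger than what is needed and not entirely obvious (the classical points are Zariski dense but not topologically dense, so pointwise boundedness on $Z$ alone does not immediately yield power-boundedness in $A$); the paper avoids this by knowing directly that $T(\Frob_l)=T_l$ already lies in $\cO^+_{\cE}(\cE)$ by construction, and using closedness only to pass from Frobenius elements to all of $G_{\Q,Np}$. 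Your argument would work with the same repair, and the remaining verifications (the algebraic identities, continuity, factoring through $G_{\Q,Np}$) are handled exactly as in the paper.
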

\begin{proof}
First we explain what topology we put on $\cO^+_{\cE}(\cE)$. We give it the subspace topology with respect to the following topology on $\cO_{\cE}(\cE)$: it is given the coarsest locally convex topology such that all the restriction maps $\cO_{\cE}(\cE) \to \cO_{\cE}(U)$, $U\subset \cE$ affinoid open, are continuous. Since $\cE$ is reduced, $\cO^+_{\cE}(\cE)$ is compact \cite[Lem.~7.2.11]{bellaiche_chenevier_pseudobook}. The functions $T_x$ give us a map $T: G_{\Q,Np} \to \prod_{z \in Z} k(x)^+$. The ring $\cO^+_{\cE}(\cE)$ injects into $\prod_{z \in Z} k(x)^+$, by Zariski-density of classical points. For each prime $l \nmid Np$, the image $T(\Frob_l)$ lands in the closed subring $\cO^+_{\cE}(\cE)$, so by Chebotarev density $T$ has image in $\cO^+_{\cE}(\cE)$. The identities written in the statement are satisfied by the character of a 2-dimensional representation, hence satisfied by $T_x$ for every classical point $x$. By the Zariski-density of classical points, $T$ satisfies the same identities. See \cite[Prop.~7.5.4]{bellaiche_chenevier_pseudobook} for more details. 
\end{proof}

\begin{remark}
	The identities in Proposition \ref{prop:pseudocharacter} mean (by definition) that $T$ is a 2-dimensional \emph{pseudocharacter}. When the coefficients are algebraically closed fields of odd characteristic, these pseudocharacters biject with isomorphism classes of 2-dimensional semisimple representations. See \cite{chenevier_det} for much more on this topic. 
\end{remark}

\subsection{Other topics}
Here are some brief comments on applications of families of overconvergent modular forms and the eigencurve:

\subsubsection{The infinite fern} If we have a Hecke eigenform $f$ of level $\Gamma_1(N)$, we get \emph{two} $U_p$-eigenforms at level $\Gamma_1(N)\cap\Gamma_0(p)$ whose Hecke eigenvalues away from $p$ match those of $f$. Gouv\^{e}a and Mazur observed that this makes the image of the eigencurve $\cE$ in a deformation space of Galois representations look like an `infinite fern' with fronds branching at points corresponding to Hecke eigenforms of level $\Gamma_1(N)$. They used this to prove density of points coming from modular forms in Galois deformation spaces, in some cases \cite{Maz97,GM-density}. This important result in the $p$-adic Langlands programme has since been generalized by B\"{o}ckle \cite{BigR=TBoeckle}, Chenevier \cite{Che11}, Breuil--Hellmann--Schraen \cite{Bre17}, Hellmann--Margerin--Schraen \cite{HMS-density} and Hernandez--Schraen \cite{hernandez-schraen}.

\subsubsection{$p$-adic $L$-functions}
The eigencurve provides a natural domain for families of $p$-adic $L$-functions for modular forms. This theory was initiated by Stevens, and you can read about it in Bella\"{i}che's Eigenbook \cite{eigenbook} (which is a great introduction to eigenvarieties).

\subsubsection{The eigencurve near the boundary of weight space}	
Coleman's remarkable `spectral halo' conjecture predicted that, after deleting the pre-image of closed subdiscs in the open unit discs making up $\cW$, the remainder of the eigencurve should break up as a disjoint union of finite flat covers of boundary annuli in $\cW$. This was proved in case $N=1, p=2$ by Buzzard and Kilford \cite{Buz05}. Liu, Wan and Xiao essentially proved Coleman's conjecture, working with the eigencurve for a definite quaternion algebra \cite{Liu17}\footnote{A proof for the full Coleman--Mazur eigencurve appeared very recently \cite{Diao-Yao}.}. Ye has proved some interesting results in higher dimension, where the precise picture is still quite unclear \cite{Ye-slopes}. The simple geometry of the eigenvariety in these boundary regions can be useful: Thorne and I use the Buzzard--Kilford result in our proof of symmetric power functoriality for modular forms \cite{NewTho}.

\section{An overview of constructions of eigenvarieties}\label{sec:overview}
In the previous section we have described how the eigenvariety machine explained in \cite{jl-heidelberg} can be applied to construct the eigencurve. We can ask how this generalizes to other contexts. The goal is to define adic spaces which $p$-adically interpolate the systems of Hecke eigenvalues attached to (a certain class of) automorphic representations of $G(\AA_{\Q})$, where $G$ is a fixed reductive group defined over $\Q$. 

In the next section, we give some details of the case where $G$ is given by the units in a definite quaternion algebra $D/\Q$. It is also instructive to consider the case $G = \mathrm{Res}_{F/\Q}\GL_1$ for a number field $F$, where the $\Spa(R,R^+)$-points of the eigenvariety with tame level a compact open subgroup $U^p \subset \AA_{F,f}^p$ correspond to continuous homomorphisms
\[F^\times \backslash \AA_F^\times /U^p \to R^\times.\] 
Both these cases were worked out in \cite{Buz04}.

In general, different methods to construct eigenvarieties are available depending on the automorphic context. The following list collects together some examples in the literature, grouped in terms of methods. We have omitted a discussion of the significant literature (much of it due to Hida) about the \emph{ordinary} or slope $0$ case.

\subsection{Coherent cohomology of Shimura varieties}
As we have seen, the Coleman--Mazur eigencurve was constructed using overconvergent modular forms, which are spaces of sections of line bundles on certain adic subspaces of modular curves. 

In contexts where we can associate a Shimura variety to the reductive group $G$, we can consider the coherent cohomology of automorphic vector bundles on this variety and try to generalize Coleman's theory.  The fact that Coleman relied on the $p$-adic family of Eisenstein series to define families of overconvergent modular forms was initially an obstruction to this, which was only overcome when more conceptual definitions of these families were discovered. Here are some of the works which construct eigenvarieties using coherent cohomology of Shimura varieties:

\begin{itemize}
	\item \cite{Col98, Buz07} (modular forms)
	\item \cite{kisin-lai,ais-hilbert, aip-hilbert} (Hilbert modular forms)
	\item \cite{aip-siegel} (Siegel modular forms)
	\item \cite{brasca, hernandez} (PEL Shimura varieties)
	\item \cite{aip-halo} (extends the eigencurve to a mixed characteristic analytic adic space)
	\item \cite{bp-higher} (works with coherent cohomology groups of all degree, not just $H^0$)	
\end{itemize}

We must add that when developing this theory, one also wants to generalize Coleman's classicality theorem. This has its own extensive literature.

\subsection{Singular cohomology of locally symmetric spaces} 
The Eichler--Shimura isomorphism tells us that we can find systems of Hecke eigenvalues for modular forms of weight $k \ge 2$ in the singular cohomology of modular curves, with coefficients in a local system depending on $k$. Stevens initiated the theory of \emph{overconvergent modular symbols} or \emph{overconvergent cohomology}, defining Banach spaces with a compact action of a $U_p$ operator by taking cohomology of modular curves with coefficients in a local system of Banach spaces. These can be used to construct eigenvarieties.

A more representation-theoretic approach, still based on singular cohomology, was developed by Emerton. Here, one key insight is that Hecke eigenforms which are `finite slope' at $p$ (i.e.~have a non-zero $U_p$-eigenvalue) have associated automorphic representations whose local factor $\pi_p$ admits a non-zero Jacquet module. Since we only considered modular forms of level $\Gamma_0(p)$ at $p$ in these notes, this condition was automatic (the local factors $\pi_p$ are either unramified principal series or unramified twists of Steinberg). Emerton's completed cohomology and locally analytic Jacquet functor gives another approach to constructing eigenvarieties.

In cases where our reductive group $G$ has $G(\RR)$ compact (or compact modulo centre), this theory simplifies since the relevant locally symmetric spaces are zero-dimensional. Examples include when $G$ is a definite unitary group or units in a definite quaternion algebra.

Here are some references which work with singular cohomology of locally symmetric spaces to construct eigenvarieties:

\begin{itemize}
	\item \cite{ste,Bel12,eigenbook} (overconvergent modular symbols and the eigencurve)
	\item \cite{Buz04,Buz07,chenevier-GLn,loeffler-ocaaf} (overconvergent cohomology in the zero-dimensional case)
	\item \cite{as,Han17,urban-eigen,xiang-full,JN-extended, BS-Williams} (overconvergent cohomology in general)
	\item \cite{MR2207783,hill-loeffler,breuil-ding} (Emerton's construction and generalizations)
	\item \cite{tarrach} (a different approach, can be compared with both overconvergent cohomology and Emerton's construction)
\end{itemize}

\subsection{Eigenvarieties and Galois deformation spaces}
In Proposition \ref{prop:pseudocharacter}, we observed a connection between eigenvarieties and $p$-adic families of Galois representations (or Galois pseudocharacters). This was already present in Coleman and Mazur's paper \cite{Col98}, and was used by them to give an alternative Galois theoretic construction of the eigencurve (although many of the good properties of the eigencurve are proved using its automorphic construction). 

The close relationship between eigenvarieties and Galois deformation spaces has been refined and generalized. In particular work of Kisin \cite{kisin-ocfmc} led to identifying the condition on the Galois side (\emph{trianguline} \cite{colmez-tri})  which, at least conjecturally, gives a precise description of the Galois representations coming from points of eigenvarieties. See for example \cite[\S6]{Han17}. Some more references are \cite{bellaiche_chenevier_pseudobook,hellmann2012families,Ked14,liu-triangulation,breuil-ding,Bre17}.

\subsection{Comparing different constructions of eigenvarieties}
Often in one automorphic setting there is more than one method available to construct an eigenvariety. For example, we can construct an eigencurve using overconvergent modular forms, but also using overconvergent modular symbols (see \cite{eigenbook}, for example). We would like to know that the resulting eigenvariety is independent of the way it is constructed. This can often be verified, using the density of classical points and the fact that we can identify the subsets of classical points the different eigenvarieties when they arise from the same collection of automorphic representations. Examples of precise statements can be found in \cite[\S 7.2]{eigenbook}, \cite[Proposition 7.2.8]{bellaiche_chenevier_pseudobook}, \cite[Theorem 5.1.2]{Han17}, \cite[Theorem 3.2.1]{JN-interp}, \cite[Theorem 5.18]{jl-heidelberg}. 

These results allow us to compare different kinds of eigenvarieties, but they do not produce maps between different kinds of spaces of overconvergent automorphic forms. To do the latter is more delicate --- for example, see \cite{AIS-ocES,CHJ,JERC} for results comparing overconvergent modular forms and overconvergent modular symbols. 

\section{Eigencurves for definite quaternion algebras} \label{sec:defquat}
\subsection{Automorphic forms for definite quaternion algebras}
In this section, we will explain the theory of overconvergent automorphic forms in a simpler context, where the definition of families and the proof of `small slope implies classical' will be relatively straightforward. We essentially follow Buzzard's article \cite{Buz04}, which was inspired by Stevens's overconvergent modular symbols. Our presentation has also been influenced by Chenevier's definition of overconvergent automorphic forms for definite unitary groups \cite{chenevier-GLn}.

We will work with a definite quaternion algebra $D/\Q$, so $D\otimes_{\Q}\R$ is isomorphic to Hamilton's quaternions $\mathbb{H}$. The discriminant $\delta_D$ of $D$ will be a product of an odd number of distinct primes, determined by: $D\otimes_{\Q}\Q_l \cong M_2(\Ql)$ for $l \nmid \delta_D$. When $l|\delta_D$, $D\otimes_{\Q}\Q_l$ is a division algebra of dimension $4$ over $\Q_l$. We fix once and for all a maximal order $\cO_D$ in $D$, and for each prime $l \nmid \delta_D$ an isomorphism $\cO_D\otimes_{\ZZ}\Zl \cong M_2(\Zl)$.

We can define an algebraic group $G/\Q$ with points in a $\Q$-algebra $R$ given by $G(\Q) = (D\otimes_{\Q}R)^\times$. This is an inner form of $\GL_2$, in particular the extension of scalars $G_{\Qbar} \cong \GL_2$. The theory of automorphic forms for $G$ is very simple, because $G(\R) \cong \mathbb{H}^\times$ is compact modulo centre, and hence its irreducible unitary representations are finite-dimensional. 

We fix a prime $p \nmid \delta_D$ and an isomorphism $D\otimes_{\Q}\Q_p \cong M_2(\Qp)$, which we use to identify $G(\Qp)$ and $\GL_2(\Qp)$. An important role will be played by the Iwahori subgroup $\Iw_p \subset \GL_2(\Zp)$ of matrices which are upper triangular mod $p$. This is the local counterpart to the congruence subgroup $\Gamma_0(p)$.

Our spaces of classical and $p$-adic automorphic forms will all be defined using the following general construction:

\begin{defn}
	Let $K = \prod_{l}K_l$ be a compact open subgroup of $G(\A_f)$ with $K_p \subset \Iw_p$. Write $K^p$ for the product $\prod_{l \ne p}K_l \subset G(\A_f^{(p)})$. 
	
	Let $A$ be an abelian group with a linear left action of $\Iw_p$. We define
	
	\[\cL(K,A) := \{f: G(\Q)\backslash G(\A_f) \to A \mid f(gk) = k_p^{-1}f(g) \forall k \in K\}.\]
\end{defn}

We can view $\cL(K,A)$ as the $K$-invariants in the space of functions \[f: G(\Q)\backslash G(\A_f) \to A,\] equipped with the left action of $G(\A_f^{(p)}) \times \Iw_p$ as follows: \[(\gamma\cdot f)(g) = \gamma_pf(g\gamma)\]

As usual, if $g$ is an adelic thing, we write $g_p$ for its component at $p$.

\begin{prop}\label{prop:cosets}
	The set $G(\Q)\backslash G(\A_f)/K$ is finite. Choose a set of double coset representatives $(g_i)_{i=1}^d \subset G(\A_f)$ for this set. The map	
	\begin{align*} \cL(K,A) &\to A^{\oplus d} \\ f &\mapsto f(g_i)\end{align*} defines an isomorphism $\cL(K,A) \cong \bigoplus_{i=1}^{d} A^{\Gamma_i}$, where $\Gamma_i = K\cap\left(g_i^{-1}G(\Q)g_i\right)$.
	
	The groups $\Gamma_i$ are finite.
\end{prop}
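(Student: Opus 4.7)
The plan is to treat the three assertions in sequence: finiteness of the double coset space, the bijection with $\bigoplus_i A^{\Gamma_i}$, and finiteness of the $\Gamma_i$. The bijection itself is essentially formal once the doubles cosets are in hand; the real content lives in the two finiteness statements, both of which rest on the fact that $G(\R) = (D\otimes_\Q\R)^\times$ is compact modulo its centre.

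For the finiteness of $G(\Q)\backslash G(\A_f)/K$, I would appeal to the classical finiteness of the class number of $K$-ideals in a definite quaternion order: the product $K_\infty\cdot K \subset G(\R)\times G(\A_f)$, where $K_\infty$ is any maximal compact subgroup of $G(\R)/Z(G)(\R)$, has finite covolume in $G(\A)/Z(G)(\A)$, and strong approximation considerations (or a direct reduction-theoretic argument using that $G(\R)/Z(G)(\R)$ is compact) show that the set of double cosets is finite. I would quote this rather than redoing it; it appears e.g.\ in Vign\'eras' book on quaternion algebras.

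Having fixed representatives $g_1,\ldots,g_d$ and observing that $G(\A_f) = \bigsqcup_i G(\Q) g_i K$, I define the candidate inverse by $f(qg_ik) := k_p^{-1}a_i$ for a prescribed $(a_i)\in\bigoplus_i A^{\Gamma_i}$. Well-definedness amounts to this: if $qg_ik = q'g_ik'$, then $\gamma := k(k')^{-1} \in K$ satisfies $g_i\gamma g_i^{-1} = q^{-1}q'\in G(\Q)$, so $\gamma\in\Gamma_i$, and the required identity $k_p^{-1}a_i = (k'_p)^{-1}a_i$ rearranges to $\gamma_p\cdot a_i = a_i$, which holds by assumption. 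Conversely, given $f\in\cL(K,A)$, the $i$th component $f(g_i)$ is fixed by $\Gamma_i$: for $\gamma\in\Gamma_i$ we write $g_i\gamma g_i^{-1} = q\in G(\Q)$ and compute
\[
f(g_i) = f(qg_i) = f(g_i\gamma) = \gamma_p^{-1}f(g_i),
\]
using left $G(\Q)$-invariance and the transformation rule under $K$ in turn. These two constructions are evidently inverse on each double coset, proving the claimed isomorphism.

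The hard part will be the finiteness of $\Gamma_i$. The key point is that $\Gamma_i \subset K$ sits inside a compact open subgroup of $G(\A_f)$, and $g_i\Gamma_ig_i^{-1}\subset G(\Q)$ is therefore a set of rational points lying in the compact open $g_iKg_i^{-1}\subset G(\A_f)$. The reduced norm of any such element lies in $\Q^\times\cap\widehat{\Z}^\times = \{\pm 1\}$ (up to shrinking $K$ at a sufficiently large prime we may arrange this cleanly); once the reduced norm is bounded, the elements lie in a lattice inside $D\otimes\R$, and the intersection of this lattice with the compact set $\{x\in (D\otimes\R)^\times : \Nrd(x)=\pm 1\}\cdot Z(G)(\R)_{\mathrm{bounded}}$ is finite because $G(\R)/Z(G)(\R)$ is compact. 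This finiteness argument is standard for definite quaternion algebras and I would state it, citing a reference such as Vign\'eras or \cite{Buz04}, rather than carry out the full estimate.
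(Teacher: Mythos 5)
Your proposal is correct and follows essentially the same route as the paper, which gives the bijection as an exercise, cites Borel (rather than Vign\'eras) for the finiteness of the double coset space, and justifies the finiteness of $\Gamma_i$ with the one-line remark that it is the intersection of a compact and a discrete subgroup of $G(\A)$. Your more detailed treatment of the finiteness of $\Gamma_i$ is welcome: the paper's slogan requires one to observe that the archimedean component is controlled, and the reduced-norm computation you carry out (together with definiteness to pass from $\Nrd = \pm 1$ to $\Nrd = 1$ and compactness of the norm-one sphere in $\mathbb{H}$) is precisely what makes ``compact meets discrete'' literally true. Two small remarks: the shrinking of $K$ at a large prime is unnecessary --- any compact subgroup of $(D\otimes\A_f)^\times$ has reduced norm landing in $\widehat{\Z}^\times$, since a compact subgroup of $\Q_l^\times$ sits inside $\Z_l^\times$; and the appeal to ``strong approximation considerations'' for the finiteness of the double coset set is a bit of a red herring here (strong approximation at $\infty$ actually \emph{fails} for $D^1$ when $D$ is definite, as $D^1(\R)\cong\mathrm{SU}(2)$ is compact). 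Since you are only citing Vign\'eras or Borel for that step anyway, this does not affect the correctness of the argument.
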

\begin{proof}
	Finiteness of $G(\Q)\backslash G(\A_f)/K$ is a generalisation of finiteness of class groups; it was proved for general algebraic groups $G/\Q$ by Borel (using work with Harish-Chandra) \cite{borel-finite}. Checking that the map is an isomorphism is an exercise. The groups $\Gamma_i$ are finite because they are intersections of a compact and a discrete subgroup of $G(\A)$.
\end{proof}
If we try to use this setup for $G = \GL_2$, or for $G$ coming from an \emph{indefinite} quaternion algebra, we will not get anything interesting.  That's because $\SL_2$, or the reduced norm $1$ subgroup of $G$ in the indefinite case, satisfies strong approximation away from $\infty$: the group $\SL_2(\Q)$ is dense in $\SL_2(\A^\infty)$. So a function in $\cL(K,A)$ will be forced to factor through the determinant or reduced norm map to $\GL_1(\AA_f)$.

We can equip the spaces $\cL(K,A)$ with double coset operators: if $g\in G(\A_f^{(p)})$, write the double coset $KgK$ as a disjoint union of finitely many left cosets \[Kg K = \coprod_{i=1}^{d_g} g_iK\] and define \[ [KgK]f = \sum g_i\cdot f.\]

\begin{exercise}
	Check that the formula for the double coset operator $[KgK]$ does indeed define an endomorphism of $\cL(K,A)$. 
\end{exercise}

When $l\nmid \delta_Dp$, we let $t_l = \left(\begin{smallmatrix}
	l & 0 \\ 0 & 1
\end{smallmatrix}\right)$ and $s_l = \left(\begin{smallmatrix}
	l & 0 \\ 0 & l
\end{smallmatrix}\right)$, which we can view as elements of $G(\Ql)$. Supposing $K_l = \GL_2(\Zl)$, we define Hecke operators $T_l = [Kt_lK]$, $S_l = [Ks_lK]$. 
When the $\Iw_p$-action on $A$ extends to a monoid containing $u_p:=\left(\begin{smallmatrix}
	p & 0 \\ 0 & 1
\end{smallmatrix}\right)$, we can also define $U_p = [Ku_pK]$. 

Here is an important example of a choice of $A$: 

\begin{defn}
	If $a \ge b$ are integers, we consider the representation $V_{a,b} = \Sym^{a-b}\Qp^{{2}}\otimes\det^b$ of $\GL_2(\Qp)$, which acts by the standard representation on $\Qp^2$.
\end{defn}

The representation $V_{a,b}$ is the algebraic representation of highest weight $(a,b)$. We can now compare with spaces of modular forms. Fix a level $N$, coprime to $\delta_Dp$. We set $K^p = K_1(N)$ and $K_p = \Iw_p$, where $K_1(N)$ is the compact open subgroup of $\left(\cO_D\otimes_\Z \widehat{\Z}\right)^\times$ defined by $\left\{g \bmod N \in \left(\begin{smallmatrix}
	* & * \\ 0 & 1
\end{smallmatrix}\right) \subset \GL_2((\Z/N\Z)^\times)\right\}.$ 

\begin{theorem}[Jacquet--Langlands, see \cite{DT}]\label{thm:JL}
	Fix an integer $k \ge 2$ and an embedding of (abstract) fields $\iota: \Qp \hookrightarrow \C$. There is a map
	\[\JL:\cL(K,V_{k-2,0})\otimes_{\iota}\C \to S_k(\Gamma_1(N)\cap\Gamma_0(\delta_Dp))\] to the space of cuspidal modular forms of weight $k$, with the following properties:
	\begin{itemize}
		\item $\JL$ is equivariant for the Hecke operators $T_l, S_l, U_p$.
		\item $\JL$ is injective when $k > 2$. When $k = 2$, the kernel is given by functions which factor through the reduced norm $G(\A_f) \to \A_f^\times$.
		\item The image of $\JL$ is the modular forms which are `$\delta_D$-new'.
	\end{itemize}  
\end{theorem}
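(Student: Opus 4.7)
The plan is to deduce this from the Jacquet--Langlands correspondence, which is what the citation \cite{DT} supplies, so the task is mainly to translate the statement into a form where JL applies and then to match up the local data. First, I would unwind $\cL(K,V_{k-2,0})\otimes_\iota\C$ as the $K \times (\mathbb{H}^\times/\R^\times)$-invariants of the space of automorphic forms on $G$ of a fixed infinitesimal character: namely, using $\iota$ and the fact that $V_{k-2,0}\otimes_\iota \C$ is the irreducible representation of $G(\R)\cong\mathbb{H}^\times$ of dimension $k-1$, the space $\cL(K,V_{k-2,0})\otimes_\iota \C$ is isomorphic to $\Hom_{G(\R)}(V_{k-2,0}\otimes_\iota\C, \cA(G))^K$, where $\cA(G)$ is the space of automorphic forms on $G(\Q)\backslash G(\A)$. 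Decomposing $\cA(G)$ into $G(\A)$-isotypic components $\bigoplus_\pi m(\pi)\pi$ gives a decomposition $\cL(K,V_{k-2,0})\otimes_\iota\C = \bigoplus_\pi m(\pi)\cdot \pi_f^K$, summed over those $\pi$ whose archimedean component is $V_{k-2,0}\otimes_\iota\C$.

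Next I would apply Jacquet--Langlands: each such $\pi$ on $G$ corresponds to an automorphic representation $\pi' = \JL(\pi)$ of $\GL_2(\A)$ which at the archimedean place is the discrete series of weight $k$ (or a limit of discrete series when $k=2$, giving weight $2$ holomorphic forms); at primes $l\mid \delta_D$, $\pi'_l$ is either Steinberg-type or supercuspidal, i.e.~it is in the image of the local JL map from $G(\Ql)$; at all other finite places (including $p$) we have $\pi'_l\cong \pi_l$. The corresponding forms on $\GL_2$ are cuspidal modular forms of weight $k$ (exactly because at infinity they are in discrete series) and the dimension of $\pi_f'^{K^p\times\Iw_p}$ cut out of $S_k(\Gamma_1(N)\cap\Gamma_0(\delta_Dp))$ agrees with $\dim\pi_f^K$ because, for $l\mid \delta_D$ and $\pi_l$ a character of $G(\Q_l)\cong D_l^\times$, the fixed vectors under the maximal order $\cO_{D,l}^\times$ form a line, matched by $\pi_l'$ being either an unramified twist of Steinberg (with a line of Iwahori fixed vectors) or non-trivial on the maximal order (but then $\pi_l^{\cO_{D,l}^\times} = 0$ and similarly $\pi_l'^{\mathrm{GL}_2(\Z_l)} = 0$, i.e.~it is $\delta_D$-new of the right conductor). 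This gives both the image description (modular forms that are $\delta_D$-new) and the isomorphism (not merely injection) onto that image.

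To finish, I would handle Hecke equivariance and the $k=2$ kernel. Hecke equivariance at $l\nmid Np\delta_D$ follows because the double coset operators $[Kt_lK]$ and $[Ks_lK]$ act on $\pi_l^{\GL_2(\Zl)}$ by the Satake parameters, which are matched by JL with the Satake parameters of $\pi_l' = \pi_l$, and similarly for $U_p$ using that $\pi_p$ and $\pi_p'$ coincide and the double coset $[\Iw_p u_p \Iw_p]$ is computed by the same coset decomposition on both sides. For the kernel at $k=2$, I would observe that $V_{0,0}$ is the trivial representation of $G(\R)$, so the automorphic representations contributing include one-dimensional representations of $G(\A)$ that factor through the reduced norm $\Nrd: G\to \Gm$; these are exactly the obstruction to cuspidality on the $\GL_2$ side (they map to zero under JL, or equivalently to Eisenstein series that do not live in $S_2$), and for $k>2$ they cannot occur because $V_{k-2,0}$ is non-trivial at infinity.

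The main obstacle is, strictly speaking, already packaged into the JL correspondence itself, so the real work in a detailed write-up is the bookkeeping of local components at the ramified primes --- making sure the $\cO_{D,l}^\times$-fixed vectors really do correspond to $\delta_D$-new vectors on $\GL_2$ --- together with the careful identification of the weight $k$ discrete series with $V_{k-2,0}$ via the fixed embedding $\iota$; I would cite \cite{DT} for the hardest step and spell out the local matching only where the statement of the theorem requires it.
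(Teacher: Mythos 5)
The paper does not prove this theorem; it states it as a citation to Diamond--Taylor and then records, for readers fluent in automorphic representations, the decomposition
\[\cL(K,V_{k-2,0})\otimes_{\iota}\C \cong \bigoplus_{\pi} (\pi^\infty)^K, \qquad \pi_\infty \cong V_{k-2,0}^\vee\otimes_{\iota}\C,\]
which is precisely the first step of your outline. So you are following the same route the paper gestures at, and the overall structure --- decompose into isotypic pieces, apply global JL, match local fixed vectors at $l\mid\delta_D$, treat Hecke equivariance place by place, isolate the one-dimensional contributions at $k=2$ --- is correct. Two small points to tighten. First, you write the sum over $\pi$ with $\pi_\infty \cong V_{k-2,0}\otimes_{\iota}\C$, but unwinding the definition of $\cL(K,-)$ (the $K$-equivariance is $f(gk)=k_p^{-1}f(g)$) gives $\Hom_{G(\R)}(V_{k-2,0}^\vee\otimes_\iota\C,\cA(G))$, so the archimedean component should be the \emph{dual} $V_{k-2,0}^\vee\otimes_{\iota}\C$; these differ by a determinant twist and hence in central character, so the dual really does matter for pinning down which $\pi$ occur. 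Second, your parenthetical ``exactly because at infinity they are in discrete series'' is not the right reason for cuspidality on the $\GL_2$ side: one-dimensional automorphic representations of $G(\A)$ also transfer under JL, but to one-dimensional (residual) representations of $\GL_2(\A)$, which are discrete but not cuspidal. What actually forces cuspidality of $\JL(\pi)$ is that $\pi$ is not one-dimensional; for $k>2$ this is automatic since $\dim V_{k-2,0}>1$, and for $k=2$ it is exactly the content of your kernel description. With these two fixes, your local bookkeeping at $l\mid\delta_D$ (unramified characters of $D_l^\times$ matched to unramified twists of Steinberg, both contributing a line of Iwahori-fixed vectors but no hyperspecial-fixed vectors) correctly yields the $\delta_D$-new image, and the Hecke equivariance at $l\nmid\delta_D$ (including $p$) is immediate since $\pi'_l\cong\pi_l$ there.
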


The spaces $\cL(K,V_{k-2,0})$ therefore give us a very concrete way to explore the Hecke eigenvalues of modular forms. For those familiar with automorphic representations, there is a Hecke-equivariant isomorphism
\[\cL(K,V_{k-2,0})\otimes_{\iota}\C \cong  \oplus_{\pi} (\pi^\infty)^K,\] where the sum is over automorphic representations $\pi$ of $G(\A)$ with $\pi_\infty \cong V_{k-2,0}^\vee\otimes_{\iota}\C$. Here we are viewing the representation $V_{k-2,0}^\vee\otimes_{\iota}\C$ of $\GL_2(\C)$ as a representation of $G(\R)$ using the embedding $G(\R) \subset G(\C) \cong \GL_2(\C)$. This is explained carefully in \cite[\S2]{DT}.

\subsection{Overconvergent automorphic forms for definite quaternion algebras}\label{ssec:ocquat}
If we want to define families of $p$-adic automorphic forms, we need to replace $V_{k-2,0}$ with a family of Banach modules over the weight space $\cW = \Hom(\Zp^\times,\mathbb{G}_m)$, whose specializations at integral weights $[k]$ contain $V_{k-2,0}$. 

We can intepret $V_{k-2,0}$ as an algebraic induced representation $\Ind_{B}^{\GL_2}\lambda_{0,k-2}$, where $\lambda_{a,b}(\mathrm{diag}(t_1,t_2)) = t_1^at_2^b$ denotes a weight for the diagonal torus $T$ in $\GL_2$. The weight $\lambda_{0,k-2}$ is anti-dominant with respect to the upper triangular Borel subgroup $B$, and is the lowest weight of the representation $V_{k-2,0}$.

\begin{defn}
	\begin{align*}\Ind_{B}^{\GL_2}\lambda_{0,k-2} &= \{f \in \cO(\GL_{2,\Qp})\mid f(tng) = t_2^{k-2}f(g) \\ &\quad\quad\text{ for all }t=\mathrm{diag}(t_1,t_2)\in T, n \in N.\}\end{align*}
\end{defn}
The action of $\GL_2(\Qp)$ is by right translation. When we write $\cO(\GL_{2,\Qp})$ we mean the algebraic functions on this algebraic group. In particular, when we restrict to the strictly lower-triangular subgroup $\overline{N}$ we get polynomials in one variable.
\begin{exercise}\label{ex:induction}
	Show that restricting functions to $\overline{N}(\Qp) = \left\{\left(\begin{smallmatrix}
		1 & 0 \\ x & 1
	\end{smallmatrix}\right) : x \in \Qp\right\}$ gives an isomorphism between $\Ind_{B}^{\GL_2}\lambda_{0,k-2}$ and polynomials in $\Qp[x]$ of degree $\le k-2$, with action of $\GL_2(\Qp)$ given by:
	
	\[\left(\left(\begin{smallmatrix}
		a & b \\ c & d
	\end{smallmatrix}\right)f\right)(x) = (bx+d)^{k-2}f((ax+c)/(bx+d)).\]
	
\end{exercise}

We want to enlarge our representation space $V_{k-2,0}$ so that it will vary nicely in a family of weights. Motivated by Exercise \ref{ex:induction}, we will do this using functions defined on lower unipotent matrices.

\begin{defn}
	Write $\overline{N}_1$ for the subgroup \[p\Zp\cong\begin{pmatrix}
		1 & 0\\ p\Z_p & 1
	\end{pmatrix} \subset \overline{N}(\Qp).\]
	For $v \in \Z_{\ge 0}$, write $\cC^{v}_{\mathrm{an}}(\overline{N}_1,\Qp)$ for the continuous functions $f:\overline{N}_1 \to \Qp$ which are \emph{locally analytic with radius of analyticity $p^{-v}$}. 
	
	This means that the restriction of $f$ to $p(a+p^v\Zp)$ is given by a convergent power series $f(p(a+p^vz)) = f_a(z) \in \Qp\langle z \rangle$ for all $a \in \Zp$. When $v = 0$, our functions are defined by a single power series.
	
	$\cC^{v}_{\mathrm{an}}(\overline{N}_1,\Qp)$ is a $\Qp$-Banach space with the supremum norm. Note that this supremum norm can also be described as $\sup_{a \in \Z/p^v\Z}|f_a|$ for the Gauss norm on $\Qp\langle z \rangle$. 
\end{defn}

We can think of $\cC^{v}_{\mathrm{an}}(\overline{N}_1,\Qp)$ as the ring of functions on the disjoint union of closed discs $\mathbb{B}_v:=\coprod_{a \in \Z/p^v\Z}\{|x-pa| \le |p|^{1+v}\}$. 

We then define locally analytic induced representations of $\Iw_p$:

\begin{defn}
	For integers $k_1, k_2$, define \begin{align*}\cA^v_{k_1,k_2} &= \Ind_{B(\Zp)}^{\Iw_p}(\lambda_{k_2,k_1})^{v-an} := \{f: \Iw_p \to \Qp \mid f(tng) = \lambda_{k_2,k_1}(t)f(g) \\&\text{for all }t \in T(\Zp), n \in N(\Zp), \text{and }f|_{\overline{N}_1} \in \cC^{v}_{\mathrm{an}}(\overline{N}_1,\Qp)\}.\end{align*}
\end{defn} 
As usual, $\Iw_p$ acts on these functions by right multiplication. When $k_1 \ge k_2$, restriction of functions to $\Iw_p$ defines an injective map 
\begin{align*}
	V_{k_1,k_2} &\hookrightarrow \cA^v_{k_1,k_2}\\
	\Ind_{B}^{\GL_2}\lambda_{k_2,k_1} & \hookrightarrow \Ind_{B(\Zp)}^{\Iw_p}(\lambda_{k_2,k_1})^{v-an} 
\end{align*}

\begin{exercise}
	Using the Iwahori decomposition $\Iw_p = B(\Zp)\overline{N}_1$, show that: \begin{enumerate}
		\item Restriction to $\overline{N}_1$ defines an isomorphism $\cA^v_{k_1,k_2} \cong \cC^{v}_{\mathrm{an}}(\overline{N}_1,\Qp)$. 
		\item Applying the isomorphism in the previous part, the action of $\Iw_p$ on $\cC^{v}_{\mathrm{an}}(\overline{N}_1,\Qp)$ is given by
		\[\left(\left(\begin{smallmatrix}
			a & b \\ c & d
		\end{smallmatrix}\right)f\right)(x) = (ad-bc)^{k_2}(bx+d)^{k_1-k_2}f((ax+c)/(bx+d)).\] Check that this makes sense, using the fact that $x \mapsto (ax+c)/(bx+d)$ defines a map of adic spaces  $\mathbb{B}_v \to \mathbb{B}_v$. 
	\end{enumerate}
\end{exercise}

\begin{defn}
	The space of $v$-overconvergent automorphic forms of weight $(k_1,k_2)$ and level $K$ is defined to be:
	$S^{\dagger,v}_{k_1,k_2}(K) := \cL(K,\cA^v_{k_1,k_2})$. 
	
	Using the isomorphism $\cL(K,\cA^v_{k_1,k_2}) \cong \bigoplus_{i=1}^{d} (\cA^v_{k_1,k_2})^{\Gamma_i}$, we can make $S^{\dagger,v}_{k_1,k_2}(K)$ into a $\Qp$-Banach space.
\end{defn}
The space $S^{\dagger,v}_{k_1,k_2}(K)$ is equipped with Hecke operators $T_l, S_l$ for $l \nmid \delta_DpN$, given by double coset operators. 
We need to extend the action of $\Iw_p$ on $\cA^v_{k_1,k_2}$ in order to define a Hecke operator $U_p$. 

When $k_1 \ge k_2$, let's think about the embedding $V_{k_1,k_2} \hookrightarrow \cA^v_{k_1,k_2}$ and the action of an element $\gamma=\left(\begin{smallmatrix}
	p^i & 0 \\ 0 & p^j
\end{smallmatrix}\right)$. For the $\gamma$-action on $V_{k_1,k_2}$ to have a hope of extending nicely to $\cA^v_{k_1,k_2}$, we need $\gamma^{-1}\overline{N}_1\gamma \subset \overline{N}_1$. This ensures that $(\gamma f)|_{\overline{N}_1}$ is determined by $f|_{\overline{N}_1}$. This condition is equivalent to $i \ge j$, so let's assume that.

Now, thinking of $V_{k_1,k_2}$ as polynomial functions on $\overline{N}_1$ of degree $\le k_1-k_2$, how does $\gamma$ act? We get $(\gamma f)(x) = p^{ik_2 + jk_1}f(p^{i-j}x)$.

To get an action which interpolates nicely as we vary $(k_1,k_2)$ $p$-adically, we are going to need to remove that power of $p$ at the front! So we define a rescaled action of the elements $u_p^{i,j}= \left(\begin{smallmatrix}
	p^i & 0 \\ 0 & p^j
\end{smallmatrix}\right)$: $u_p^{i,j}f = f(p^{i-j}x)$. This also makes sense when $k_1 < k_2$. Note that when $(k_1,k_2)= (k-2,0)$, the action of our favourite element $u_p = u_p^{1,0}$ didn't get rescaled at all. Collecting everything together, we have defined an action on $\cA_{k_1,k_2}^v$ of the monoid $\Delta^+$ generated by $\Iw_p$ and the elements $u_p^{i,j}$ with $i \ge j$. The action of $u_p^{i,j}$ is induced by (right) conjugation on $\overline{N}_1$.

With all this done, we can define an action of $U_p$ with the double coset operator $U_p = [Ku_pK]$ on $S^{\dagger,v}_{k_1,k_2}(K)$. 

\begin{lem}\label{lem:Upcptquat}
	$U_p$ is a compact operator, and is norm-decreasing.
\end{lem}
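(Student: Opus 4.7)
The plan is to unwind the double coset operator $U_p = [K u_p K]$ and reduce compactness of $U_p$ to compactness of $u_p$ acting on the value space $\cA^v_{k_1,k_2}$. The first step is a standard coset computation using the Iwahori decomposition, which gives $\Iw_p u_p \Iw_p = \coprod_{j=0}^{p-1} \gamma_j \Iw_p$ with $\gamma_j = \begin{pmatrix} p & j \\ 0 & 1 \end{pmatrix}$. Factorising $\gamma_j = n_j u_p$ where $n_j = \begin{pmatrix} 1 & j \\ 0 & 1 \end{pmatrix} \in \Iw_p$, this yields
\[(U_p f)(g) = \sum_{j=0}^{p-1} n_j \cdot u_p \cdot f(g\gamma_j),\]
in which $n_j$ and $u_p$ on the right denote the corresponding operators on the value space.

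The crux will be to prove: (i) $u_p$ acts on $\cA^v_{k_1,k_2}$ as a compact operator of norm $\le 1$, and (ii) each element of $\Iw_p$ acts isometrically on $\cA^v_{k_1,k_2}$. For (i) I would use the identification $\cA^v_{k_1,k_2} \cong \cC^v_{\mathrm{an}}(\overline{N}_1,\Qp)$ and the rescaled formula $(u_p f)(x) = f(px)$. Expanding $f$ as a power series $f_b(z)$ on each disc $p(b+p^v\Zp)$ of $\mathbb{B}_v$ and tracking how multiplication by $p$ re-indexes these discs, one checks that $u_p f$ restricted to a disc $p(a+p^{v-1}\Zp)$ of $\mathbb{B}_{v-1}$ coincides with $f_{pa \bmod p^v}$; this bounds the Gauss norm by $\|f\|_v$ and shows that, for $v \ge 1$, $u_p$ factors through the subspace $\cC^{v-1}_{\mathrm{an}}(\overline{N}_1,\Qp)$. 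Since the natural inclusion $\cC^{v-1}_{\mathrm{an}} \hookrightarrow \cC^{v}_{\mathrm{an}}$ is the standard compact restriction map (analytic functions on larger discs restricted to unions of smaller discs), $u_p$ is compact. For $v = 0$, the same identification realises $u_p$ directly as the diagonal operator $\sum b_n z^n \mapsto \sum b_n p^n z^n$ on $\Qp\langle z\rangle$, which is compact and norm-decreasing by inspection. For (ii), the Iwahori action is the product of a unit multiplier of Gauss norm $1$ (coming from the automorphy factor $(bx+d)^{k_1-k_2}(ad-bc)^{k_2}$) with substitution by a Möbius map whose derivative lies in $\Zpx$; the latter preserves the radius of every disc in $\mathbb{B}_v$ and hence the Gauss norm on each, so the combined action is an isometry.

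To conclude compactness, each summand $f \mapsto n_j u_p f(g\gamma_j)$ is a composition involving the compact operator $u_p$ on the value space and hence defines a compact operator on $\cL(K,\cA^v_{k_1,k_2})$; a finite sum of compact operators is compact. Equivalently, the argument packages as saying that $U_p$ factors as
\[\cL(K,\cA^v_{k_1,k_2}) \longrightarrow \cL(K,\cA^{v-1}_{k_1,k_2}) \hookrightarrow \cL(K,\cA^v_{k_1,k_2}),\]
with the second arrow the compact inclusion induced by $\cA^{v-1} \hookrightarrow \cA^v$. Norm-decreasingness then follows from the non-archimedean triangle inequality:
\[\|(U_p f)(g)\|_v \le \max_{j}\|n_j u_p f(g\gamma_j)\|_v \le \max_{j}\|f(g\gamma_j)\|_v \le \|f\|,\]
using successively the isometry of each $n_j$ and the norm bound on $u_p$.

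The main obstacle is the geometric content of step (i): one must carefully track how multiplication by $p$ shuffles and shrinks the discs of $\mathbb{B}_v$ partitioning $\overline{N}_1 \cong p\Zp$, and verify that the rescaled action really does improve the radius of analyticity by one step. Once this re-indexing is clarified, the compactness of $u_p$ reduces to the classical non-archimedean principle that restriction of analytic functions to a smaller disc is completely continuous, and the norm-decreasing property reduces to a routine Gauss-norm check.
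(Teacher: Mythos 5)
Correct, and essentially the same approach as the paper: both proofs rest on the observation that multiplication by $p$ improves the radius of analyticity, so that $u_p$ (and hence $U_p$) factors through the compact inclusion of functions with a larger radius of analyticity into functions with a smaller one, i.e.\ through $\cA^{v-1}_{k_1,k_2}\hookrightarrow\cA^{v}_{k_1,k_2}$ or equivalently $S^{\dagger,v}_{k_1,k_2}(K)\to S^{\dagger,v+1}_{k_1,k_2}(K)$. You fill in the details that the paper leaves implicit --- the explicit Iwahori coset decomposition $\Iw_p u_p\Iw_p=\coprod_{j=0}^{p-1}\gamma_j\Iw_p$ with $\gamma_j=n_ju_p$, the isometric action of $\Iw_p$ via unit automorphy factors and M\"obius substitutions with unit derivative, and the Gauss-norm estimate giving the norm-decreasing claim, which the paper's written proof in fact does not address.
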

\begin{proof}
	The map $x \mapsto px$ defines a map $\mathbb{B}_v \to \mathbb{B}_{v+1}$, so the action of $u_p$ on $\cA^v_{k_1,k_2}$ factors as $\cA^v_{k_1,k_2} \to \cA^{v+1}_{k_1,k_2} \xrightarrow{u_p} \cA^v_{k_1,k_2}$ and $\cA^v_{k_1,k_2} \xrightarrow{u_p} \cA^{v-1}_{k_1,k_2} \to \cA^v_{k_1,k_2}$ where the unlabelled maps are embedding functions with one radius of analyticity into the space of functions with a smaller radius of analyticity.
	
	This shows that $U_p$ factors through the inclusion $S^{\dagger,v}_{k_1,k_2}(K) \to S^{\dagger,v+1}_{k_1,k_2}(K)$ and is therefore compact, since the inclusion $\cA^v_{k_1,k_2} \to \cA^{v+1}_{k_1,k_2}$ looks like one of our basic examples of compact maps (the map on affinoid algebras corresponding to inclusion of a closed disc inside a closed disc with bigger radius).
\end{proof}
Collecting together all our Hecke operators, we have defined an action of $\TT^{(\delta_DN)}$ on $S^{\dagger,v}_{k_1,k_2}(K)$.

A very nice feature of this theory is that it is easy to identify classical automorphic forms inside the space of overconvergent automorphic forms:

\begin{defn}
	For $t \in \Z_{\ge 0}$, we define a differential operator $D_t$ on the space $\cC^{v}_{\mathrm{an}}(\overline{N}_1,\Qp)$ by $(D_t f)(x) = \frac{d^tf}{dx^t}(x)$.
\end{defn}
More formally, we differentiate $f$ by restricting to the residue discs where it is analytic, differentiating there, and gluing the derivatives back together to give us a locally analytic function on all of $\overline{N}_1$.

\begin{theorem}\label{thm:bggquat}
	Suppose $k_1 \ge k_2$. There is a $\Iw_p$-equivariant exact sequence, 
	\[0 \to V_{k_1,k_2} \to \cA^0_{k_1,k_2} \xrightarrow{D_{k_1-k_2+1}} \cA^0_{k_2-1,k_1+1}\]
	
	and an induced exact sequence 
	\[0 \to \cL(K,V_{k_1,k_2}) \to \cL(K,\cA^0_{k_1,k_2}) \xrightarrow{D_{k_1-k_2+1}} \cL(K,\cA^0_{k_2-1,k_1+1}).\]
	
	We have $D_{k_1-k_2+1}\circ U_p = p^{k_1-k_2+1}U_p\circ D_{k_1-k_2+1}$.
\end{theorem}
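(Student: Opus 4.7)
The plan is to break the theorem into three independent pieces: the $\Iw_p$-equivariant exact sequence of representations, the derived exact sequence after taking $\cL(K,-)$, and the twisted commutation with $U_p$. The first piece is the main content; the other two are formal consequences.

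For the first sequence, I would use the identification of the exercise: $\cA^0_{k_1,k_2}\cong \cC^0_{\mathrm{an}}(\overline{N}_1,\Qp)$, realized concretely as $\Qp\langle x\rangle$ with the action
\[\left(\left(\begin{smallmatrix} a & b \\ c & d\end{smallmatrix}\right)f\right)(x) = (ad-bc)^{k_2}(bx+d)^{k_1-k_2}f((ax+c)/(bx+d)).\]
Set $n = k_1-k_2\ge 0$ and $m=n+1$. Then $D_m$ is just the $m$-th derivative, so its kernel consists of polynomials of degree $\le n$, which under the restriction map of the exercise is precisely the image of $V_{k_1,k_2}$. The real content is $\Iw_p$-equivariance of $D_m$, for which I would invoke (or quickly prove by induction on $n$) Bol's classical identity: if $h(x)=(bx+d)^n f(u(x))$ with $u(x)=(ax+c)/(bx+d)$, then
\[h^{(n+1)}(x) = (ad-bc)^{n+1}(bx+d)^{-n-2} f^{(n+1)}(u(x)).\]
Multiplying by the scalar $(ad-bc)^{k_2}$ converts the left side into $D_m((\gamma\cdot f))$ and the right side into $(ad-bc)^{k_2+n+1}(bx+d)^{-n-2}f^{(m)}(u(x))$. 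Since $k_2+n+1=k_1+1$, this is exactly the image of $D_m(f)$ under the action of $\gamma$ on $\cA^0_{k_2-1,k_1+1}$, whose transformation factor is $(ad-bc)^{k_1+1}(bx+d)^{(k_2-1)-(k_1+1)}$. Thus $D_m$ is $\Iw_p$-equivariant.

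The induced exact sequence is immediate: by Proposition \ref{prop:cosets} the functor $\cL(K,-)$ is isomorphic to the finite direct sum $\bigoplus_{i=1}^d (-)^{\Gamma_i}$ of invariant functors, hence is left-exact.

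Finally, for the intertwining with $U_p$, the key point is that, thanks to the rescaling in the definition of the $\Delta^+$-action, $u_p$ acts on $\cA^0_{k_1,k_2}$ and on $\cA^0_{k_2-1,k_1+1}$ by the \emph{same} formula $(u_p f)(x)=f(px)$, independent of weight. The chain rule then gives $D_m\circ u_p = p^m\, u_p\circ D_m$ on functions. Writing $K u_p K = \coprod_i g_i K$ with each $g_i$ of the form $u_p\gamma_i$ for some $\gamma_i\in \Iw_p$, and combining the chain-rule identity with the $\Iw_p$-equivariance established above, I get $D_m\circ g_i = p^m\, g_i\circ D_m$ for every $i$. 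Summing over $i$ and unwinding the double coset definition yields $D_m\circ U_p = p^m\, U_p\circ D_m$ on $\cL(K,\cA^0_{k_1,k_2})$, as required. The main obstacle is honestly just Bol's identity and the associated bookkeeping of the weight shift $(k_1,k_2)\mapsto (k_2-1,k_1+1)$ (which is precisely the affine Weyl reflection $s\cdot\lambda$ for $\mathfrak{gl}_2$); once that is in place the rest is formal.
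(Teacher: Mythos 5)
Your proof is correct and follows essentially the same route as the paper, which simply records the kernel identification, notes exactness of $\cL(K,-)$, and leaves the $\Iw_p$- and $U_p$-equivariance as an exercise — an exercise you carry out via Bol's identity and the chain rule, which is the expected method. One small slip: the left-coset representatives of $Ku_pK$ have the form $g_i = \gamma_i u_p$ with $\gamma_i \in \Iw_p$, not $u_p\gamma_i$ (which would all lie in the single coset $u_pK$); this does not affect your argument, since the relation $D_m \circ g_i = p^m\, g_i \circ D_m$ holds for any $g_i$ in the monoid generated by $\Iw_p$ and $u_p$ once both the $\Iw_p$-equivariance and the $u_p$ chain-rule identity are in hand.
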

\begin{proof}
	The kernel of the differential operator is given by polynomial functions of degree $\le k_1-k_2$, which is exactly the subspace $V_{k_1,k_2}$ in $\cA^0_{k_1,k_2}$\footnote{We need $v=0$, otherwise we just get locally polynomial functions, i.e.~a polynomial function on each $p^v\Zp$ coset. Thanks to Peter Gr\"{a}f for pointing this out in the lectures.}. The functor $\cL(K,-)$ is exact (by Proposition \ref{prop:cosets}). Checking the $\Iw_p$ and $U_p$-equivariance properties is an exercise. 
\end{proof}

\begin{remark}The weight $(k_2-1,k_1+1)$ is what you get by apply the `dot' action of the non-trivial Weyl group element to $(k_1,k_2)$, where $w\cdot\lambda = w(\lambda+\rho)-\rho$, with $\rho$ half the sum of the positive roots.\end{remark}

\begin{cor}[`Small slope implies classical']
	
	If $h < k - 1$, the map \[\cL(K,V_{k-2,0})^{\le h} \to \cL(K,\cA^v_{k-2,0})^{\le h}\] is an isomorphism.
\end{cor}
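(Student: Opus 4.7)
The plan is to combine the BGG-type exact sequence of Theorem \ref{thm:bggquat} with a slope analysis, after first reducing to the case $v=0$ where the kernel of $D_{k-1}$ is exactly the classical subspace.

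First I would reduce to $v=0$. Because $u_p$ acts on $\overline{N}_1$ via $x \mapsto px$, the operator $U_p$ factors as $\cL(K,\cA^v_{k-2,0}) \xrightarrow{U_p} \cL(K,\cA^{\max(v-1,0)}_{k-2,0}) \hookrightarrow \cL(K,\cA^v_{k-2,0})$ and also as $\cL(K,\cA^v_{k-2,0}) \hookrightarrow \cL(K,\cA^{v+1}_{k-2,0}) \xrightarrow{U_p} \cL(K,\cA^v_{k-2,0})$, exactly as in the proof of Lemma \ref{lem:Upcptquat}. Applying \cite[Lemma 2.38]{jl-heidelberg} to these two factorizations shows that the characteristic power series of $U_p$ is independent of $v$ and, more importantly, that the natural inclusion induces an isomorphism $\cL(K,\cA^0_{k-2,0})^{\le h} \xrightarrow{\sim} \cL(K,\cA^v_{k-2,0})^{\le h}$ for every $v \ge 0$ (both projectors $e_{\le h}$ are given by the same polynomial expression in $U_p$ and the restriction maps intertwine them). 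So it suffices to prove the statement for $v=0$.

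Next I would apply Theorem \ref{thm:bggquat} with $(k_1,k_2)=(k-2,0)$, giving the exact sequence
\[0 \to \cL(K,V_{k-2,0}) \to \cL(K,\cA^0_{k-2,0}) \xrightarrow{D_{k-1}} \cL(K,\cA^0_{-1,k-1})\]
together with the intertwining relation $D_{k-1} \circ U_p = p^{k-1} U_p \circ D_{k-1}$, equivalently $U_p \circ D_{k-1} = p^{-(k-1)} D_{k-1} \circ U_p$. This shows that $D_{k-1}$ carries the $U_p$-slope $\le h$ part of the source into the $U_p$-slope $\le h-(k-1)$ part of the target. Since the action of every $u_p^{i,j}$ ($i \ge j$) on $\overline{N}_1$ is via $x \mapsto p^{i-j}x$, the operator $U_p$ is norm-decreasing on $\cL(K,\cA^0_{-1,k-1})$ as well (this uses nothing special to the weight $(k-2,0)$, cf. Lemma \ref{lem:Upcptquat}). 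Consequently all slopes of $U_p$ on $\cL(K,\cA^0_{-1,k-1})$ are non-negative, so when $h < k-1$ the slope $\le h-(k-1)$ part vanishes, and we conclude $D_{k-1}$ is zero on the slope $\le h$ part of $\cL(K,\cA^0_{k-2,0})$.

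Finally, applying the exact sequence above to the slope $\le h$ part (slope decompositions being exact by construction, since the connecting maps commute with $U_p$), we obtain
\[\cL(K,V_{k-2,0})^{\le h} = \cL(K,\cA^0_{k-2,0})^{\le h},\]
and combining with the first step yields the desired isomorphism $\cL(K,V_{k-2,0})^{\le h} \xrightarrow{\sim} \cL(K,\cA^v_{k-2,0})^{\le h}$.

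The main subtlety is the first step: ensuring that passage from $\cA^0$ to $\cA^v$ really does not disturb the finite-slope decomposition. Once one takes this on board, the rest of the argument is a clean slope count turning on the two inputs that $U_p$ is norm-decreasing (so no negative slopes can appear anywhere) and that $D_{k-1}$ multiplies $U_p$-eigenvalues by $p^{-(k-1)}$; the hypothesis $h < k-1$ is exactly what is needed to make these two facts incompatible except when $D_{k-1}$ kills the slope $\le h$ subspace.
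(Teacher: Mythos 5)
Your proof is correct and follows the same strategy as the paper: reduce to $v=0$, use the BGG-type exact sequence from Theorem \ref{thm:bggquat}, observe that the intertwining relation sends the slope $\le h$ part into the slope $\le h-(k-1)$ part of $\cL(K,\cA^0_{-1,k-1})$, and invoke the fact that $U_p$ is norm-decreasing there to kill that space when $h<k-1$. Your first step simply spells out in more detail (via the two factorizations of $U_p$ and Lemma 2.38 of \cite{jl-heidelberg}) the paper's terser remark that the bounded-slope subspaces are independent of $v$ because $U_p$ improves the radius of analyticity.
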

\begin{proof}
	First note that $\cL(K,\cA^v_{k-2,0})^{\le h} = \cL(K,\cA^0_{k-2,0})^{\le h}$, because $U_p$ increases the radius of analyticity and therefore the bounded slope spaces are independent of $v$. We need to show that $\cL(K,\cA^0_{k-2,0})^{\le h}$ is contained in the kernel of $D_{k-1}$. Since $D_{k-1}\circ U_p = p^{k-1}U_p\circ D_{k-1}$, the operator $D_{k-1}$ gives a map:
	\[D_{k-1}: \cL(K,\cA^0_{k-2,0})^{\le h} \to \cL(K,\cA^0_{-1,k-1})^{\le h-(k-1)}.\] But $U_p$ is norm-decreasing on $\cL(K,\cA^0_{-1,k-1})^{\le h-(k-1)}$, so if $h < k-1$ this subspace is zero ($U_p$ cannot have eigenvalues with negative slope). 
\end{proof}

\subsection{Overconvergent automorphic forms in families}
We can now define overconvergent automorphic forms with an affinoid weight
$\kappa: \Zpx \to R^\times$, corresponding to a map from an affinoid adic space $\Spa(R,R^+)=U \to \cW$. 

We are going to define \begin{align*}\cA^v_{\kappa} = \Ind_{B(\Zp)}^{\Iw_p}(1\otimes\kappa)^{v-an} &:= \{f: \Iw_p \to R \mid f(tng) = \kappa(t_2)f(g) \\&\text{for all }t \in T(\Zp), n \in N(\Zp), \text{and }f|_{\overline{N}_1} \in \cC^{v}_{\mathrm{an}}(\overline{N}_1,R)\}.\end{align*}

So we need to define $\cC^{v}_{\mathrm{an}}(\overline{N}_1,R)$: this is simply $\cC^{v}_{\mathrm{an}}(\overline{N}_1,\Qp)\widehat{\otimes}_{\Qp}R$, which we can identify with `$v$-analytic' $R$-valued functions on $\overline{N}_1$, and also with the affinoid algebra of functions on $\mathbb{B}_v\times_{\Spa(\Qp,\Zp)}U$. Fixing a norm $|.|_R$ on the Tate algebra $R$ inducing its topology, we make $\cC^{v}_{\mathrm{an}}(\overline{N}_1,R)$ into a potentially ONable Banach $R$-module. We can define the norm $|f| = \sup_{x \in \overline{N}_1}|f(x)|_R$.

We need to check that $\cA^v_{\kappa}$ is stable under the action of $\Iw_p$. This means that the formula 
\[\left(\left(\begin{smallmatrix}
	a & b \\ c & d
\end{smallmatrix}\right)f\right)(x) = \kappa(bx+d)f((ax+c)/(bx+d))\] must define a function in $\cC^{v}_{\mathrm{an}}(\overline{N}_1,R)$. So we need $x \mapsto \kappa(bx + d)$ to be locally analytic with radius of analyticity $p^{-v}$ for all $b \in \Zp, d \in \Zpx$. For any fixed $\kappa$ this will be true for $v$ sufficiently large (see \cite[Theorem 6.3.4]{eigenbook} again), in which case we say that $\kappa$ is $v$-analytic.

\begin{defn}
	Suppose $\kappa : \Zpx \to R^\times$ is a $v$-analytic affinoid weight. Then the space of $v$-overconvergent automorphic forms of weight $\kappa$ and level $K$ is defined to be:
	$S^{\dagger,v}_{\kappa}(K) := \cL(K,\cA^v_{\kappa})$. 
\end{defn}
Using the isomorphism $\cL(K,\cA^v_{\kappa}) \cong \oplus_{i=1}^d (\cA^v_{\kappa})^{\Gamma_i}$ (which depends on a choice of double coset representatives $(g_i)_{i=1}^d$), we see that $S^{\dagger,v}_{\kappa}(K)$ is a Banach $R$-module with property (Pr). This is because each $\Gamma_i$ is finite, so the $\Gamma_i$-invariants of an $R$-module are a direct summand. We can use the norm $|f| = \sup_{1\le i \le d}|f(g_i)|$.

Just as in the integral weight case, the spaces $S^{\dagger,v}_{\kappa}(K)$ come with actions of the Hecke algebra $\TT^{(\delta_DN)}$, and the action of $U_p$ is compact and norm-decreasing. 

\begin{exercise}
	Use Proposition \ref{prop:cosets} to show that $S^{\dagger,v}_{\kappa}(K)$ is compatible with base change of weights. In other words, whenever there is an affinoid weight $\kappa_U: \Zpx \to \cO(U)^\times$ and a map of affinoids $V \to U$ inducing an affinoid weight $\kappa_V: \Zpx \to \cO(V)^\times$, we have
	$S^{\dagger,v}_{\kappa_U}(K)\widehat{\otimes}_{\cO(U)}\cO(V) \cong S^{\dagger,v}_{\kappa_V}(K)$.
\end{exercise}

\subsection{Eigencurves for definite quaternion algebras}
We now have what we need to construct an eigencurve $\cE^D = \cE^D_{\delta_DN} \xrightarrow{w} \cW$ from the modules $S^{\dagger,v}_{\kappa_U}(K)$ with their Hecke algebra action. It shares all the nice properties of the Coleman--Mazur eigencurve which we discussed in Section \ref{sec: geom props}. In particular, it is reduced and comes with a Zariski dense and self-accumulating subset $Z^D \subset \Max(\cE^D)$ of classical points corresponding to systems of Hecke eigenvalues appearing in $\cL(K,V_{k-2,0})$ for $k \ge 2$.

\subsubsection{Comparing eigenvarieties}\label{comparison}
The comparison between quaternionic eigencurves and the Coleman--Mazur eigencurve was first established by Chenevier \cite{chenevier-JL}.
\begin{thm}
	There is a closed immersion \[\JL: \cE^D_{\delta_DN} \hookrightarrow \cE_{\delta_DN},\] uniquely determined by the property that it maps a classical point in $\cE^D_{\delta_DN}$ (excluding the points coming from functions on $G(\A_f)$ which factor through the reduced norm) to its corresponding point in $\cE_{\delta_DN}$ under the Jacquet--Langlands correspondence (Theorem \ref{thm:JL}).
\end{thm}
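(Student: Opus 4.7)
The plan is to produce $\JL$ via the interpolation (universal) property of the Coleman--Mazur eigencurve \cite[Theorem 5.18]{jl-heidelberg} and then upgrade the resulting morphism to a closed immersion using density of classical points together with classical Jacquet--Langlands.

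Both eigencurves are built from the same tame Hecke algebra $\TT^{(\delta_DN)}$ (generated by $T_l, S_l$ for $l\nmid\delta_DNp$) together with the compact $U_p$-operator on appropriate Banach modules of overconvergent forms. Since the classical Jacquet--Langlands correspondence (Theorem~\ref{thm:JL}) is $\TT^{(\delta_DN)}$- and $U_p$-equivariant, each classical point $x\in Z^D$ attached to an automorphic form not factoring through the reduced norm has a matching classical point in $\cE_{\delta_DN}$ with identical Hecke eigensystem and weight. The interpolation theorem applied to $\cE^D_{\delta_DN}\to\cW$ (which is reduced with Zariski dense classical locus whose eigensystems arise in the Banach modules of overconvergent modular forms) thus produces a unique morphism $\JL:\cE^D_{\delta_DN}\to\cE_{\delta_DN}$ of adic spaces over $\cW$ with the required behaviour on classical points. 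The ``degenerate'' classical points (where $f_x$ factors through the reduced norm) are confined to weight $k=2$, so Zariski density of the non-degenerate ones suffices for uniqueness; and $\JL$ is automatically locally finite, both eigencurves being locally finite over $\cW$.

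The main obstacle is showing $\JL$ is a closed immersion. I would work locally over a small connected affinoid $V\subset\cW$ containing an integral weight $[k_0]$, for a slope bound $h$ satisfying $h<(k_0-1)/2$. Over $V$, the slope-$\le h$ parts of the two eigencurves are represented by finite $\cO(V)$-algebras $A$ (for $\cE_{\delta_DN}$) and $B$ (for $\cE^D_{\delta_DN}$), both flat over $\cO(V)$ by Proposition~\ref{prop: ec equidim}, and $\JL$ corresponds locally to a ring map $A\to B$. For every integral weight $[k]\in V$ with $h<(k-1)/2$, Coleman's classicality (Theorem~\ref{thm coleman-classical}) identifies the fibres $A_{[k]}$ and $B_{[k]}$ with classical Hecke algebras acting on $H^0(X_\Qp,\omega^k)^{\le h}$ and $\cL(K,V_{k-2,0})^{\le h}$ respectively, and the classical JL correspondence realises the latter space as a $\TT^{(\delta_DN)}$-equivariant direct summand of the former, producing a fibrewise surjection $A_{[k]}\twoheadrightarrow B_{[k]}$. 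Since $\cO(V)$ is a Dedekind domain and $B$ is finite flat over $\cO(V)$, this Zariski dense family of fibrewise surjections promotes --- after possibly shrinking $V$ to remove the finite set where $\coker(A\to B)$ could be supported --- to a global surjection $A\twoheadrightarrow B$, which is the desired closed immersion. Covering $\cE^D_{\delta_DN}$ by such neighbourhoods completes the argument.
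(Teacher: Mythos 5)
Your approach departs from the paper's in a way that introduces both redundancy and a genuine gap. The paper simply applies the interpolation theorem \cite[Theorem 5.18]{jl-heidelberg}, which directly produces a \emph{closed immersion} --- that is the whole content of Chenevier/Hansen-style interpolation results. You instead treat it as producing only a morphism, and then spend the bulk of the proof re-establishing closed-immersion-ness, which is both redundant and, as written, incomplete. You also skip a technical step the paper emphasizes: the two eigenvariety data live over \emph{different} spectral curves $\cZ$ and $\cZ^D$, so to make the comparison one must first push the coherent sheaves forward along $\iota,\iota^D$ to view both data over $\AA^1_\cW$. Without that, the interpolation theorem has nothing to compare.

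The genuine gap is in your ``cokernel'' argument. You show $\coker(A\to B)$ vanishes on a Zariski-dense set of integral-weight fibres, conclude its support is a finite set, and propose to shrink $V$ to avoid that set. But if you are trying to cover a point $z\in\cE^D$ lying over a weight $\kappa$ that is \emph{not} one of the ``good'' integral weights $[k]$ with $h<(k-1)/2$ (for instance a non-integral weight, or an integral weight where the slope bound fails), nothing rules out $\kappa$ lying in the support of the cokernel, and then shrinking $V$ to avoid the support also removes the very point $z$ you wanted to cover. So you only obtain a closed immersion over a dense open of $\cE^D$, not everywhere.

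The fix makes your fibrewise analysis superfluous: the map $A\to B$ you construct is a map of $\cO(V)\otimes_{\Qp}\TT^{(\delta_DN)}$-algebras (this follows from the matching of Hecke eigenvalues at a Zariski-dense set of classical points together with reducedness of $\cE^D$), and by construction both $A$ and $B$ are quotients of $\cO(V)\otimes_{\Qp}\TT^{(\delta_DN)}$. Any $\cO(V)\otimes_{\Qp}\TT^{(\delta_DN)}$-algebra map between two such quotients, going in this direction, is automatically surjective, since both sides are generated by the image of $\TT^{(\delta_DN)}$. This is precisely the mechanism inside the interpolation theorem, and once you invoke it there is nothing left to prove; the cokernel bookkeeping and the shrinking of $V$ can be deleted entirely.
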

\begin{proof}
	Our two eigencurves come with finite maps to spectral curves $\cZ, \cZ^D$ which have closed immersions $\iota, \iota^D$ to $\AA^1_{\cW}$, and coherent sheaves $\cN, \cN^D$ with actions of $\TT^{(\delta_DN)}$ built using the spaces of overconvergent modular forms and Riesz theory. In other words, these eigenvarieties are constructed using the eigenvariety data $(\cZ,\cN,\TT^{(\delta_DN)})$ and $(\cZ^D,\cN^D,\TT^{(\delta_DN)})$. To compare them, we can alternatively view the eigenvarieties as being constructed from the data $(\AA^1_{\cW},\iota_*\cN,\TT^{(\delta_DN)})$ and $(\AA^1_{\cW},\iota^D_*\cN^D,\TT^{(\delta_DN)})$. 
	
	A simple interpolation theorem \cite[Theorem 5.18]{jl-heidelberg} (interpolation results like this go back to Chenevier and Hansen), then gives the closed immersion $\JL$. It is unique because $\cE^D_{\delta_DN}$ is reduced and we have specified the map on a Zariski dense set of points.
\end{proof}

It follows from the description of the Jacquet--Langlands correspondence that the image of $\JL$ is a union of irreducible components whose classical points correspond to $\delta_D$-new Hecke eigenforms of level $\Gamma_1(N)\cap\Gamma_0(\delta_DN)$. 

\begin{exercise}
	Can you identify the image under $\JL$ of the points coming from functions on $G(\A_f)$ which factor through the reduced norm?
\end{exercise}

\subsubsection{Etaleness of the weight map at small slope classical points}
As a final topic in this section, let's study the weight map $w: \cE^D \to \cW$ at a classical point $x \in \cE^D$ with $w(x) = [k]$ and $U_p$-eigenvalue $\alpha$ of valuation $< k-1$. The Hecke eigenform corresponding to $x$ has a Nebentypus character $\epsilon: (\Z/N\Z)^\times \to k(x)^\times$, and we assume that $\alpha^2 \ne p^{k-1}\epsilon(p)$. We call these points \emph{regular} classical points\footnote{This is not standard terminology.}.

\begin{thm}
	Let $x \in \cE^D$ be a regular classical point. Assume, for simplicity\footnote{See \cite[Theorem 7.6.4]{eigenbook} for a similar statement without this assumption.}, that the Hecke eigenspace corresponding to $x$ in $\cL(K,V_{k-2,0})\otimes_{\Qp}k(x)$ is one-dimensional. Then the weight map is \'{e}tale at $x$.
\end{thm}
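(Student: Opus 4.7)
The plan is to combine flatness of $w$ (which holds for $\cE^D$ by the analogue of Proposition \ref{prop: ec equidim}) with an explicit analysis of its schematic fibre over $[k]$ at $x$. By the eigenvariety construction I can find an affinoid neighbourhood $w\colon\Spa(R,R^+) \to U \subset \cW$ of $x$ with $R$ realized as the image of $\cO(U) \otimes_{\Qp} \TT^{(\delta_DN)}$ in $\End_{\cO(U)}(S^{\dagger,v}_{\kappa_U}(K)^{\le h})$; shrinking $U$, I may take $v(\alpha) \le h < k-1$. Writing $\m_x \subset R$ and $\m_{[k]} \subset \cO(U)$ for the maximal ideals, flatness reduces the theorem to showing $(R/\m_{[k]}R)_{\m_x} = k(x)$.

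By the base-change property of $S^{\dagger,v}_{\kappa_U}(K)$ together with the quaternionic small-slope-implies-classical corollary, the fibre $R/\m_{[k]}R$ is identified with the image of $\TT^{(\delta_DN)}$ in $\End_{\Qp}(\cL(K,V_{k-2,0})^{\le h})$. The Artin local ring $(R/\m_{[k]}R)_{\m_x}$ acts faithfully on the $\m_x$-primary generalized eigenspace $N$ (inside this module tensored with $k(x)$), so it suffices to prove $\dim_{k(x)} N = 1$.

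The crux is to use regularity to see that $N$ coincides with the $\m_x$-eigenspace, which is one-dimensional by the multiplicity-one hypothesis. Since $T_\ell, S_\ell$ for $\ell \nmid \delta_D p N$ act semisimply on the ambient space by Petersson normality (exactly as in the proof of Proposition \ref{prop:reduced}), $N$ lies in their joint eigenspace $V_\lambda$. By Jacquet--Langlands (Theorem \ref{thm:JL}) together with strong multiplicity one for $\GL_2$, the space $V_\lambda$ is concentrated in a single cuspidal automorphic representation $\pi$, so $U_p$ acts on $V_\lambda$ through its action on $\pi_p^{\Iw_p}$, which is $1$-dimensional when $\pi_p$ is (a twist of) Steinberg and $2$-dimensional when $\pi_p$ is unramified principal series. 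In the latter case $U_p$ has characteristic polynomial $X^2 - a_p X + p^{k-1}\epsilon(p)$ on $V_\lambda$, whose discriminant is non-zero precisely under the regularity hypothesis $\alpha^2 \neq p^{k-1}\epsilon(p)$. Hence $U_p$ is diagonalizable on $V_\lambda$, so $N$ (the generalized $\alpha$-eigenspace) equals the $\alpha$-eigenspace, which is the $\m_x$-eigenspace---one-dimensional by hypothesis.

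The main subtlety is the careful dovetailing of the two hypotheses: regularity is precisely what prevents a $U_p$-Jordan block at $x$ (which would enlarge $N$ beyond the actual eigenspace), while multiplicity one ensures the eigenspace itself is as small as possible. Either hypothesis failing is compatible with genuine ramification of $w$ at $x$, and so the step requiring the most care is the identification of $N$ with the $\m_x$-eigenspace via regularity; everything else is essentially formal flatness plus the classicality theorem.
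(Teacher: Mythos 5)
Your overall strategy is aligned with the paper's (flatness of $w$, slope decomposition in a neighbourhood, classicality, and the one-dimensionality of the relevant classical eigenspace), and your automorphic analysis of why the generalized $U_p$-eigenspace is an honest eigenspace --- via Jacquet--Langlands, strong multiplicity one, and the observation that regularity forces distinct roots of $X^2-a_pX+p^{k-1}\epsilon(p)$ --- is correct and is a nice expansion of the paper's terser invocation of ``the argument in the proof of reducedness.'' However, there is a genuine gap at the step where you assert that $(R/\m_{[k]}R)_{\m_x}$ \emph{acts faithfully} on $N$, equivalently that $R/\m_{[k]}R$ is ``identified with'' the image of $\TT^{(\delta_DN)}$ in $\End_{\Qp}(\cL(K,V_{k-2,0})^{\le h})$.

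That identification is not automatic. The ring $R$ is by construction the image of $\cO(U)\otimes\TT^{(\delta_DN)}$ in $\End_{\cO(U)}(P)$ with $P=S^{\dagger,v}_{\kappa_U}(K)^{\le h}$, so the specialization $R/\m_{[k]}R$ \emph{surjects} onto the classical Hecke algebra in $\End(P/\m_{[k]}P)$, but it need not be isomorphic to it: equality is precisely the statement that $R$ is saturated in $\End_{\cO(U)}(P)$ at $\m_{[k]}$, i.e.\ $R\cap \m_{[k]}\End(P)=\m_{[k]}R$. There is no a priori reason for this (take, schematically, $A=\Z_p$, $P=\Z_p^2$ and $R=\Z_p[T]$ with $T=\mathrm{diag}(p,-p)$: here $R$ is reduced and flat over $A$, but $R/pR\cong\F_p[\overline T]$ with $\overline T^2=0$ maps to $\F_p\subset\End(P/pP)$, not injectively). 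In your argument, the faithfulness you need is in fact \emph{equivalent} to the unramifiedness you are trying to prove, so the step is circular as written. The paper circumvents this by arguing with the \emph{module} rather than the ring: after extending scalars to $L=k(x)$ so that $x$ becomes an $L$-rational point, it isolates the direct summand $P_x$ of $P$ corresponding to the connected component $U_x$; the classicality and eigenspace computation show $P_x\otimes_{\cO(V),[k]}L$ is one-dimensional, whence $P_x$ is locally free of rank one, $\End_{\cO(V)}(P_x)=\cO(V)$, and the inclusions $\cO(V)\subset\cO(U_x)\subset\End_{\cO(V)}(P_x)=\cO(V)$ collapse. This avoids any appeal to faithfulness of the fibre ring: one uses only the faithfulness of $R$ on the full integral module $P$, which does hold by construction. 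To repair your proof, replace the faithfulness-on-$N$ step with this rank-one argument on $P_x$ (and extend scalars to $k(x)$ first to keep the ranks honest when $k(x)\ne\Qp$).
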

\begin{proof}
	Fix our classical point $x$, with $w(x) = [k]$ and $U_p$-slope $h < k-1$. We set $L = k(x)$ and extend scalars of all our adic spaces to $L$, so that we can identify $x$ with a geometric point of $\cE^D_L$. We will show that $w$ is an isomorphism over an open neighbourhood of $x$ in $\cE^D_L$. 
	
	We can find\footnote{See \cite[\S7.6.1]{eigenbook}.} a connected open affinoid neighbourhood $V$ of $[k]$ in $\cW_L$ such that $S^{\dagger,v}_{\kappa_V}(K)$ has a slope $\le h$ decomposition, and if we set \[\cO(U) = \im\left(\cO(V)\otimes_{\Qp}\TT^{(\delta_DN)} \to \End_{\cO(V)}(S^{\dagger,v}_{\kappa_V}(K)^{\le h})\right),\] then the connected component $U_x$ of $U$ containing $x$ satisfies $w^{-1}([k])\cap U_x = \{x\}$. 
	
	There is a direct summand $P$ of $\End_{\cO(V)}(S^{\dagger,v}_{\kappa_V}(K)^{\le h})$ such that \[\O(U_x) = \im\left(\cO(V)\otimes_{\Qp}\TT^{(\delta_DN)} \to \End_{\cO(V)}(P)\right).\]
	
	Since $x$ is the only geometric point of $w^{-1}([k])\cap U_x$, the fibre $P\otimes_{\cO(V),[k]}L$ is equal to the generalized eigenspace corresponding to $x$ in $S^{\dagger,v}_{k-2,0}(K)^{\le h}\otimes_{\Qp} L$. By classicality, this is the same as the generalized eigenspace in $\cL(K,V_{k-2,0})\otimes_{\Qp}L$. As we discussed in the proof of reducedness, this generalized eigenspace is actually an eigenspace (our Hecke operators act semisimply, under the regular assumption). We've assumed this eigenspace is one-dimensional, so  $P\otimes_{\cO(V),[k]}L$ is one-dimensional. Since $P$ is a projective $\cO(V)$-module, we deduce that $P$ is locally free of rank one. This implies that the natural map $\cO(V) \to \End_{\cO(V)}(P)$ is an isomorphism, and therefore the embedding $\cO(U_x) \to \End_{\cO(V)}(P)$ gives an $\cO(V)$-algebra isomorphism $\cO(U_x)\cong \cO(V)$. In other words $w$ induces an isomorphism $w: U_x \cong V$. 
\end{proof}

\section{Overconvergent cohomology}\label{sec:occoh}
In the final section, we'll describe how the above construction for definite quaternion algebras generalizes, following Ash, Stevens \cite{Ash-Stevens} and Hansen \cite{Han17} (a related construction is due to Urban \cite{urban-eigen}). For concreteness, we will fix attention on the group $\GL_n/\Q$, but this can be replaced with a more general reductive group. We fix the upper-triangular Borel subgroup $B \subset \GL_n$, diagonal maximal torus $T$ and Levi decomposition $B = TN$. The Iwahori subgroup $\Iw_p$ is the inverse image of $B(\Fp)$ in $\GL_n(\Zp)$. 

\subsection{Locally symmetric spaces and automorphic local systems.}
Locally symmetric spaces are generalisations of modular curves, and their singular cohomology can be described in terms of automorphic representations. For $K \subset \GL_n(\A_f)$ compact open, define

\[Y_K := \GL_n(\Q)\backslash \GL_n(\A) / \R_{>0}\SO_n(\R)K.\] 

When $K$ is sufficiently small, $\GL_n(\Q)$ acts freely on $\GL_n(\A) / (\R_{>0}\SO_n(\R)K)$. We've already mentioned Borel's result that $\GL_n(\Q)\backslash \GL_n(\A_f) / K$ is finite. If we choose double coset representatives $(g_i)_{i=1}^d$, then we get an isomorphism \begin{align*}\coprod_{i=1}^d \Gamma_i\backslash X^{\GL_n} &\cong Y_K  \\
	\Gamma_i x &\mapsto \GL_n(\Q)(x,g_i)K  \end{align*}
describing $Y_K$ as a disjoint union of arithmetic quotients of the symmetric space $X^{\GL_n} = GL_n(\R) / \R_{>0}\SO_n(\R)$, where $\Gamma_i = g_iKg_i^{-1}\cap\GL_n(\Q)$. 

For a $B$-dominant weight $\lambda$ of $\GL_n$, we have $\lambda(\mathrm{diag}(t_1,\ldots,t_n)) = \prod t_i^{\lambda_i}$ for a decreasing sequence of integers $\lambda_1 \ge \lambda_2 \ge \cdots \ge \lambda_n$. Write $V_{\lambda,\Q}$ for the highest weight $\lambda$ representation of $\GL_n(\Q)$. 

Then we have a local system of $\Q$-vector spaces, $\cV_{\lambda,\Q}$ on $Y_K$ defined by the quotient \[\GL_n(\Q)\backslash \left(V_{\lambda,\Q} \times \GL_n(\A) / \R_{>0}\SO_n(\R)K\right) \] with the diagonal action of $\GL_n(\Q)$.

We can alternatively define a $p$-adic local system $\cV_{\lambda,\Qp}$ using the representation $V_{\lambda,\Qp}$ of $\GL_n(\Qp)$ by taking the quotient \[\left(\left(\GL_n(\Q)\backslash \GL_n(\A) / \R_{>0}\SO_n(\R)K^p\right) \times V_{\lambda,\Q_p}\right)/K_p \] where $K_p$ acts by $(x,v)k = (xk,k^{-1}v)$. 

\begin{exercise}
	Show that the map \[(v,g) \mapsto (g,g_p^{-1}v)\] gives an isomorphism $\cV_{\lambda,\Q}\otimes_{\Q}\Qp \cong \cV_{\lambda,\Qp}$.
\end{exercise}

These cohomology groups can be equipped with Hecke operators using double coset operators, just like in the quaternion algebra case. We fix a set of primes $S$ containing $p$, such that $K_l = \GL_n(\Zl)$ for all $l \notin S$, and assume that $K_p = \Iw_p$.  We will use the abstract Hecke algebra over $\Q$, \[\TT^S_\Q := \left(\prod_{l \notin S} \cH(\GL_n(\Ql),\GL_n(\Zl))\otimes_{\Z}\Q\right)\otimes\Q[U_p]\] containing the spherical Hecke algebras at $l \notin S$ and the operator\footnote{It is more natural to add a bigger, commutative, Hecke algebra at $p$, but we'll fix just this one operator for simplicity.} \[U_p = [\Iw_p \mathrm{diag}(p^{n-1},p^{n-2},\ldots,p,1)\Iw_p].\] Later we will extend scalars to $\Qp$. 

It is a theorem of Franke \cite{franke} that the cohomology groups $H^*(Y_K,\cV_{\lambda,\C})$ can be computed in terms of automorphic representations of $\GL_n(\A)$. The description is quite complicated, but there is a simple description of the subspace of the cohomology groups coming from cuspidal automorphic representations:

\begin{thm}[Borel, Clozel \cite{clo90}, \ldots]\label{clozelcoh}
	There is a subspace \[H^*_{\mathrm{cusp}}(Y_K,\cV_{\lambda,\C}) \subset H^*(Y_K,\cV_{\lambda,\C})\] with a $\TT^S_\Q$-equivariant isomorphism 
	\[H^i_{\mathrm{cusp}}(Y_K,\cV_{\lambda,\C}) \cong \bigoplus_{\pi}\left((\pi^\infty)^K\right)^{\oplus m_{i,\lambda}(\pi_\infty)}.\]
	
	The sum is over cuspidal automorphic representations of $\GL_n(\A)$ and the multiplicities $m_{i,\lambda}(\pi_\infty)$ can only be non-zero if $\pi_\infty$ is \emph{cohomological of weight} $\lambda$. We must then have $\lambda = (\lambda_1 \ge \lambda_2 \ge \cdots \lambda_n)$ with $\lambda_i + \lambda_{n+1-i} = w$ independent of $i$\footnote{This is a consequence of Clozel's `purity lemma'.}. In other words, the dual representation $V^\vee_{\lambda}$ is a twist of $V_{\lambda}$.
	
	Moreover, when the $m_{i,\lambda}(\pi_\infty)$ are non-zero, they are given by the formula:
	
	\begin{align*}
		n = 2m \text{ even} :~ &m_{i,\lambda}(\pi_\infty) = \binom{m-1}{i-m^2}\\
		n = 2m+1 \text{ odd} :~ &m_{i,\lambda}(\pi_\infty) = \binom{m}{i-m(m+1)}
	\end{align*}
\end{thm}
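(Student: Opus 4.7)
The plan is to split the theorem into (i) producing the isotypic decomposition of cuspidal cohomology and (ii) computing the individual multiplicities $m_{i,\lambda}(\pi_\infty)$. I would begin with the Matsushima--Borel--Clozel setup. Via the de Rham theorem applied to the local system $\cV_{\lambda,\C}$, one identifies $H^i(Y_K,\cV_{\lambda,\C})$ with the cohomology of the complex of $V_{\lambda,\C}$-valued differential forms on $Y_K$. Pulling back to $\GL_n(\R)$ and using the identification of invariant forms on $X^{\GL_n}$ with elements of the Chevalley--Eilenberg complex, this becomes relative Lie algebra cohomology $H^i(\mathfrak{g}_\infty, K'_\infty;\cA(\GL_n(\Q)\backslash \GL_n(\A))^K\otimes V_{\lambda,\C})$, where $K'_\infty=\R_{>0}\SO_n(\R)$ and $\cA$ denotes smooth, $K'_\infty$-finite, moderate-growth automorphic forms (Franke's theorem is the input that this full cohomology is computed on the automorphic side).

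Next, I would define $H^*_{\mathrm{cusp}}$ as the contribution of the cuspidal subspace $\cA_{\mathrm{cusp}}\subset\cA$. By Gelfand--Graev--Piatetski-Shapiro, $L^2_{\mathrm{cusp}}$ decomposes discretely as a Hilbert direct sum $\bigoplus_\pi m(\pi)\,\pi$ of irreducible automorphic representations $\pi=\pi_\infty\otimes\pi^\infty$, and strong multiplicity one for $\GL_n$ (Shalika, Piatetski-Shapiro) forces $m(\pi)=1$. Passing to smooth $K'_\infty$-finite vectors is compatible with this decomposition, and since the summands are admissible, relative Lie algebra cohomology distributes over the direct sum. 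Taking $K$-invariants on the finite part yields
\[H^i_{\mathrm{cusp}}(Y_K,\cV_{\lambda,\C})\cong \bigoplus_\pi H^i(\mathfrak{g}_\infty, K'_\infty;\pi_\infty\otimes V_{\lambda,\C})\otimes(\pi^\infty)^K,\]
so $m_{i,\lambda}(\pi_\infty)=\dim H^i(\mathfrak{g}_\infty, K'_\infty;\pi_\infty\otimes V_{\lambda,\C})$. Wigner's lemma then forces the vanishing unless the infinitesimal characters of $\pi_\infty$ and $V^\vee_{\lambda,\C}$ agree; combined with the unitarity of $\pi_\infty$ this is Clozel's purity lemma, producing the symmetry $\lambda_i+\lambda_{n+1-i}=w$.

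Finally, to obtain the binomial formulas, I would invoke the Vogan--Zuckerman classification of cohomological $(\mathfrak{g}_\infty, K'_\infty)$-modules. For $\GL_n(\R)$ the cohomological $\pi_\infty$ of a fixed regular infinitesimal character are, up to sign twists, the cohomologically induced modules $A_{\mathfrak{q}}(\lambda-\rho)$ attached to a specific $\theta$-stable parabolic $\mathfrak{q}\subset\mathfrak{gl}_n(\C)$ whose Levi subalgebra is $\mathfrak{gl}_2^m$ in the even case $n=2m$ and $\mathfrak{gl}_2^m\oplus\mathfrak{gl}_1$ in the odd case. Vogan--Zuckerman then identifies $H^i(\mathfrak{g}_\infty, K'_\infty;A_\mathfrak{q}(\lambda-\rho)\otimes V_{\lambda,\C})$ with $\bigwedge^{i-R}\bigl((\mathfrak{l}\cap\mathfrak{p})/(\mathfrak{l}\cap\mathfrak{k})\bigr)$, where $R=\dim(\mathfrak{u}\cap\mathfrak{k})$ equals $m^2$ (resp.\ $m(m+1)$). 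A direct dimension count of these exterior powers—the Levi's symmetric space contributes dimension $m-1$ in the even case and $m$ in the odd case—yields the binomials in the stated range. The main obstacle is this final step: the Vogan--Zuckerman bookkeeping (choice of $\mathfrak{q}$, computation of $R$, and the combinatorics of $\bigwedge^{\ast}((\mathfrak{l}\cap\mathfrak{p})/(\mathfrak{l}\cap\mathfrak{k}))$) is where the representation-theoretic content of the theorem is concentrated, and it is essential to check that no other cohomological $\pi_\infty$ contribute.
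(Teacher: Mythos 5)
The paper does not give a proof of this theorem; it is cited to Borel and Clozel and left as a black box, so there is nothing internal to compare your argument to. Your plan follows exactly the standard route in the cited literature: identify cohomology with $(\mathfrak{g}_\infty,K'_\infty)$-cohomology of automorphic forms (Franke), restrict to the cuspidal part, use the discrete decomposition of $L^2_{\mathrm{cusp}}$ together with multiplicity one for $\GL_n$, apply Wigner's lemma to get the infinitesimal character constraint and hence Clozel's purity, and finally compute $\dim H^i(\mathfrak{g}_\infty,K'_\infty;\pi_\infty\otimes V_{\lambda,\C})$ via the classification of cohomological unitary representations. That is the right strategy.

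Two points in the last step need repair. First, the degree shift in the Vogan--Zuckerman formula for $A_{\mathfrak{q}}$-cohomology is $R=\dim(\mathfrak{u}\cap\mathfrak{p})$, not $\dim(\mathfrak{u}\cap\mathfrak{k})$; for the relevant $\mathfrak{q}$ these are $m^2$ and $m^2-m$ respectively, so your stated formula for $R$ contradicts the numerical answer you then quote. Second, you conflate the Levi of the $\theta$-stable parabolic with the Levi of the real parabolic. In the $A_{\mathfrak{q}}(\lambda)$ picture the Levi is $\GL_1(\C)^m$ (even $n$) or $\GL_1(\C)^m\times\GL_1(\R)$ (odd $n$); the description via a Levi $\GL_2(\R)^m$ (or $\GL_2(\R)^m\times\GL_1(\R)$) belongs to the alternative, equivalent presentation in which $\pi_\infty$ is realised as a unitary parabolic induction of discrete series and one instead uses the Delorme/Borel--Wallach formula for $(\mathfrak{g},K)$-cohomology of induced modules. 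Either route works, but you should commit to one. Finally, to ``check that no other cohomological $\pi_\infty$ contribute'' you need the genuine input that cuspidal automorphic representations of $\GL_n$ are globally generic (Shalika), so $\pi_\infty$ is generic; a generic unitary cohomological representation of $\GL_n(\R)$ is tempered, and for a fixed regular infinitesimal character the tempered cohomological $\pi_\infty$ (up to the $\pi_0(K'_\infty)$-bookkeeping coming from using $\SO_n$ rather than $O_n$) is unique. Without this, the reduction to the single Levi and the multiplicity formula is not justified.
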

Note that the range of degrees where cohomology can be non-zero is of the form $[q_0,q_0+l_0]$, with $l_0 = \lfloor\frac{n-1}{2}\rfloor$.

It has been shown by Harder and Raghuram \cite[\S5.1]{harder-raghuram} that the cuspidal subspace $H^i_{\mathrm{cusp}}(Y_K,\cV_{\lambda,\C})$ is the extension of scalars to $\C$ of a Hecke-stable $\Q$-subspace $H^i_{\mathrm{cusp}}(Y_K,\cV_{\lambda,\Q}) \subset H^i(Y_K,\cV_{\lambda,\Q})$. 

As in the quaternionic case, we now interpolate the spaces $H^*(Y_K,\cV_{\lambda,\Qp})$ by considering big representations $\cA_{\lambda}^v$ of $\Iw_p$.

\subsection{Locally analytic representations of $\Iw_p$}
We will be brief here, and refer to \cite{Han17} for the details. We write $w_0$ for the longest element in the Weyl group of $\GL_n$, so the weight $w_0\lambda$ is given by $\lambda_n,\lambda_{n-1},\ldots,\lambda_1$. The idea is to define:

\begin{align*}\cA^v_{\lambda} = \Ind_{B(\Zp)}^{\Iw_p}(w_0\lambda)^{v-an} &:= \{f: \Iw_p \to \Qp \mid f(tng) = w_0\lambda(t)f(g) \\&\text{for all }t \in T(\Zp), n \in N(\Zp), \text{and }f|_{\overline{N}_1} \in \cC^{v}_{\mathrm{an}}(\overline{N}_1,\Qp)\},\end{align*} where $\overline{N}_1$ denotes the lower unipotent matrices in $\GL_n(\Zp)$ which map to the identity in $\GL_n(\Fp)$. As a $p$-adic manifold, considering the matrix entries of $\overline{N}_1$ gives an isomorphism to $\Zp^{n(n-1)/2}$, so we can define locally analytic functions on it using polydiscs centred at these points.

We have a natural injective map $V_{\lambda,\Qp} \hookrightarrow \cA_{\lambda}^v$, and we can use $\cA_{\lambda}^v$ to define a local system of Banach spaces on $Y_K$, which again has a natural injective map $\cV_{\lambda,\Qp}\hookrightarrow \cA_{\lambda}^v$.

\begin{exercise}\mbox{}\smallskip\begin{enumerate}
		\item Set $u_p = \mathrm{diag}(p^{n-1},p^{n-2},\ldots,p,1)$ and compute $u_p^{-1}\overline{N}_1u_p$.
		
		\item Show that conjugation by $u_p$ on $\overline{N}_1$ induces a compact map $\cC^{v}_{\mathrm{an}}(\overline{N}_1,\Qp) \to \cC^{v}_{\mathrm{an}}(\overline{N}_1,\Qp)$. 
	\end{enumerate}
\end{exercise}

With some care (\cite[\S2.1]{Han17}) one can show that the spaces $H^i(Y_K,\cA_{\lambda}^v)$ have slope decompositions with respect to the $U_p$ operator (which we rescale as in \ref{ssec:ocquat}, so it is induced by conjugation on $\overline{N}_1$). The idea here is to show that the action of $U_p$ on $H^*(Y_K,\cA_{\lambda}^v)$ lifts to an action of a compact operator $\widetilde{U}_p$ on a complex $\cC^\bullet(Y_K,\cA_{\lambda}^v)$ of potentially ONable $\Qp$-Banach spaces with cohomology $H^*(Y_K,\cA_{\lambda}^v)$. We can then take the slope decomposition on the level of complexes, before passing to cohomology.

There is a classicality theorem:

\begin{thm}(\cite[Theorem 3.2.5]{Han17})
	Suppose $h < \inf_{1\le i \le n-1}(\lambda_i-\lambda_{i+1}+1) $. Then the map $H^*(Y_K,\cV_{\lambda,\Qp})^{\le h} \to H^*(Y_K,\cA_{\lambda}^v)^{\le h}$ is an isomorphism.
\end{thm}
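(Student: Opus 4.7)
The strategy is a direct generalization of the argument for definite quaternion algebras (the Corollary after Theorem \ref{thm:bggquat}): replace the two-term complex $0 \to V_{k_1,k_2} \to \cA^0_{k_1,k_2} \xrightarrow{D_{k_1-k_2+1}} \cA^0_{k_2-1,k_1+1}$ by the locally analytic BGG resolution for $\GL_n$, and then exploit the fact that the rescaled $U_p$ action on each induced representation $\cA^v_\mu$ is norm-decreasing, so has slopes $\ge 0$.

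First I would construct, for each simple reflection $s_i$ (corresponding to the root $\alpha_i$ swapping the $i$th and $(i+1)$st coordinates of $T$), an $\Iw_p$-equivariant differential operator $\Theta_i \col \cA^v_\lambda \to \cA^v_{s_i\cdot\lambda}$, where $s_i\cdot\lambda = s_i(\lambda+\rho)-\rho$ is the dot action. Under the identification of $\cA^v_\lambda$ with locally analytic functions on $\ol{N}_1$, the operator $\Theta_i$ is (a power of) the root vector $E_{-\alpha_i}$ acting to the order $\lambda_i-\lambda_{i+1}+1$. Standard Lie-algebraic BGG for the corresponding Verma modules, together with the locally analytic refinement due to Jones (see \cite[\S2, \S4]{Han17} and the references there), produces an exact sequence of $\Iw_p$-representations
\[
0 \longrightarrow V_{\lambda,\Qp} \longrightarrow \cA^v_\lambda \xrightarrow{\ (\Theta_i)_i\ } \bigoplus_{i=1}^{n-1}\cA^v_{s_i\cdot\lambda} \longrightarrow \cdots
\]
(continuing with higher-length Weyl elements). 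The crucial compatibility, generalizing $D_{k_1-k_2+1}\circ U_p = p^{k_1-k_2+1}U_p\circ D_{k_1-k_2+1}$, is that $\Theta_i$ intertwines the $U_p$-action on the source with $p^{\lambda_i-\lambda_{i+1}+1}$ times the $U_p$-action on the target; this follows from a direct computation of the conjugation action of $u_p = \mathrm{diag}(p^{n-1},\ldots,p,1)$ on $E_{-\alpha_i}$ (the relevant eigenvalue is $p^{-1}$ for each simple root), combined with the order of the differential operator.

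Next I would apply cohomology of the local system associated to each $\cA^v_\mu$ on $Y_K$, passing first to the level of the Banach complexes $\cC^\bullet(Y_K,\cA^v_\mu)$ so that slope decompositions are available as in \cite[\S2]{Han17}. Taking the slope $\le h$ part of the hypercohomology spectral sequence associated to the BGG resolution, every term beyond the first involves $H^*(Y_K,\cA^v_{s_i\cdot\lambda})^{\le h'}$ for some $w$ of length $\ge 1$, and by the intertwining identity the natural map from $H^*(Y_K,\cA^v_\lambda)^{\le h}$ into these terms has image of slope $\le h-(\lambda_i-\lambda_{i+1}+1)$ (with analogous shifts for longer Weyl elements). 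Since the rescaled $U_p$ is norm-decreasing on each $\cA^v_\mu$, the slopes of $U_p$ on $H^*(Y_K,\cA^v_{s_i\cdot\lambda})$ are $\ge 0$. Under the hypothesis $h < \inf_i(\lambda_i-\lambda_{i+1}+1)$ these shifted slopes are strictly negative, so all of these higher terms vanish after taking the $\le h$-slope part. The spectral sequence therefore degenerates and yields $H^*(Y_K,\cV_{\lambda,\Qp})^{\le h}\isomap H^*(Y_K,\cA^v_\lambda)^{\le h}$.

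The main obstacle, and the technical heart of the argument, is the construction of the locally analytic BGG resolution with the correct $U_p$-equivariance at the level of Banach complexes, and the verification that slope decompositions behave well with respect to the resulting maps; in the $\GL_2$ case this is trivial because the resolution has length one and $\cL(K,-)$ is exact, but for $\GL_n$ the resolution has length $\ell_0 = \lfloor (n-1)/2\rfloor$ worth of nonzero contributions to cohomology, and the slope decomposition must be performed consistently on a complex computing $H^*$ rather than only on individual cohomology groups. Once these foundations are in place (as provided in \cite{Han17}), the slope-shift argument is formally identical to the quaternionic case.
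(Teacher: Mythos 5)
Your proposal is correct and follows essentially the same route as the paper's sketch: both proceed via the locally analytic BGG resolution for $\GL_n$, the intertwining identity $\Theta_i\circ U_p = p^{\lambda_i-\lambda_{i+1}+1}U_p\circ\Theta_i$ for the simple-reflection differentials, and the observation that the norm-decreasing normalized $U_p$ forces the shifted-slope pieces to vanish under the hypothesis $h < \inf_i(\lambda_i-\lambda_{i+1}+1)$. You correctly flag that the real technical content (lifting the resolution to the Banach complexes $\cC^\bullet$ and controlling slope decompositions there rather than on individual cohomology groups) is what Hansen's Theorem 3.2.5 supplies, so the paper's treatment is just a sketch of the same argument. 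One small aside to correct: the locally analytic BGG resolution for $\GL_n$ has full length equal to the length $\binom{n}{2}$ of the longest Weyl element, not $l_0 = \lfloor (n-1)/2\rfloor$; the latter is the length of the interval of nonvanishing cuspidal cohomology degrees, which is a separate quantity.
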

The proof involves a generalisation of the exact sequence in Theorem \ref{thm:bggquat} (a `Bernstein--Gelfand--Gelfand resolution'), which starts:
\[0 \to V_{\lambda,\Qp} \to \cA_{\lambda}^v \to \bigoplus_{w\in W:l(w)=1}\cA^v_{w\cdot\lambda}\]
where the sum is over length one elements in the Weyl group of $\GL_n$ (i.e.~the $n-1$ simple reflections $(i~~i+1)$).

\subsection{Overconvergent cohomology in families}
The natural weight space here parameterizes characters of the (integral) $p$-adic torus:
\begin{defn}
	$\cW_n:= \Hom(T(\Zp),\mathbb{G}_{m,\Qp})$. For an affinoid $(R,R^+)$, we have $\cW_n(R,R^+) = \Hom_{\mathrm{cts}}(T(\Zp),R^\times)$. 
	
	The \emph{integral weights} are the $\Qp$ points of $\cW_n$ given by algebraic characters $\lambda: T \to \Gm$.
\end{defn}
We can describe $\cW_n$ explicitly as a disjoint union of $n$-dimensional open polydiscs.

For an affinoid weight $\kappa: T(\Zp) \to R^\times$, and $v$ sufficiently large, it is possible to define a (bounded) complex of potentially ONable $R$-modules $\cC^\bullet(Y_K,\cA_{\kappa}^v)$ of $\Qp$-Banach spaces equipped with a compact action of $\widetilde{U}_p$, lifting the action of $U_p$ on $H^*(Y_K,\cA_{\kappa}^v)$.

\subsection{The eigenvariety}
We can now construct an eigenvariety. First we construct a Fredholm surface, which is highly non-canonical - it depends on choices involved in the construction of the complex $\cC^\bullet(Y_K,\cA_{\kappa}^v)$. These choices can at least be made independent of $\kappa$. So, we have a compact operator $\widetilde{U}_p$ on 
\[\bigoplus_{i} \cC^i(Y_K,\cA_{\kappa}^v)\]
and its characteristic power series which glues over $\cW$ to give a universal characteristic power series $F_{\widetilde{U}_p}^\dagger \in \cO(\cW_n)\{\{X\}\}$ and a Fredholm hypersurface $\cZ:=V(F_{\widetilde{U}_p}^\dagger) \subset \AA^1_{\cW_n}$.

The slope decompositions of the complex $\cC^\bullet(Y_K,\cA_{\kappa}^v)$ define a perfect complex of coherent sheaves $\cN^{\bullet}$ on $\cZ$. Hansen considers the eigenvariety $\cE(H^*(\cN^\bullet))$ built out of the eigenvariety datum $(\cZ,\oplus_i H^i(\cN^{\bullet}),\TT^S)$. We can alternatively construct a (mildly) `derived' eigenvariety $\cE(\cN^\bullet)$ by considering the finite $\cO_{\cZ}$-algebra generated by $\TT^S$ in $\End_{D^b_{\mathrm{Coh}}(\cZ)}(\cN^\bullet)$.

The natural map \[\End_{D^b_{\mathrm{Coh}}(\cZ)}(\cN^\bullet) \to \End_{\mathrm{Coh}(\cZ)}(\oplus_i H^i(\cN))\] has nilpotent kernel, so it induces a closed immersion $\cE(H^*(\cN^\bullet)) \hookrightarrow \cE(\cN^\bullet)$ which is an isomorphism on reduced subspaces.

\subsection{The example $n = 2$}
When $n = 2$, we have essentially re-constructed the Coleman--Mazur eigencurve, in a way which is very close to the construction using overconvergent modular symbols described in \cite{eigenbook}. The Eichler--Shimura theorem and the arguments sketched in \S\ref{comparison} show that if we restrict everything to the locus $\cW \subset \cW_2$ given by weights which are trivial on the second entry in the torus, then the underlying reduced subspace of $\cE(H^*(\cN^\bullet))$ is equal to the Coleman--Mazur eigencurve. 

One subtlety which arises here is that for open affinoids $U \subset \cW$, the pieces of the slope decomposition $H^1(Y_K,\cA^v_{\kappa_U})^{\le h}$ are not necessarily locally free over $\cO(U)$. They have torsion supported at the points $\kappa \in U$ where $H^0(Y_K,\cA^v_{\kappa})^{\le h} \ne 0$. For example, when $\kappa$ is trivial and $h \ge 1$.  

\subsection{Dimensions of eigenvarieties and classical points}
The eigenvarieties for $\GL_n$ with $n > 2$ are rather mysterious. The weight space $\cW_n$ has dimension $n$, but the classical cuspidal points, by Theorem \ref{clozelcoh}, all have weights lying in the \textit{essentially self-dual subspace} $\cW_n^{esd}$ of dimension $1+\lfloor\frac{n}{2}\rfloor$, or codimension $l_0 = \lfloor\frac{n-1}{2}\rfloor$. 

We expect most irreducible components of the eigenvariety for $\GL_n/\Q$ to have dimension $1+\lfloor\frac{n}{2}\rfloor$. This reflects the fact that `generic' systems of mod $p$ Hecke eigenvalues should only appear in the range of cohomological degrees with length $l_0$ described in Theorem \ref{clozelcoh}. See Urban's paper \cite{urban-eigen} for a precise conjecture (related to a conjecture of Hida in the ordinary case) and also Hansen's paper \cite{Han17} for a conjecture and some evidence. The fact that the dimension is $\ge 1+\lfloor\frac{n}{2}\rfloor$ can be verified in many cases, but equality seems to be much harder and could be regarded as a non-Abelian version of the Leopoldt conjecture. 

The classical points of our eigenvarieties now come from Hecke eigenvalues appearing in $H^*(Y_K,\cV_{\lambda,\Qp})$ for integral weights $\lambda$. They are not expected to be Zariski dense in the eigenvariety when $n > 2$. Examples of Ash--Pollack--Stevens \cite{APS-rigid} in the slope $0$ case demonstrate this. 

\begin{conj}[Ash--Pollack--Stevens]
	Suppose an irreducible component of $\cE(\N^\bullet)$ contains a Zariski dense subset of cuspidal classical points. Then all those classical points come from essentially self-dual automorphic representations $\pi$\footnote{I.e.~$\pi^\vee$ is isomorphic to a twist of $\pi$ by a Hecke character.}.
\end{conj}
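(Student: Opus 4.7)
The plan is to combine an upper bound $\dim \cC \leq 1+\lfloor n/2\rfloor$ coming from the density hypothesis with a tangent-space lower bound $\dim T_x\cC > 1+\lfloor n/2\rfloor$ at any putatively non-essentially-self-dual classical point, which would give the desired contradiction. For the upper bound: by Theorem \ref{clozelcoh} (Clozel's purity lemma), every cuspidal classical point has weight in the essentially self-dual subspace $\cW_n^{\mathrm{esd}} \subset \cW_n$ of dimension $1+\lfloor n/2\rfloor$. If the set $Z_{\mathrm{cl}}$ of cuspidal classical points in $\cC$ is Zariski dense, then its image under $w$ is Zariski dense in $w(\cC)$, forcing $w(\cC) \subset \cW_n^{\mathrm{esd}}$. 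Since $\cE(\cN^\bullet)$ is finite over its spectral variety, which in turn is locally quasi-finite over $\cW_n$, the weight map is locally quasi-finite, yielding $\dim \cC = \dim w(\cC) \leq 1+\lfloor n/2\rfloor$.

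For the lower bound at $x \in Z_{\mathrm{cl}}$ coming from a cuspidal $\pi$, I would follow the trianguline-deformation framework of Kisin, Bella\"{i}che--Chenevier, and Hansen. After extending the pseudocharacter of Proposition \ref{prop:pseudocharacter} to the $\GL_n$-eigenvariety and lifting it on the formal neighbourhood of $x$ to a representation $\rho^{\mathrm{univ}} : G_{\Q,Np} \to \GL_n(\cO_{\cC,x})$, one identifies
\[ T_x\cC \hookrightarrow H^1_{\mathrm{tri}}\bigl(G_{\Q,Np},\, \mathrm{ad}\,\rho_\pi\bigr), \]
a trianguline Selmer group cut out by the requirement that the deformation preserve the refinement at $p$ determined by the $U_p$-eigenvalue at $x$. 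Under the expected vanishing $H^1_f(G_{\Q,Np},\mathrm{ad}^0\rho_\pi) = 0$, the dimension of this Selmer group is controlled by the space of local trianguline deformations at $p$: when $\pi$ is essentially self-dual, a symplectic or orthogonal pairing on $\mathrm{ad}\,\rho_\pi$ cuts the $n$ Hodge--Tate--Sen weight directions down to $1+\lfloor n/2\rfloor$, but without that pairing all $n$ weight directions remain free, and $n > 1+\lfloor n/2\rfloor$ for $n \geq 3$ (the case $n=2$ being vacuous since all cuspidal automorphic representations of $\GL_2/\Q$ are essentially self-dual).

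The principal obstacle is the arithmetic input: the conjectural vanishing of the Bloch--Kato Selmer group $H^1_f(G_{\Q,Np},\,\mathrm{ad}^0\rho_\pi)$ is a non-abelian Leopoldt-type statement, open in the generality required and arguably of comparable depth to the full Bloch--Kato conjecture for $\mathrm{ad}^0$. A secondary but still substantial obstacle is the construction of a sufficiently rigid family of Galois representations over a general $\GL_n$-eigenvariety --- existing constructions via unitary group Shimura varieties and patching (Chenevier \cite{Che11}, Bella\"{i}che--Chenevier) are essentially confined to the essentially self-dual setting, which is precisely the setting the argument is trying to rule out. Both obstructions dissolve when $\pi$ is essentially self-dual, which is the very locus the conjecture asserts the classical points must inhabit; this structural feature is likely why the conjecture has proven resistant even in the first interesting case $n=3$, where the numerical evidence of Ash--Pollack--Stevens \cite{APS-rigid} is consistent with the predicted rigidity.
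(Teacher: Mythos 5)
The statement you are addressing is labelled a \emph{Conjecture} in the paper and is attributed to Ash--Pollack--Stevens; the paper offers no proof and explicitly flags the underlying dimension question as ``a non-Abelian version of the Leopoldt conjecture'' and refers to the numerical evidence in \cite{APS-rigid}. So there is no ``paper's own proof'' to compare against, and your submission is correctly framed as a strategy sketch rather than a proof. You are candid about the two principal missing ingredients --- the conjectural vanishing of $H^1_f(G_{\Q,Np},\mathrm{ad}^0\rho_\pi)$ and the absence of a $p$-adic family of genuine Galois representations (not just pseudocharacters) over the $\GL_n$-eigenvariety away from the essentially self-dual locus --- and both of these are exactly the obstructions that keep this a conjecture. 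So your diagnosis of the difficulty is sound.

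One additional gap in the logical skeleton is worth naming: the ``contradiction'' you propose between the upper bound $\dim\cC \le 1 + \lfloor n/2\rfloor$ and a tangent-space lower bound $\dim T_x\cC > 1+\lfloor n/2\rfloor$ is not automatically a contradiction. At a singular point, the Zariski tangent space can strictly exceed the Krull dimension of the local ring, so $\dim T_x\cC$ being large only tells you $x$ is singular, not that $\cC$ is impossibly large. To run the argument you would need either a Krull-dimension lower bound for $\cO_{\cC,x}$ (typically obtained by showing the eigenvariety dominates a suitable trianguline deformation space, which is itself part of what is unproven here), or a separate argument that the relevant classical point $x$ is a smooth point of $\cC$. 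In the essentially self-dual setting one can get such control via unitary/symplectic Shimura varieties and patching, but --- as you observe --- those tools are confined to precisely the locus the argument is trying to exclude. Absent both the Selmer vanishing and the smoothness (or Krull-dimension) input, the argument does not close.
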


Conversely, the essentially self-dual classical points do live (and are Zariski-dense) in closed subsets of dimension $1+\lfloor\frac{n}{2}\rfloor$ \cite{xiang-selfdual}. One can think of these subsets as arising from eigenvarieties for symplectic or orthogonal similitude groups. 

The remaining irreducible components, which are expected to contain the non-essentially-self-dual classical points are `genuinely $p$-adic' in the sense that we can't expect to construct them using classical automorphic representations. This is analogous to the picture when we look at cohomology of locally symmetric spaces with $\Zp$-coefficients --- again the `defect' $l_0$ plays a crucial role, and there are torsion classes which we do not expect to be able to relate to classical automorphic representations.

\renewcommand{\MR}[1]{}
\providecommand{\bysame}{\leavevmode\hbox to3em{\hrulefill}\thinspace}
\providecommand{\MR}{\relax\ifhmode\unskip\space\fi MR }
\providecommand{\MRhref}[2]{%
  \href{http://www.ams.org/mathscinet-getitem?mr=#1}{#2}
}
\providecommand{\href}[2]{#2}

\end{document}